\begin{document}


\setcounter{secnumdepth}{3}

\setcounter{tocdepth}{2}

\title[arXiv]{\textbf{On Bott-Chern and Aeppli cohomologies of almost complex manifolds and related spaces of harmonic forms}
}

\author[Lorenzo Sillari]{Lorenzo Sillari}

\address{Lorenzo Sillari: Scuola Internazionale Superiore di Studi Avanzati (SISSA), Via Bonomea 265, 34136 Trieste, Italy.} 
\email{lsillari@sissa.it}

\author[Adriano Tomassini]{Adriano Tomassini}

\address{Adriano Tomassini: Dipartimento di Scienze Matematiche, Fisiche e Informatiche, Unità di Matematica e
Informatica, Università degli Studi di Parma, Parco Area delle Scienze 53/A, 43124, Parma, Italy}
\email{adriano.tomassini@unipr.it}

\maketitle

\begin{abstract} 
\noindent \textsc{Abstract}. In this paper we introduce several new cohomologies of almost complex manifolds, among which stands a generalization of Bott-Chern and Aeppli cohomologies defined using the operators $d$, $d^c$. We explain how they are connected to already existing cohomologies of almost complex manifolds and we study the spaces of harmonic forms associated to $d$, $d^c$, showing their relation with Bott-Chern and Aeppli cohomologies and to other well-studied spaces of harmonic forms. Notably, Bott-Chern cohomology of $1$-forms is finite-dimensional on compact manifolds and provides an almost complex invariant $h^1_{d + d^c}$ that distinguishes between almost complex structures. On almost K\"ahler $4$-manifolds, the spaces of harmonic forms we consider are particularly well-behaved and are linked to harmonic forms considered by Tseng and Yau in the study of symplectic cohomology.

\end{abstract}

\blfootnote{  \hspace{-0.55cm} 
{\scriptsize 2020 \textit{Mathematics Subject Classification}. Primary: 32Q60, 58A14; Secondary: 53C15, 58J05. \\ 
\textit{Keywords:} Aeppli cohomology, almost complex manifolds, almost K\"ahler $4$-manifolds, Bott-Chern cohomology, Hodge theory, elliptic operators, invariants of almost complex structures.\\

\noindent The authors are partially supported by GNSAGA of INdAM. The second author is partially
supported by the Project PRIN 2017 “Real and Complex Manifolds: Topology, Geometry and
holomorphic dynamics”.}}

\section{Introduction}\label{sec:intro}

Let $M$ be a compact complex manifold. The exterior derivative decomposes on the space of $(p,q)$-forms on $M$ as $d=\partial+\bar\partial$ and the operator $\bar\partial$ satisfies $\bar\partial^2=0$. Then the {\em Dolbeault cohomology groups $H_{\bar\partial}^{\bullet,\bullet}(M)$} of the complex manifold $M$ are defined as the cohomology groups associated to the operator $\bar\partial$. It turns out that $H_{\bar\partial}^{\bullet,\bullet}(M)$ are finite dimensional complex vector spaces and consequently their dimensions $h_{\bar\partial}^{\bullet,\bullet}$ provide natural complex invariants of $M$. Other natural complex cohomologies are the {\em Bott-Chern} and {\em Aeppli} cohomology groups \cite{BC65, Aep65}.
They have been widely studied on complex manifolds (see, for instance, \cite{Sch07, LUV14, TY14, ADT16, AK17, PSU21, Big69, Big70}) and they are defined starting from the double complex $(A^{\bullet, \bullet}, d, d^c)$ (or, equivalently, $(A^{\bullet, \bullet}, \partial, \bar \partial)$) as
\[
H^{p,q}_{BC} = \frac{\ker d \cap \ker d^c \cap A^{p,q}}{\Ima dd^c \cap A^{p,q}} \quad \text{and} \quad H^{p,q}_{A} = \frac{\ker d d^c \cap A^{p,q}}{(\Ima d + \Ima d^c) \cap A^{p,q}}.
\]
On the one hand, they are related to complex de Rham cohomology, $d^c$-cohomology, and Dolbeault cohomology. On the other hand, if we fix a Riemannian metric, the spaces of harmonic forms associated to Bott-Chern and Aeppli Laplacians are isomorphic to the corresponding cohomology by Hodge theory \cite{Sch07}. In particular, Bott-Chern and Aeppli cohomologies are finite dimensional and they can be used to detect $dd^c$-manifolds \cite{AT13}.
\vspace{.2cm}

On compact symplectic manifolds, Tseng and Yau (\cite{TY12a, TY12b}, see also \cite{TTY16}) introduced a symplectic counterpart of Bott-Chern and Aeppli cohomologies defined starting from the double complex $(A^\bullet, d, d^\Lambda)$ as 
\[
H^{k}_{d + d^\Lambda} = \frac{\ker d \cap \ker d^\Lambda \cap A^{k}}{\Ima dd^\Lambda \cap A^{k}} \quad \text{and} \quad H^{k}_{dd^\Lambda} = \frac{\ker d d^\Lambda \cap A^{k}}{(\Ima d + \Ima d^\Lambda) \cap A^{k}}.
\]
These cohomologies admit a decomposition in terms of primitive forms and Hodge theory allows to deduce that they are finite dimensional, in complete analogy with the complex case. An equivalent definition can be given in terms of the operators $\partial_+$, $\partial_-$, that should be thought as a generalization of $\partial$, $\bar \partial$ to symplectic manifolds \cite{TY12b}.
\vspace{.2cm}

On almost complex manifolds, the usual definition of Bott-Chern and Aeppli cohomologies does not extend to any of the most studied operators (e.g., $d$, $d^c$, $\partial$, $\bar \partial$, $\mu$, $\bar \mu$, $\delta$, $\bar \delta$) since there is no natural double complex to consider. Motivated and inspired by the complex Bott-Chern and Aeppli cohomologies and the symplectic cohomologies of Tseng and Yau, in this paper we give a definition of Bott-Chern and Aeppli cohomologies for almost complex manifolds that is based on $d$, $d^c$ (rather than $\partial$, $\bar \partial$) and is defined on suitable complexes of forms $B^\bullet$ and $C^\bullet$ as
 \[
H^k_{d + d^c}:= \frac{\ker (d \colon B^k \rightarrow B^{k+1}) \cap \ker (d^c \colon B^k \rightarrow B^{k+1})}{ \Ima (d d^c \colon B^{k-2} \rightarrow B^k)},
\]
and
\[
H^k_{dd^c} := \frac{\ker (d d^c \colon C^k \rightarrow C^{k+2}) }{ \Ima (d \colon C^{k-1} \rightarrow C^k) + \Ima (d^c \colon C^{k-1} \rightarrow C^k)}.
\]

It turns out that in the definition of our cohomologies, we could have equivalently considered the operators $\delta$, $\bar \delta$ \cite{BT01, TT20}. Therefore, $\delta$, $\bar \delta$ appear to be an appropriate generalization of $\partial$, $\bar \partial$ to almost complex manifolds when studying Bott-Chern and Aeppli cohomologies. 

Bott-Chern and Aeppli cohomologies are naturally related to other cohomologies of almost complex manifolds. First of all, they have a strong connection with de Rham and $d^c$-cohomologies. Moreover, the $J$-invariant and $J$-anti-invariant cohomologies introduced by Li and Zhang (\cite{LZ09}, see also \cite{DLZ10, DLZ13, AT11}) can be generalized to \emph{$J$-even} and \emph{$J$-odd cohomologies} that naturally induce a $\Z_2$-splitting of Bott-Chern cohomology.
\vspace{.2cm}

Hodge theory for almost complex manifolds has been developed studying several Laplacians related to complex operators (among the others, see \cite{CW20b, TT20, PTa22, PT22, HZ22}), including $\delta$, $\bar \delta$. However, this is the first time in the literature that one considers $d$, $d^c$. Thus, we proceed to study Hodge theory of the operators $d$, $d^c$ and to understand to which extent it can be used to describe Bott-Chern and Aeppli cohomologies. Even though Bott-Chern and Aeppli cohomologies can be defined either in terms of $d$, $d^c$ or $\delta$, $\bar \delta$, when studying harmonic forms the two approaches are not equivalent (see \cite{TT20} for the second approach).\\
In the almost complex case, there are four natural Bott-Chern and Aeppli Laplacians arising from $d$, $d^c$ to consider, whose associated spaces of harmonic forms are finite-dimensional and isomorphic to each other. In general, such spaces of harmonic forms are not isomorphic to Bott-Chern and Aeppli cohomologies, that might be infinite dimensional even on compact manifolds. Nevertheless, focusing on $1$-forms and $(2m-1)$-forms, there is an isomorphism between harmonic forms and the cohomologies. As a consequence, $H^1_{d + d^c}$ is isomorphic to $H^{2m-1}_{d d^c}$ and it is a finite-dimensional vector space whose dimension $h^1_{d + d^c}$ provides an invariant of almost complex manifolds. The number $h^1_{d + d^c}$ allows to distinguish between almost complex structures even when other invariants fail in doing so. More precisely, it distinguishes between almost complex structures whose Nijenhuis tensors have the same rank and between almost K\"ahler structures compatible with the same symplectic form.

On compact almost K\"ahler $4$-manifolds, our spaces of harmonic forms have a particularly nice behaviour and show a close relation with harmonic forms built by Tseng and Yau using the operators $d$, $d^\Lambda$.
\vspace{.2cm}

Our definition of Bott-Chern and Aeppli cohomologies should be compared with a related object recently introduced by Coelho, Placini and Stelzig \cite{CPS22}. The two objects are substantially different. For instance, we lose the $(\Z \times \Z)$-grading on the cohomology and therefore also the connection with Dolbeault cohomology of almost complex manifolds defined by Cirici and Wilson (\cite{CW21}, see also \cite{CW22}). In exchange, we gain a strong link with other almost complex cohomologies, as we explained above.
\vspace{.2cm}

The objects introduced in this paper enlarge the list of cohomologies available for the study of almost complex manifolds and underline connections among them (see also \cite{Mus86, Mus21, ST21, CGGH21, CGG22}).
\vspace{.2cm}

The paper is organized as follows: in section \ref{sec:prel} we fix the notation and we recall some preliminary facts.\\
Section \ref{sec:cohom:interpretation} is devoted to the study of almost complex cohomologies: in subsection \ref{sec:BCA:cohom} we give the definition of Bott-Chern and Aeppli cohomologies for the operators $d$, $d^c$ (Definition \ref{def:BCA:cohomology}). In analogy with the complex case, we prove that Aeppli cohomology is a module over Bott-Chern cohomology (Proposition \ref{prop:module}), that there are maps linking them to de Rham and $d^c$-cohomologies (Proposition \ref{prop:diagram}) and that they are a well-defined invariant of almost complex manifolds (Theorem \ref{thm:acinvariants}). In subsection \ref{sec:delta:cohom}, we study the cohomologies of the operators $\delta$, $\bar \delta$. We define their cohomology and prove that their Bott-Chern and Aeppli cohomologies are isomorphic to the ones defined for $d$, $d^c$ (Proposition \ref{prop:BCA:equivalence}). In subsection \ref{sec:even:odd} we study $J$-even and $J$-odd cohomologies and prove that Bott-Chern cohomology admits a natural splitting according to the parity of forms (Theorem \ref{thm:BC:splitting} and Corollary \ref{cor:BC:splitting}).\\
Section \ref{sec:hodge:theory} contains Hodge Theory for $d$, $d^c$: in subsection \ref{sec:BCA:laplacians} we study the Laplacian $\Delta_{d + d^c}$ and the spaces of $(d + d^c)$-harmonic forms $\H^k_{d + d^c}$. On compact manifolds, the dimensions of such spaces $h^k_{d + d^c}$ provide an almost Hermitian invariant (Theorem \ref{thm:ah:invariants}). In subsection \ref{sec:ac:hodge:numbers} and \ref{sec:cohom:iso} we study the relation between $H^k_{d + d^c}$ and $\H^k_{d + d^c}$ and to which extent the numbers $h^k_{d + d^c}$ are metric independent. It turns out that $H^1_{d + d^c}$ is isomorphic to $\H^1_{d + d^c}$ (Proposition \ref{prop:BC:harmonic:inclusion}). Therefore, the number $h^1_{d + d^c}$ is an almost complex invariant that allows to distinguish between almost complex structures whose Nijenhuis tensors have the same rank (Proposition \ref{prop:h1:same:rank}). In subsection \ref{sec:delta:harmonic} we underline the relations between harmonic forms for $d$, $d^c$ and $\delta$, $\bar \delta$.\\
In section \ref{sec:ak:4manifolds} we focus on compact almost K\"ahler $4$-manifolds. We prove a decomposition for the space of $(d + d^c)$ harmonic $2$-forms (Theorem \ref{thm:decomposition}) and we show that the space of $(d + d^c)$-harmonic $3$-forms coincides with the space of $(d + d^\Lambda)$-harmonic $3$-forms considered by Tseng and Yau (Theorem \ref{thm:h3}). The number $h^1_{d +d^c}$ allows to distinguish between almost complex structures compatible with the same symplectic form (Proposition \ref{prop:symplectic}).\\
Finally, section \ref{sec:examples} contains several examples of computations of Bott-Chern cohomology and of the spaces of $(d + d^c)$-harmonic forms.
\vspace{.2cm}

This paper is part of the ongoing PhD project of the first author, focused on the study of invariants of almost complex and almost symplectic manifolds. The study of the objects introduced in this paper continues in several works in preparation.

\subsection*{Acknowledgments.} The authors kindly thank Tom Holt and Riccardo Piovani for useful discussions. They are also grateful to Jonas Stelzig for useful comments and remarks that improved the presentation and for pointing out a missing assumption in theorem \ref{thm:ah:invariants}. The first author is grateful to the Department of Mathematical, Physical and Computer Sciences of the University of Parma for its warm hospitality.

\section{Preliminaries}\label{sec:prel}
In this section we fix the notation and we recall definitions and facts that will be used in the rest of the paper.
\vspace{.2cm}

Let $(M,J)$ be an almost complex $2m$-manifold, that is a $2m$-dimensional smooth manifold endowed with a smooth $(1,1)$-tensor field $J$ satisfying $J^2=-\Id$. Denote by $A^k$ the space of smooth, complex $k$-forms on $M$. The almost complex structure $J$ induces a bigrading 
\[
A^k = \bigoplus_{p+q = k} A^{p,q}
\]
on $k$-forms and it acts by duality on $A^{p,q}$ preserving the bigrading. On $(p,q)$-forms, $J^2 = (-1)^{p+q} \Id$. With respect to the bigrading, the differential $d$ decomposes as $d = \mu + \partial + \bar \partial + \bar \mu$. From the equation $d^2=0$, we have the relations
\begin{equation}\label{eq:d2}
\begin{cases}
    \bar \mu^2 = 0, \\
    \bar \mu \bar \partial + \bar \partial \bar \mu =0, \\
    \bar \partial^2 + \bar \mu \partial + \partial \bar \mu =0, \\
    \partial \bar \partial + \bar \partial \partial + \mu \bar \mu + \bar \mu \mu =0,
\end{cases}
\end{equation}
and the conjugate equations. According to the celebrated Theorem of Newlander and Nirenberg, the almost complex structure $J$ is integrable, i.e., it is induced by the structure of a complex manifold, if and only if the Nijenhuis tensor $N_J$ of $J$ defined as
$$
N_J(X,Y) := [JX,JY]-J[JX,Y]-J[X,JY]-[X,Y]
$$
vanishes. The latter fact it turns to be equivalent to say that $\bar \mu = 0$. Consider the operator $d^c := J^{-1} d J$. On a complex manifold, we have that
\[
d^2 =0, \quad (d^c)^2 =0, \quad d d^c + d^c d =0,
\]
and, consequently, $(A^\bullet, d, d^c)$ is a double complex. One can naturally define four cohomologies associated to such a complex, namely the \emph{de Rham cohomology} $H^k_{d}$, the \emph{$d^c$-cohomology}
\[
H^k_{d^c} := \frac{\ker d^c \cap A^k}{\Ima d^c \cap A^k},
\]
the \emph{Bott-Chern cohomology}
\[
H^{p,q}_{BC} := \frac{\ker d \cap \ker d^c \cap A^{p,q}}{\Ima d d^c \cap A^{p,q}},
\]
and the \emph{Aeppli cohomology}
\[
H^{p,q}_{A} := \frac{\ker d d^c \cap A^{p,q}}{(\Ima d + \Ima d^c) \cap A^{p,q}}.
\]
Furthermore, these cohomologies can be arranged in the following diagram

\begin{equation}\label{complex:diagram}
\begin{tikzcd}
                                       & {H^{p,q}_{BC}} \arrow[ld] \arrow[rd] &                           \\
H^{p+q}_d \arrow[rd] \arrow[rr, leftrightarrow, "J"', "\sim"] &                                        & H^{p+q}_{d^c} \arrow[ld] \\
                                       & {H^{p,q}_A}                            &                          
\end{tikzcd}
\tag{D}
\end{equation}
where $J$ provides an isomorphism between de Rham and $d^c$-cohomologies. Using the decomposition $d = \partial + \bar \partial$, one can write the real operators $d$, $d^c$ in terms of the complex operators $\partial$, $\bar \partial$ and vice versa:
\begin{equation}\label{eq:ddc:deldelbar}
d = \partial + \bar \partial, \quad d^c = i (\bar \partial - \partial), \quad \partial = \frac{1}{2} (d + id^c), \quad \bar \partial = \frac{1}{2} (d - id^c).
\end{equation}
Replacing $d$, $d^c$ by $\partial$, $\bar \partial$, it is possible to define Dolbeault cohomology and to obtain an equivalent definition of Bott-Chern and Aeppli cohomologies.

If $J$ is not integrable, there are two natural operators built using $d$ and $d^c$ that in some sense generalize $\partial$ and $\bar \partial$ to the almost complex case, namely
\begin{equation}\label{eq:delta:ddc}
\delta := \frac{1}{2} (d + i d^c) = \partial + \bar \mu \quad \text{and} \quad \bar \delta := \frac{1}{2} (d - i d^c) = \bar \partial + \mu.
\end{equation}
They induce a decomposition of $d$ and $d^c$
\begin{equation}\label{eq:ddc:delta}
d = \delta + \bar \delta, \quad d^c = i( \bar \delta - \delta).
\end{equation}
and, by \eqref{eq:d2}, \eqref{eq:delta:ddc}, and \eqref{eq:ddc:delta}, we have that
\begin{equation}\label{eq:delta2}
\delta^2 = - \bar \delta^2 = \frac{i}{4} (d d^c + d^c d) =  \partial^2 - \bar \partial^2,
\end{equation}
and
\begin{equation}\label{eq:deltabardelta}
    \delta \bar \delta = - \frac{i}{4} (d d^c - d^c d)
\end{equation}
The condition $d d^c + d^c d =0$ is equivalent to integrability of $J$. Thus, when $J$ is not integrable, it is still true that $d^2 =0$ and $(d^c)^2=0$, but $d d^c + d^c d \neq 0$. Similarly, we have that $\delta \bar \delta + \bar \delta \delta =0$, but $\delta^2 \neq 0$, $\bar \delta ^2 \neq 0$. In particular, $(A^\bullet, d, d^c)$ and $(A^\bullet, \delta, \bar \delta)$ are not double complexes and their Bott-Chern and Aeppli cohomologies are not well-defined. Furthermore, since $\delta^2 \neq 0$, the operator $\delta$ lacks an associated cohomology theory.
\vspace{.2cm}

We consider a smooth map between almost complex manifolds compatible with almost complex structures.\\
Let $(M_1,J_1)$ and $(M_2,J_2)$ be two almost complex manifolds. A smooth map $f \colon M_1 \rightarrow M_2$ is said to be \emph{pseudo-holomorphic} if
\begin{equation}\label{eq:J:holomorphic}
    df \circ J_1 = J_2 \circ df.
\end{equation}
\vspace{.2cm}

We now recall some basic facts regarding almost Hermitian manifolds.\\
Let $(M,J)$ be an almost complex manifold and let $g$ be a $J$-compatible metric, i.e., a Riemannian metric on $M$ such that 
\[
g(J \cdot, J \cdot) = g (\cdot, \cdot).
\]
We say that $(M,J,g)$ is an \emph{almost Hermitian manifold}. Consider the $\C$-linear Hodge $*$ operator
\[
* \colon A^{p,q} \longrightarrow A^{m-q,m-p}.
\]
Assume now that $M$ is compact. On a compact almost Hermitian manifold there is a natural Hermitian product induced on $A^k$ by $g$, defined on forms $\alpha$ and $\beta$ by 
\[
\langle \alpha, \beta \rangle := \int_M \alpha \wedge \overline{*\beta}.
\]

For any differential operator $P \colon A^\bullet \rightarrow A^\bullet$, we denote by $P^*$ its adjoint operator, given by $P^* := - * P *$. If we consider separately the operators $d$ and $d^c$, we have the associated \emph{Hodge Laplacians}
\[
\Delta_d = d d^* + d^* d \quad \text{and} \quad \Delta_{d^c} = d^c (d^c)^* + (d^c)^* d^c.
\]
These are $2^\text{nd}$-order, self-adjoint, elliptic operators, whose kernel is finite dimensional and it is isomorphic to de Rham cohomology, resp.\ $d^c$-cohomology, by Hodge theory.\\
If $J$ is also integrable, there are two natural Laplacians arising from the combination of $d$ and $d^c$, namely the \emph{Bott-Chern Laplacian}
\[
\Delta_{BC} = d d^c ( d d^c)^* + ( d d^c)^* d d^c + d^* d^c ( d^* d^c )^* + ( d^* d^c )^* d^* d^c + d^* d + (d^c)^* d^c,
\]
and the \emph{Aeppli Laplacian}
\[
\Delta_{A} = d d^c ( d d^c)^* + ( d d^c)^* d d^c + d (d^c)^* ( d (d^c)^* )^* + ( d (d^c)^* )^* d (d^c)^*  + d d^*  +  d^c (d^c)^*.
\]
These are $4^\text{th}$-order, self-adjoint, elliptic operators, whose kernel is finite dimensional and it is isomorphic to Bott-Chern cohomology, resp.\ Aeppli cohomology. As a consequence, Bott-Chern and Aeppli cohomologies of a compact complex manifold are finite dimensional.
\vspace{.2cm}

Tseng and Yau defined a symplectic counterpart to Bott-Chern and Aeppli cohomologies of complex manifolds \cite{TY12a}, built using the operators $d$, $d^\Lambda$ instead of $d$, $d^c$. We recall here some basic facts that lead to the construction of those cohomologies.\\
Let $(M,\omega)$ be a symplectic manifold. The \emph{Lefschetz operator} $L$ is the operator
\begin{align*}
    L \colon A^k &\longrightarrow A^{k+2}, \\
    \alpha &\longmapsto \omega \wedge \alpha.
\end{align*}
The \emph{dual Lefschetz operator} $\Lambda \colon A^k \rightarrow A^{k-2}$ is defined as contraction by $\omega$. A $k$-forms $\alpha$ is called \emph{primitive} if $\Lambda \alpha =0$. The triple $(L, \Lambda, H:= [ \Lambda , L])$ defines a representation of $\mathfrak{sl} (2, \C)$ acting on $A^\bullet$ that induces the \emph{Lefschetz decomposition} of $\alpha \in A^k$ \cite{Wei58}, namely
\[
\alpha = \sum_{j \ge \max \{ k-m,0 \}}^{} L^j P^{k -2j},
\]
where $P^{k-2j}$ are primitive forms. There is a natural symplectic operator defined as
\[
d^\Lambda := [ d, \Lambda] = d \Lambda - \Lambda d.
\]
Using $d$, $d^\Lambda$, one can build symplectic cohomologies 
\[
H^{k}_{d + d^\Lambda} := \frac{\ker d \cap \ker d^\Lambda \cap A^{k}}{\Ima d d^\Lambda \cap A^{k}}, 
\]
and
\[
H^{k}_{d d^\Lambda} := \frac{\ker d d^\Lambda \cap A^{k}}{(\Ima d + \Ima d^\Lambda) \cap A^{k}}.
\]
Fix an $\omega$-compatible almost complex structure $J$, and let $g$ be the Riemannian metric
\[
g (\cdot, \cdot) = \omega (J \cdot, \cdot).
\]
We say that $(M, J, g, \omega)$ is an \emph{almost K\"ahler manifold}. The compatible triple $(J,g,\omega)$ allows to express $d^\Lambda$ in function of $d$, $J$ and $g$ as
\[
d^\Lambda = (d^c)^*.
\]
If $M$ is compact, one can develop Hodge theory for the \emph{$(d + d^\Lambda)$-Laplacian}
\[
\Delta_{d + d^\Lambda} = d d^\Lambda ( d d^\Lambda)^* + ( d d^\Lambda)^* d d^\Lambda + d^* d^\Lambda ( d^* d^\Lambda )^* + ( d^* d^\Lambda )^* d^* d^\Lambda + d^* d + (d^\lambda)^* d^\Lambda,
\]
and the \emph{$d d^\Lambda$-Laplacian}
\[
\Delta_{d d^\Lambda} = d d^\Lambda ( d d^\Lambda)^* + ( d d^\Lambda)^* d d^\Lambda + d (d^\Lambda)^* ( d (d^\Lambda)^* )^* + ( d (d^\Lambda)^* )^* d (d^\Lambda)^*  + d d^*  +  d^\Lambda (d^\Lambda)^*,
\]
obtaining isomorphisms
\[
H^{k}_{d + d^\Lambda} \cong \H^{k}_{d + d^\Lambda} \quad \text{and} \quad H^{k}_{d d^\Lambda} \cong \H^{k}_{d d^\Lambda},
\]
where $\H^{k}_{d + d^\Lambda}$, resp.\ $\H^{k}_{d d^\Lambda}$, is the space of \emph{$(d + d^\Lambda)$-harmonic forms}, resp.\ \emph{$d d^\Lambda$-harmonic forms}. In particular, $(d + d^\Lambda)$ and $d d^\Lambda$-cohomologies of a compact symplectic manifold are finite dimensional.

\section{Bott-Chern and Aeppli cohomologies of almost complex manifolds}\label{sec:cohom:interpretation}

In this section we introduce three notions of cohomologies for almost complex manifolds. We begin with a definition of Bott-Chern and Aeppli cohomologies given using the operators $d$, $d^c$ and we establish their basic properties. We observe that the spaces of forms where Bott-Chern and Aeppli cohomologies are defined also appear as natural spaces where one can define the cohomology of $\delta$, $\bar \delta$ and the associated Bott-Chern and Aeppli cohomologies. It turns out that Bott-Chern and Aeppli cohomologies for $d$, $d^c$ and for $\delta$, $\bar \delta$ coincide. Finally, we introduce $J$-even and $J$-odd cohomologies that appear as a special case of the cohomologies $H^{S}_J$ \cite{AT11}, and are closely related to $J$-invariant and $J$-anti-invariant cohomologies of Draghici, Li and Zhang \cite{LZ09, DLZ10, DLZ13}. Bott-Chern cohomology naturally admits a $\Z_2$-splitting induced by such cohomologies.

\subsection{Bott-Chern and Aeppli cohomologies of the operators \texorpdfstring{$d$}{}, \texorpdfstring{$d^c$}{}.}\label{sec:BCA:cohom}

On an almost complex manifold $(M,J)$, with non-integrable $J$, we have that $d d^c + d^c d \neq 0$ and $(A^\bullet, d, d^c)$ is no longer a double complex. To define Bott-Chern and Aeppli cohomologies, we introduce a subcomplex and a quotient complex of $A^\bullet$ (cf. \cite{CPS22}) on which $d$ and $d^c$ anti-commute.

\begin{definition}\label{def:double:complex}
Consider the subcomplex of $A^\bullet$ given by
\[
B^\bullet := \{ \alpha \in A^\bullet : (d d^c + d^c d) \alpha =0 \}
\]
and the quotient complex of $A^\bullet$ given by
\[
C^\bullet := \frac{A^\bullet}{\Ima (d d^c + d^c d)}.
\]
\end{definition}

\begin{proposition}\label{prop:double:complex}
The complexes $(B^\bullet, d, d^c)$ and $(C^\bullet, d, d^c)$ are $\Z$-graded double complexes.
\end{proposition}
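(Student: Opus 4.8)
The plan is to verify the three defining conditions of a $\Z$-graded double complex for each of $B^\bullet$ and $C^\bullet$: that $d$ and $d^c$ are well-defined operators on the (sub- or quotient) complex, that they are graded derivations of the appropriate degree, and that $d^2 = 0$, $(d^c)^2 = 0$, and $d d^c + d^c d = 0$ hold on that complex. Since $B^\bullet \subseteq A^\bullet$ and $d^2 = (d^c)^2 = 0$ hold on all of $A^\bullet$, the only genuinely non-trivial point is the anticommutation relation together with the fact that $d$ and $d^c$ preserve (respectively, descend to) the relevant spaces.

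First I would treat $B^\bullet$. The $\Z$-grading is the one inherited from $A^\bullet = \bigoplus_k A^k$, so $B^k := B^\bullet \cap A^k$. By construction, every $\alpha \in B^\bullet$ satisfies $(dd^c + d^c d)\alpha = 0$, so the anticommutation relation holds on $B^\bullet$ tautologically; combined with $d^2 = (d^c)^2 = 0$ this gives all the algebraic relations. What must be checked is that $d$ and $d^c$ map $B^\bullet$ into itself, i.e.\ that $B^\bullet$ is genuinely a subcomplex for both operators. For $\alpha \in B^\bullet$ one computes $(dd^c + d^c d)(d\alpha) = d(d^c d \alpha) = d\big((dd^c + d^c d)\alpha\big) - d(dd^c\alpha) = 0 - 0 = 0$, using $d^2 = 0$ twice; similarly $(dd^c + d^c d)(d^c\alpha) = d^c\big((dd^c + d^c d)\alpha\big) = 0$ using $(d^c)^2 = 0$. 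Hence $dB^k \subseteq B^{k+1}$ and $d^c B^k \subseteq B^{k+1}$, and $(B^\bullet, d, d^c)$ is a $\Z$-graded double complex.

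Next I would treat $C^\bullet = A^\bullet / \Ima(dd^c + d^c d)$. Here the subtlety is dual: the operators are obviously defined on $A^\bullet$, and the issue is whether they descend to the quotient, i.e.\ whether they preserve the submodule $I^\bullet := \Ima(dd^c + d^c d)$. For $d$: given $(dd^c + d^c d)\beta \in I^\bullet$, we have $d\big((dd^c + d^c d)\beta\big) = d d^c d \beta = (dd^c + d^c d)(d\beta) - d^c d d\beta = (dd^c + d^c d)(d\beta) \in I^\bullet$, again using $d^2 = 0$; the computation for $d^c$ is symmetric using $(d^c)^2 = 0$. So $d$ and $d^c$ induce operators on $C^\bullet$, and on the quotient the relation $dd^c + d^c d = 0$ holds precisely because $\Ima(dd^c + d^c d)$ has been quotiented out, while $d^2 = (d^c)^2 = 0$ pass to the quotient automatically. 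The $\Z$-grading on $C^\bullet$ is $C^k = A^k/(I^\bullet \cap A^k)$, which is well-defined since $dd^c + d^c d$ is homogeneous of degree $2$ so $I^\bullet$ is a graded submodule, and $d$, $d^c$ have degree $+1$ with respect to it. This completes the verification for $C^\bullet$.

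The argument is entirely formal, relying only on $d^2 = 0$ and $(d^c)^2 = 0$; there is no real obstacle. The one place to be slightly careful is bookkeeping with the grading — checking that $\Ima(dd^c+d^c d)$ is homogeneous and that the induced differentials on $C^\bullet$ genuinely raise degree by one — but this is immediate from $d^c = J^{-1}dJ$ preserving the total degree and $d$ raising it by one. I would present the four displayed identities above (two for $B^\bullet$, two for $C^\bullet$) as the core of the proof and note that the double-complex relations then follow from $d^2 = (d^c)^2 = 0$ and the defining property of $B^\bullet$, resp.\ of the quotient $C^\bullet$.
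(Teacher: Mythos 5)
Your proof is correct and follows essentially the same route as the paper: a direct check, using only $d^2=0$ and $(d^c)^2=0$, that $d$ and $d^c$ preserve $B^\bullet$ (resp.\ the ideal $\Ima(dd^c+d^cd)$, so they descend to $C^\bullet$), with the anticommutation relation holding by definition of $B^\bullet$ (resp.\ of the quotient). Your treatment of the quotient complex and of the grading is merely more explicit than the paper's ``the proof for $(C^\bullet,d,d^c)$ is similar,'' so there is nothing to correct.
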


\begin{proof}
If $\alpha \in B^k$, then $d \alpha \in B^{k+1}$. Indeed,
\[
(d d^c + d^c d)d \alpha = d d^c d \alpha = - d ( d d^c) \alpha = 0.
\]
Similarly, if $\alpha \in B^k$ then $d^c \alpha \in B^{k+1}$. Finally, we have that $d d^c + d^c d=0$ on $B^\bullet$ by definition. The proof for $(C^\bullet, d, d^c)$ is similar.
\end{proof}

We now define Bott-Chern and Aeppli cohomologies for almost complex manifolds.

\begin{definition}\label{def:BCA:cohomology}
    Let $(M,J)$ be an almost complex manifold. The \emph{Bott-Chern cohomology of $(M,J)$} is the Bott-Chern cohomology of $(B^\bullet, d, d^c)$, i.e.,
    \[
    H^k_{d + d^c} (M,J) := \frac{\ker (d \colon B^k \rightarrow B^{k+1}) \cap \ker (d^c \colon B^k \rightarrow B^{k+1})}{ \Ima (d d^c \colon B^{k-2} \rightarrow B^k)}.
    \]
    The \emph{Aeppli cohomology of $(M,J)$} is the Aeppli cohomology of $(C^\bullet, d, d^c)$, i.e.,
    \[
    H^k_{dd^c} (M,J) := \frac{\ker (d d^c \colon C^k \rightarrow C^{k+2}) }{ \Ima (d \colon C^{k-1} \rightarrow C^k) + \Ima (d^c \colon C^{k-1} \rightarrow C^k)}.
    \]
    If there is no ambiguity, we will simply write $H^k_{d + d^c}$ and $H^k_{dd^c}$, omitting the underlying manifold and the almost complex structure.
\end{definition}

\begin{remark}[Notation]\label{rem:notation}
    We choose to denote Bott-Chern and Aeppli cohomologies of $(M,J)$ by $H^k_{d + d^c}$, resp. $H^k_{d d^c}$, instead of adopting $H^k_{BC}$, resp. $H^k_{A}$ for three reasons: first, in this way we stress the fact that such cohomologies are built out of the operators $d, d^c$ instead of the operators $\partial, \bar \partial$. Second, we avoid confusion with the Bott-Chern and Aeppli cohomologies of almost complex manifolds introduced by Coelho, Placini and Stelzig \cite{CPS22}. Third, we underline the conceptual link with the symplectic cohomologies $H^k_{d + d^\Lambda}$, resp. $H^k_{d d^\Lambda}$, introduced by Tseng and Yau \cite{TY12a}.
\end{remark}

\begin{remark}
In the complex case, $d d^c + d^c d=0$ on all forms, so that $B^\bullet = A^\bullet$, $C^\bullet = A^\bullet$, and we recover the usual definition of Bott-Chern and Aeppli cohomologies.
\end{remark}

In the complex case, Aeppli cohomology is a module over Bott-Chern cohomology. This is true even when $J$ is not integrable.

\begin{proposition}\label{prop:module}
    Aeppli cohomology has the structure of module over Bott-Chern cohomology. In particular, there is a well-defined pairing 
    \begin{align*}
    H^k_{d + d^c} \times H^\ell_{d d^c} &\longrightarrow H^{k+\ell}_{d d^c}, \\
    ( [\alpha]_{d + d^c} , [\gamma]_{d d^c} ) &\longmapsto [ \alpha \wedge \gamma]_{d d^c}.
    \end{align*}
\end{proposition}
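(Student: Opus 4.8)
The plan is to verify directly that the proposed bilinear map on representatives descends to cohomology, then to unwind that statement into the module axioms. First I would check the map is well-defined on the level of spaces of forms: if $\alpha \in B^k$ and $\gamma$ represents a class in $H^\ell_{dd^c}$ — so $\gamma \in A^\ell$ with $dd^c\gamma \in \Ima(dd^c + d^cd)$ — I need $\alpha \wedge \gamma$ to give a well-defined class in $C^{k+\ell}$, i.e.\ that $dd^c(\alpha\wedge\gamma)$ lies in $\Ima(dd^c+d^cd)$. Using $d\alpha = 0$, $d^c\alpha = 0$ (we may take $\alpha$ to be a Bott-Chern representative, hence $d$- and $d^c$-closed) together with the graded Leibniz rule for $d$ and $d^c$, the expression $dd^c(\alpha\wedge\gamma)$ should reduce, modulo signs, to $\pm\,\alpha \wedge dd^c\gamma$ plus correction terms of the form $\pm\,d\alpha\wedge d^c\gamma \pm d^c\alpha \wedge d\gamma$, all of which vanish; one then has to argue that $\alpha \wedge (\text{something in }\Ima(dd^c+d^cd))$ again lies in $\Ima(dd^c + d^cd)$, which follows since $\alpha$ is $d$- and $d^c$-closed and hence commutes past $dd^c + d^cd$ up to sign in the same Leibniz fashion.

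Next I would check independence of the chosen representatives. Replacing $\gamma$ by $\gamma + d\eta + d^c\theta$ changes $\alpha\wedge\gamma$ by $\alpha\wedge d\eta + \alpha \wedge d^c\theta = \pm d(\alpha\wedge\eta) \pm d^c(\alpha\wedge\theta)$, again using $d\alpha = d^c\alpha = 0$, so the change lies in $\Ima d + \Ima d^c$ and the Aeppli class is unchanged. Replacing $\alpha$ by $\alpha + dd^c\beta$ (with $\beta \in B^{k-2}$) changes the product by $dd^c\beta \wedge \gamma = \pm d d^c(\beta\wedge\gamma) \mp d(d^c\beta \wedge \gamma) \pm \ldots$; here one massages terms using $d^c\gamma$, $d\gamma$, $dd^c\gamma$ and the defining conditions on $\gamma$ to exhibit the difference as an element of $\Ima d + \Ima d^c$ in $C^{k+\ell}$. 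This is the step I expect to be the main obstacle: the bookkeeping of Leibniz signs for $d^c = J^{-1}dJ$ on forms of mixed degree, and keeping careful track of which terms are killed by the hypotheses versus which must be absorbed into $\Ima d + \Ima d^c$ or $\Ima(dd^c + d^cd)$. Since $J$ acts with sign $(-1)^{p+q}$ on $(p,q)$-forms, $d^c$ does not satisfy a single clean Leibniz rule degree-by-degree, so one should either work componentwise in the bigrading or invoke the identity $d^c(\alpha\wedge\gamma) = d^c\alpha \wedge \gamma + (-1)^{|\alpha|}\alpha \wedge d^c\gamma$ and justify it carefully.

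Finally, once the pairing $H^k_{d+d^c}\times H^\ell_{dd^c}\to H^{k+\ell}_{dd^c}$ is seen to be well-defined, the module structure is essentially formal: bilinearity over $\C$ is immediate from bilinearity of $\wedge$; associativity and compatibility follow from associativity of $\wedge$ and the fact that $H^\bullet_{d+d^c}$ is itself a ring (the product $[\alpha]_{d+d^c}[\alpha']_{d+d^c} = [\alpha\wedge\alpha']_{d+d^c}$ being well-defined on $B^\bullet$ by the analogous, and easier, check, since $B^\bullet$ is a subcomplex closed under $\wedge$). I would state this ring structure as a preliminary observation and then conclude that the displayed pairing makes $H^\bullet_{dd^c}$ a graded module over the graded ring $H^\bullet_{d+d^c}$, exactly mirroring the complex case.
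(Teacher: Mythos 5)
Your strategy is the same as the paper's: verify directly on representatives, via the graded Leibniz rule, that the wedge descends to a pairing $H^k_{d+d^c}\times H^{\ell}_{dd^c}\to H^{k+\ell}_{dd^c}$, and then observe that the module axioms are formal. The parts you actually carry out are correct, and one worry you raise is unnecessary: $d^c=J^{-1}dJ$ does satisfy the clean rule $d^c(\alpha\wedge\gamma)=d^c\alpha\wedge\gamma+(-1)^{|\alpha|}\alpha\wedge d^c\gamma$, because $J$ extends to forms multiplicatively, so conjugating a graded derivation by $J$ again gives a graded derivation; no bidegree-by-bidegree bookkeeping is needed. With $d\alpha=d^c\alpha=0$ one gets $dd^c(\alpha\wedge\gamma)=\alpha\wedge dd^c\gamma$ and $\alpha\wedge(dd^c+d^cd)\sigma=(dd^c+d^cd)(\alpha\wedge\sigma)$, so the product is $dd^c$-closed in $C^{k+\ell}$, and the independence in the $\gamma$-slot is exactly as easy as you say. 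Your closing remarks are also fine: $B^\bullet$ is indeed closed under $\wedge$, since $dd^c+d^cd=-4i\delta^2$ is an even graded derivation, so $H^\bullet_{d+d^c}$ is a ring and the axioms follow formally once the pairing is well defined.

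The step you flag and leave open, namely that $dd^c\beta\wedge\gamma$ ($\beta\in B^{k-2}$, $[\gamma]$ a $dd^c$-closed class in $C^{\ell}$) dies in Aeppli cohomology, is precisely the crux, and as written your proposal does not contain the idea needed to close it (the paper itself dispatches it in one sentence). A single Leibniz expansion is not enough: $dd^c\beta\wedge\gamma=d(d^c\beta\wedge\gamma)-(-1)^{k-1}d^c\beta\wedge d\gamma$ leaves a term you cannot kill, since $\gamma$ need not be closed. What does work is to use $\beta\in B^{k-2}$, i.e.\ $dd^c\beta=-d^cd\beta$, to expand once with respect to $d$ and once with respect to $d^c$ and take the average; modulo $\Ima d+\Ima d^c$ this leaves $\tfrac12\,\beta\wedge(dd^c-d^cd)\gamma$. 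Since $[\gamma]$ is $dd^c$-closed in $C^\bullet$, one has $(dd^c-d^cd)\gamma=2\,dd^c\gamma-(dd^c+d^cd)\gamma\in\Ima(dd^c+d^cd)$. Finally one needs the small lemma that for $\beta\in B$ and any $\tau$, $\beta\wedge(dd^c+d^cd)\tau\in\Ima d+\Ima d^c$: expanding $d(\beta\wedge d^c\tau)$, $d^c(\beta\wedge d\tau)$, $d(d^c\beta\wedge\tau)$, $d^c(d\beta\wedge\tau)$ and using $(dd^c+d^cd)\beta=0$ cancels all leftover terms. Altogether $dd^c\beta\wedge\gamma\in\Ima d+\Ima d^c$, which is what well-definedness in the first slot requires. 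Note that both hypotheses, $(dd^c+d^cd)\beta=0$ and $dd^c\gamma\equiv0$ in $C^\bullet$, are genuinely used, so the ``massaging'' cannot be done for generic $\beta,\gamma$; a complete write-up must include this computation.
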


\begin{proof}
Let $[ \alpha ]_{d+d^c} \in H^k_{d + d^c}$. Then 
\[
[ \alpha ]_{d+d^c} = \alpha + d d^c \beta,
\]
where $d \alpha =0$, $d^c \alpha =0$, and $\beta \in B^{k-2}$. Pick any $[\gamma]_{d d^c} \in H^\ell_{d d^c}$. Then
\[
[\gamma]_{d d^c} = \gamma + d \eta + d^c \theta,
\]
up to $(d d^c + d^c d)$-exact forms, where $d d^c \gamma = [0]_{C^\bullet}$. To prove the proposition it is enough to check that $[\alpha \wedge \gamma]_{d d^c} \in H^{k+\ell}_{d d^c}$ is well-defined. We have that $d d^c (\alpha \wedge \gamma) = [0]_{C^\bullet}$ since $\alpha$ is $d$-closed and $d^c$-closed, and $\gamma $ is $d d^c$-closed. Moreover, the cohomology class does not depend on the representatives. Indeed, we have that
{\small
\[
(\alpha + d d^c \beta) \wedge (\gamma + d \eta + d^c \theta) = \alpha \wedge \gamma + \alpha \wedge d \eta + \alpha \wedge d^c \theta + dd^c \beta \wedge \gamma + d d^c \beta \wedge d \eta + d d^c \beta \wedge d^c \theta.
\]
}

When we pass to cohomology, the second, third, fifth and sixth terms on the right hand side vanish because they are $d$-exact or $d^c$-exact. The fourth term is both $d$ and $d^c$-exact, since when computing Aeppli cohomology, we are considering classes in $C^\bullet$.
\end{proof}

\begin{proposition}\label{prop:diagram}
    There is a diagram of cohomologies
    
\begin{equation}\label{almost:complex:diagram}
\begin{tikzcd}
                                       & {H^k_{d+ d^c}} \arrow[ld] \arrow[rd] &                           \\
H^k_d \arrow[rd] \arrow[rr, leftrightarrow, "J"', "\sim"] &                                        & H^k_{d^c} \arrow[ld] \\
                                       & {H^k_{d d^c}}                            &                          
\end{tikzcd}
\tag{D$'$}
\end{equation}

that generalizes diagram \eqref{complex:diagram} to the case of almost complex manifolds.
\end{proposition}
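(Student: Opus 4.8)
The plan is to construct the four arrows in diagram \eqref{almost:complex:diagram} explicitly as maps induced by the identity on forms, and then check that each is well-defined, i.e.\ that closed representatives map to closed representatives and that the relevant exact forms are sent into the relevant exact forms. The isomorphism $H^k_d \xrightarrow{\sim} H^k_{d^c}$ is already essentially given: the operator $J$ conjugates $d$ to $d^c$ (since $d^c = J^{-1}dJ$), so $\alpha \mapsto J\alpha$ carries $\ker d$ to $\ker d^c$ and $\Ima d$ to $\Ima d^c$, and it is invertible with inverse $J^{-1}$; I would note that on $k$-forms $J$ is, up to sign, an involution, so this is a genuine isomorphism. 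That takes care of the horizontal arrow.

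Next I would treat the two arrows out of $H^k_{d+d^c}$. A class $[\alpha]_{d+d^c}$ is represented by $\alpha \in B^k$ with $d\alpha = d^c\alpha = 0$, modulo $dd^c B^{k-2}$. To land in $H^k_d$ I send $[\alpha]_{d+d^c} \mapsto [\alpha]_d$; this is well-defined because $d\alpha = 0$ and because $dd^c\beta = -d^cd\beta$ is $d$-exact (using $\beta \in B^{k-2}$, where $d$ and $d^c$ anti-commute, so $dd^c\beta = d(d^c\beta)$ is literally $d$-exact — even simpler). Similarly $[\alpha]_{d+d^c}\mapsto [\alpha]_{d^c}$ is well-defined since $d^c\alpha = 0$ and $dd^c\beta = d^c(-d\beta)$ is $d^c$-exact. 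For the two arrows into $H^k_{dd^c}$: given $[\alpha]_d \in H^k_d$, send it to $[\alpha]_{dd^c}$, the class of the image of $\alpha$ in $C^k = A^k/\Ima(dd^c + d^cd)$. This is well-defined because a $d$-closed form satisfies $dd^c\alpha = -d^cd\alpha = 0$, hence is $dd^c$-closed after passing to $C^\bullet$ (indeed $dd^c\alpha = (dd^c+d^cd)\alpha \in \Ima(dd^c+d^cd)$, so it vanishes in $C^{k+2}$), and because $d\eta$ is manifestly in the denominator $\Ima(d\colon C^{k-1}\to C^k)$. The arrow from $H^k_{d^c}$ is symmetric, using $d^c\alpha = 0 \Rightarrow dd^c\alpha = (dd^c+d^cd)\alpha \equiv 0$ in $C^{k+2}$ and $d^c\theta \in \Ima(d^c\colon C^{k-1}\to C^k)$. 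Finally I would check commutativity of the two triangles: the composite $H^k_{d+d^c}\to H^k_d \to H^k_{dd^c}$ sends $[\alpha]_{d+d^c}\mapsto[\alpha]_{dd^c}$, as does $H^k_{d+d^c}\to H^k_{d^c}\to H^k_{dd^c}$, so both triangles commute on the nose, and the square involving $J$ commutes because $J$ also intertwines the maps into $H^k_{dd^c}$ (one should observe that $\Ima(dd^c+d^cd)$ is $J$-related to itself appropriately, or rather work out that the two composites $H^k_d \to H^k_{dd^c}$ and $H^k_d \xrightarrow{J} H^k_{d^c}\to H^k_{dd^c}$ agree up to the identification — I would phrase the diagram as commuting in the weak sense that the outer square of maps to $H^k_{dd^c}$ agrees).

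The only genuinely delicate point is the passage to the quotient complex $C^\bullet$ in the two bottom arrows: one must be careful that $d$ and $d^c$ are well-defined on $C^\bullet$ (this is Proposition~\ref{prop:double:complex}, already available) and, more subtly, that the statement "$d$-closed $\Rightarrow$ $dd^c$-closed in $C^\bullet$" really uses $dd^c\alpha = dd^c\alpha + d^cd\alpha$ when $d\alpha = 0$, so that $dd^c\alpha \in \Ima(dd^c+d^cd)$ and therefore represents $0$ in $C^{k+2}$. I expect this is the step where the author is most likely to be terse, and it is where I would be most explicit. Everything else is a routine diagram chase using only $d^2 = (d^c)^2 = 0$ (which hold unconditionally), the definition of $B^\bullet$ and $C^\bullet$, and the conjugation identity $d^c = J^{-1}dJ$; no compactness or integrability is needed, consistent with the fact that the diagram is asserted for arbitrary almost complex manifolds.
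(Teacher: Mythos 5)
Your proposal is correct and follows essentially the same route as the paper: all four arrows are induced by the identity on representatives (composed with the projection to $C^\bullet$ for the maps into $H^k_{dd^c}$), with well-definedness checked exactly as you do, via $dd^c\beta=d(d^c\beta)=-d^c(d\beta)$ on $B^{k-2}$ and $dd^c\gamma=(dd^c+d^cd)\gamma\equiv 0$ in $C^\bullet$ for $d$-closed (resp.\ $d^c$-closed) $\gamma$. Your extra remarks on commutativity go beyond what the proposition asserts (the paper only claims existence of the maps), but they do not affect the argument.
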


\begin{proof}
    Let $[ \alpha ]_{d+d^c} \in H^k_{d + d^c}$. Then $[\alpha]_{d + d^c} = \alpha + d d^c \beta$. On the one side, $d \alpha=0$ and $d d^c \beta$ is $d$-exact, so that $\alpha$ defines a de Rham cohomology class. On the other side, $d^c \alpha =0$ and $d d^c \beta = - d^c d \beta$, so it also defines a $d^c$-cohomology class. Thus the morphisms from Bott-Chern cohomology to de Rham and $d^c$-cohomologies are given by the identity on representatives. Let now $ [\gamma]_{d} \in H^k_d$. Then
    \[
    d d^c \gamma= (d d^c + d^c d) \gamma, 
    \]
    that is the zero class in $C^\bullet$, so that $\gamma$ defines an Aeppli cohomology class. Changing representative in the de Rham cohomology class modifies $\gamma$ by a $d$-exact form, preserving the corresponding Aeppli cohomology class. The same proof holds for a $d^c$-cohomology class. Thus the morphisms from de Rham and $d^c$-cohomologies to Aeppli cohomology are given by the identity on representatives composed with the projection on $C^\bullet$.
\end{proof}

\begin{remark}
    The morphisms going from Bott-Chern cohomology to de Rham and $d^c$-cohomologies are not injective nor surjective. The same holds for those going from de Rham and $d^c$-cohomologies to Aeppli cohomology. This is true even if $J$ is integrable (see, for example, \cite{Ang14}).
\end{remark}

Next proposition establishes that Bott-Chern and Aeppli cohomologies are almost complex invariants.

\begin{theorem}\label{thm:acinvariants}
    Let $(M,J)$ and $(M', J')$ be almost complex manifolds and let $f \colon M \rightarrow M'$ be a pseudo-holomorphic map. Then $f$ induces a morphism of differential $\Z$-graded algebras
    \begin{equation*}
        f^* \colon H^\bullet_{d + d^c} (M', J') \longrightarrow H^\bullet_{d + d^c} (M, J),
    \end{equation*}
    and a morphism of differential $\Z$-graded modules over $H^\bullet_{d +d^c}$
    \begin{equation*}
        f^* \colon H^\bullet_{d d^c} (M', J') \longrightarrow H^\bullet_{d d^c} (M, J).
    \end{equation*}
    Furthermore, if $f$ is also a diffeomorphism, then $f^*$ is an isomorphism.
\end{theorem}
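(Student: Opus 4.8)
The plan is to show that a pseudo-holomorphic map $f\colon M \to M'$ pulls back forms in a way that respects the two complexes $B^\bullet$ and $C^\bullet$ of Definition \ref{def:double:complex}, and then invoke functoriality of cohomology. The starting point is that $f^*\colon A^\bullet(M') \to A^\bullet(M)$ is an algebra homomorphism commuting with $d$, and—because $df \circ J = J' \circ df$ implies $f^* \circ d^c_{M'} = d^c_M \circ f^*$ (here one uses that $d^c = J^{-1}dJ$ together with \eqref{eq:J:holomorphic}, checking it on functions and $1$-forms and extending multiplicatively)—also commutes with $d^c$. Hence $f^*$ commutes with the operator $dd^c + d^c d$ as well.

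The key steps, in order, are: (1) Establish $f^* d^c = d^c f^*$ as above. (2) Deduce that $f^*$ maps $B^\bullet(M',J')$ into $B^\bullet(M,J)$: if $(dd^c + d^c d)\alpha = 0$ then $(dd^c + d^c d)f^*\alpha = f^*((dd^c+d^c d)\alpha) = 0$. Likewise $f^*$ descends to the quotients $C^\bullet$, since $f^*$ sends $\Ima(dd^c + d^c d)$ on $M'$ into $\Ima(dd^c + d^c d)$ on $M$, giving a well-defined map $C^\bullet(M',J') \to C^\bullet(M,J)$. (3) Since $f^*$ commutes with $d$ and $d^c$, it carries $\ker d \cap \ker d^c$ to $\ker d \cap \ker d^c$ and $\Ima dd^c$ to $\Ima dd^c$ on $B^\bullet$; hence it induces $f^*\colon H^\bullet_{d+d^c}(M',J') \to H^\bullet_{d+d^c}(M,J)$. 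Similarly on $C^\bullet$ it carries $\ker dd^c$ into $\ker dd^c$ and $\Ima d + \Ima d^c$ into $\Ima d + \Ima d^c$, giving $f^*\colon H^\bullet_{dd^c}(M',J') \to H^\bullet_{dd^c}(M,J)$. (4) The multiplicativity $f^*(\alpha \wedge \beta) = f^*\alpha \wedge f^*\beta$ shows the first map is a morphism of $\Z$-graded algebras and, combined with the pairing of Proposition \ref{prop:module}, that the second is a morphism of $\Z$-graded modules over $f^*$ on Bott–Chern cohomology. (5) If $f$ is a diffeomorphism, then $(f^{-1})^*$ provides a two-sided inverse—it too is pseudo-holomorphic since $df^{-1} \circ J' = J \circ df^{-1}$—so $f^*$ is an isomorphism in both cases.

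The one genuinely substantive point is step (1), the identity $f^* d^c = d^c f^*$; everything after it is formal diagram-chasing. The cleanest way to see it is to note that $J$ acts on forms by duality, so on $A^{p,q}$ one has $f^*$ intertwining the actions of $J'$ and $J$ precisely because $df \circ J = J' \circ df$ dualizes to $J_{M}^* \circ f^* = f^* \circ J_{M'}^*$ on $1$-forms, and this extends to all forms since both $f^*$ and the $J$-actions are algebra homomorphisms up to the bigrading sign $(-1)^{p+q}$. Combined with $f^* d = d f^*$, one gets $f^* d^c = f^* J^{-1} d J = J^{-1} f^* d J = J^{-1} d f^* J = J^{-1} d J f^* = d^c f^*$. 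I would state this as a short lemma or inline computation and then let the rest of the proof proceed by the chase outlined above.
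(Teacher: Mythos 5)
Your proposal is correct and follows essentially the same route as the paper: the paper's proof likewise notes that $f^*$ commutes with $d$ and, by \eqref{eq:J:holomorphic}, with $d^c$, hence preserves the double complexes $(B^\bullet,d,d^c)$ and $(C^\bullet,d,d^c)$ and so induces the maps on cohomology, with $(f^{-1})^*$ giving the inverse in the diffeomorphism case. You simply spell out the intertwining $f^*d^c = d^c f^*$ and the algebra/module compatibility in more detail than the paper does.
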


\begin{proof}
    The pullback $f^*$ commutes with $d$. By \eqref{eq:J:holomorphic}, it also commutes with $d^c$. In particular, $f^*$ sends the double complexes $(B^\bullet, d, d^c)$, $(C^\bullet, d, d^c)$ defined for $(M',J')$ into the same objects defined for $(M,J)$. As a consequence, it defines a morphism on the cohomologies.\\
    If $f$ is also a diffeomorphism, then $M = M'$ and $df$ is an isomorphism. Thus $J = df^{-1} \circ  J' \circ df$ and $f^*$ is an isomorphism with inverse $(f^{-1})^*$.
\end{proof}

\begin{remark}\label{rem:bigraded}
    In principle, one could try to define a bigraded version of Bott-Chern and Aeppli cohomologies for $d$, $d^c$. However, the operator $d d^c$ does not preserve the bigrading and giving such a definition would require to impose further conditions that would make the construction less natural. Nevertheless, if $(p,q) \in \{(1,0), (0,1), (2,0), (0,2) \}$, one can still give a bigraded notion of Bott-Chern cohomology setting
    \[
    H^{p,q}_{d + d^c} := \ker d \cap A^{p,q},
    \]
    since $d$-closed $(p,q)$-forms coincide with $d^c$-closed $(p,q)$-forms, while for $(p,q)=(1,1)$ one can set
    \[
    H^{1,1}_{d + d^c} := \frac{\ker d \cap A^{1,1}}{\Ima ( d d^c \colon \ker (dd^c + d^c d) \cap C^{\infty} (M)  \rightarrow \ker (d d^c + d^c d) \cap A^{1,1})}.
    \]
\end{remark}

Using the bigraded cohomology groups we just defined, we can prove a bigraded splitting in the case of $1$-forms.

\begin{proposition}\label{prop:decomposition:1forms}
    Let $(M,J)$ be an almost complex manifold. Then
    \begin{equation}\label{BC:cohom:decomposition:h1}
        H^1_{d + d^c} = H^{1,0}_{d + d^c} \oplus H^{0,1}_{d + d^c}.
    \end{equation}
\end{proposition}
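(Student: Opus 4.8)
The plan is to decompose an arbitrary class in $H^1_{d+d^c}$ into its $(1,0)$ and $(0,1)$ components and to check that this decomposition is well-defined at the level of cohomology. First I would observe that on a $1$-form there are no $dd^c$-exact $1$-forms coming from $B^{-1}$, so $H^1_{d+d^c}$ is simply the space of $1$-forms $\alpha$ with $d\alpha = 0$ and $d^c\alpha = 0$ (and automatically $\alpha \in B^1$, since $(dd^c + d^cd)\alpha$ need not vanish a priori — but here one should note that $\ker d \cap \ker d^c$ is contained in $B^1$). Writing $\alpha = \alpha^{1,0} + \alpha^{0,1}$, I would then argue that the two summands are separately $d$-closed: this is exactly the content of Remark \ref{rem:bigraded}, namely that for $(p,q) \in \{(1,0),(0,1)\}$ one has $\ker d \cap A^{p,q} = \ker d^c \cap A^{p,q} = H^{p,q}_{d+d^c}$.

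The key computation is the following. Since $d = \mu + \partial + \bar\partial + \bar\mu$, applying $d$ to $\alpha^{1,0}$ lands in $A^{2,0} \oplus A^{1,1} \oplus A^{0,2}$ (the $\mu$-component of a $(1,0)$-form being the $(2,-1)$-piece, which vanishes), and similarly $d\alpha^{0,1} \in A^{2,0} \oplus A^{1,1} \oplus A^{0,2}$. But $d\alpha = 0$ forces $d\alpha^{1,0} = -d\alpha^{0,1}$. Comparing bidegrees: the $(2,0)$-component gives $\partial\alpha^{1,0} = 0$ and $\bar\mu\alpha^{0,1}$ has bidegree $(2,0)$... I would be careful here and instead use the cleaner route: a $(1,0)$-form $\alpha^{1,0}$ has $d^c\alpha^{1,0} = i(\bar\delta - \delta)\alpha^{1,0}$, and on $(1,0)$-forms one checks directly that $d\alpha^{1,0} = 0 \iff d^c\alpha^{1,0} = 0$ by a bidegree argument (the obstruction pieces $\mu\alpha^{1,0} = 0$ trivially, and $\partial$, $\bar\partial$, $\bar\mu$ pieces get distinct signs under $J$). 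Hence if $d\alpha = 0$ and $d^c\alpha = 0$, taking sum and difference of these two equations and projecting onto bidegrees shows $d\alpha^{1,0} = 0$ and $d\alpha^{0,1} = 0$ separately. This gives the map $H^1_{d+d^c} \to H^{1,0}_{d+d^c} \oplus H^{0,1}_{d+d^c}$, $[\alpha] \mapsto (\alpha^{1,0}, \alpha^{0,1})$.

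Conversely, given $\alpha^{1,0} \in \ker d \cap A^{1,0}$ and $\alpha^{0,1} \in \ker d \cap A^{0,1}$, I would show $\alpha^{1,0} + \alpha^{0,1} \in \ker d \cap \ker d^c \cap A^1$: it is $d$-closed by assumption, and $d^c$-closed because each summand is (using again the $(p,q) \in \{(1,0),(0,1)\}$ equivalence of $d$- and $d^c$-closedness). This produces the inverse map. Since there are no nontrivial coboundaries in degree $1$ (the image of $dd^c$ on $B^{-1}$ is zero), both maps descend trivially and the two compositions are the identity, yielding the direct sum decomposition.

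The main obstacle — really the only subtle point — is justifying that $d$-closedness and $d^c$-closedness coincide on pure $(1,0)$- and $(0,1)$-forms, i.e.\ the claim invoked in Remark \ref{rem:bigraded}. I would prove it by the bidegree bookkeeping above: for $\alpha^{1,0} \in A^{1,0}$, $d\alpha^{1,0} = \partial\alpha^{1,0} + \bar\partial\alpha^{1,0} + \bar\mu\alpha^{1,0}$ with components in $A^{2,0}, A^{1,1}, A^{0,2}$ respectively (and $\mu\alpha^{1,0} \in A^{2,-1} = 0$), while $d^c\alpha^{1,0} = J^{-1}dJ\alpha^{1,0} = i \cdot (\text{same three components with coefficients } +1, -1, -1 \text{ respectively, up to a common factor})$; since the three target bidegrees are distinct, $d\alpha^{1,0} = 0$ if and only if all three components vanish if and only if $d^c\alpha^{1,0} = 0$. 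The analogous statement for $(0,1)$-forms follows by conjugation. Everything else is formal.
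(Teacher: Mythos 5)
Your proof is correct and follows essentially the same route as the paper's: split $\alpha$ into its bidegree components, use that $d$- and $d^c$-closedness coincide on pure $(1,0)$- and $(0,1)$-forms (the content of Remark \ref{rem:bigraded}, which you actually prove rather than merely cite), and observe that there are no $dd^c$-exact $1$-forms for degree reasons. The only blemishes are harmless bidegree slips in the route you abandon (e.g.\ $\mu\alpha^{1,0}$ lies in $A^{3,-1}$, not $A^{2,-1}$, and the $(2,0)$-component of $d\alpha^{0,1}$ is $\mu\alpha^{0,1}$, not $\bar\mu\alpha^{0,1}$), which do not affect the final argument.
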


\begin{proof}
    A Bott-Chern cohomology class in $H^1_{d + d^c}$ is given by a $1$-form $\alpha$ that is $d$-closed and $d^c$-closed. Furthermore, there are no $d d^c$-exact $1$-forms for degree reasons. Split $\alpha$ according to the bidegree as
    \[
    \alpha = \alpha^{1,0} + \alpha^{0,1}.
    \]
    Since $\alpha$ is both $d$-closed and $d^c$-closed, the forms $\alpha^{1,0}$ and $\alpha^{0,1}$ are both $d$-closed (thus $d^c$-closed by remark \ref{rem:bigraded}), so that they define two Bott-Chern cohomology classes in $H^{1,0}_{d + d^c}$, resp.\ $H^{1,0}_{d + d^c}$. The converse inclusion is immediate.
\end{proof}

\begin{remark}
The decomposition of Bott-Chern cohomology of $1$-forms can also be seen as a consequence of a more general decomposition (Corollary \ref{cor:BC:splitting}). A similar splitting for $2$-forms cannot hold (Example \ref{ex:no:decomposition}).
\end{remark}

\subsection{Cohomologies of the operators \texorpdfstring{$\delta$}{} and \texorpdfstring{$\bar \delta$}{}.}\label{sec:delta:cohom}

In general, the operators $\delta$ and $\bar \delta$ anti-commute but do not square to zero, and their cohomology is not well-defined. Note that, by \eqref{eq:delta2}, forms where $d d^c + d^c d = 0$ coincide with forms where $\delta^2=0$. Thus the subcomplex $B^\bullet$ appears to be the natural space on which to define the cohomologies of $\delta$, $\bar \delta$.

\begin{definition}\label{def:delta:cohom}
    Let $(M,J)$ be an almost complex manifold. The \emph{$\delta$-cohomology of $(M,J)$} is
    \[
    H^k_\delta (M,J) := \frac{\ker (\delta \colon B^k \longrightarrow B^{k+1})}{\Ima (\delta \colon B^{k-1} \longrightarrow B^k)}.
    \]
    The \emph{$\bar \delta$-cohomology of $(M,J)$} is 
    \[
    H^k_{\bar \delta} (M,J) := \frac{\ker (\bar \delta \colon B^k \longrightarrow B^{k+1})}{\Ima (\bar \delta \colon B^{k-1} \longrightarrow B^k)}.
    \]
\end{definition}

Following definition \ref{def:BCA:cohomology}, one can define Bott-Chern and Aeppli cohomologies built using $\delta$, $\bar \delta$, instead of $d$, $d^c$.

\begin{definition}\label{def:BCA:delta}

Let $(M,J)$ be an almost complex manifold. The \emph{$(\delta + \bar \delta)$-cohomology} of $(M,J)$ is
    \[
    H^k_{\delta + \bar \delta} (M,J) := \frac{\ker (\delta \colon B^k \rightarrow B^{k+1}) \cap \ker (\bar \delta \colon B^k \rightarrow B^{k+1})}{ \Ima (\delta \bar \delta \colon B^{k-2} \rightarrow B^k)}.
    \]
    The \emph{$\delta \bar \delta$-cohomology of $(M,J)$} is 
    \[
    H^k_{dd^c} (M,J) := \frac{\ker (\delta \bar \delta \colon C^k \rightarrow C^{k+2}) }{ \Ima (\delta \colon C^{k-1} \rightarrow C^k) + \Ima (\bar \delta \colon C^{k-1} \rightarrow C^k)}.
    \]
\end{definition}

In the integrable case, Bott-Chern and Aeppli cohomologies built using the operators $d$, $d^c$ and $\partial$, $\bar \partial$ coincide. This is still true in the non-integrable case for Bott-Chern and Aeppli cohomologies defined using the operators $d$, $d^c$ and $\delta$, $\bar \delta$.

\begin{proposition}\label{prop:BCA:equivalence}
    Let $(M,J)$ be an almost complex manifold. Then
    \begin{equation}
        H^\bullet_{d + d^c} (M,J) = H^\bullet_{\delta + \bar \delta} (M,J) \quad  \text{and} \quad H^\bullet_{d d^c} (M,J) = H^\bullet_{\delta \bar \delta} (M,J).
    \end{equation}
\end{proposition}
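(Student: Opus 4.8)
The plan is to show that the numerators and denominators of the two pairs of quotient spaces coincide term by term. This rests on two elementary facts: that $(d,d^c)$ and $(\delta,\bar\delta)$ are interchangeable by an invertible $\C$-linear substitution, and that on $B^\bullet$ and on $C^\bullet$ the composite $\delta\bar\delta$ is a nonzero scalar multiple of $dd^c$.

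First I would collect the preliminaries. Since $\delta=\tfrac12(d+id^c)$ and $\bar\delta=\tfrac12(d-id^c)$ are $\C$-linear combinations of $d$ and $d^c$ by \eqref{eq:delta:ddc}, Proposition \ref{prop:double:complex} shows that $\delta$ and $\bar\delta$ preserve $B^\bullet$ and descend to $C^\bullet$; and by \eqref{eq:delta2} the defining relation $dd^c+d^cd=0$ of $B^\bullet$ is the same as $\delta^2=0=\bar\delta^2$, so $(B^\bullet,\delta,\bar\delta)$ and $(C^\bullet,\delta,\bar\delta)$ are $\Z$-graded double complexes and Definition \ref{def:BCA:delta} is legitimate. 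The algebraic fact I would then isolate is that $d^cd=-dd^c$ on both $B^\bullet$ and $C^\bullet$, so \eqref{eq:deltabardelta} becomes $\delta\bar\delta=-\tfrac{i}{4}(dd^c-d^cd)=-\tfrac{i}{2}\,dd^c$ on those complexes.

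For the Bott-Chern equality I would note that the two numerators are literally the same subspace of $B^k$: by \eqref{eq:ddc:delta} and \eqref{eq:delta:ddc}, a form is simultaneously $d$- and $d^c$-closed exactly when it is simultaneously $\delta$- and $\bar\delta$-closed. For the denominators, $\delta\bar\delta=-\tfrac{i}{2}\,dd^c$ on $B^{k-2}$ gives $\Ima(\delta\bar\delta\colon B^{k-2}\to B^k)=\Ima(dd^c\colon B^{k-2}\to B^k)$, so $H^k_{d+d^c}=H^k_{\delta+\bar\delta}$. For the Aeppli equality, the same scalar relation on $C^k$ gives $\ker(\delta\bar\delta\colon C^k\to C^{k+2})=\ker(dd^c\colon C^k\to C^{k+2})$, so the numerators agree; and for the denominators one checks $\Ima(d\colon C^{k-1}\to C^k)+\Ima(d^c\colon C^{k-1}\to C^k)=\Ima(\delta\colon C^{k-1}\to C^k)+\Ima(\bar\delta\colon C^{k-1}\to C^k)$, using that each of $d,d^c$ is a combination of $\delta,\bar\delta$ and conversely (e.g.\ $\delta\alpha=d(\tfrac12\alpha)+d^c(\tfrac{i}{2}\alpha)$ shows $\Ima\delta\subseteq\Ima d+\Ima d^c$, and symmetrically). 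Hence $H^k_{dd^c}=H^k_{\delta\bar\delta}$.

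I do not anticipate a real obstacle: the only point requiring attention is the routine verification that all four operators genuinely pass to the quotient complex $C^\bullet$ and that the coincidences of numerators and of denominators are equalities of subspaces, which is exactly what makes the two cohomology theories literally equal rather than merely isomorphic. All the ingredients are already available from Proposition \ref{prop:double:complex} and the structural identities \eqref{eq:delta:ddc}, \eqref{eq:ddc:delta}, \eqref{eq:delta2}, \eqref{eq:deltabardelta}.
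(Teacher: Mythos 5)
Your argument is correct and follows essentially the same route as the paper: the kernels coincide by the linear substitutions \eqref{eq:delta:ddc}, \eqref{eq:ddc:delta}, and the images coincide via \eqref{eq:deltabardelta}, with the Aeppli case handled analogously on $C^\bullet$. Your explicit observation that on $B^\bullet$ and $C^\bullet$ one has $\delta\bar\delta=-\tfrac{i}{2}\,dd^c$ simply spells out the step the paper leaves implicit when it invokes \eqref{eq:deltabardelta}.
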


\begin{proof}
    By \eqref{eq:delta:ddc} and \eqref{eq:ddc:delta}, we have that
    \[
    \ker d \cap \ker d^c = \ker \delta \cap \ker \bar \delta.
    \]
    By \eqref{eq:deltabardelta}, one has
    \[
    \Ima (d d^c \colon B^{k-2} \longrightarrow B^k) =  \Ima (\delta \bar \delta \colon B^{k-2} \longrightarrow B^k),
    \]
    so that we conclude that
    \[
    H^k_{d + d^c} = H^k_{\delta + \bar \delta}.
    \]
    The equality between Aeppli cohomology and $\delta \bar \delta$-cohomology follows with a similar reasoning, since we are computing the cohomologies on forms in $C^\bullet$.
\end{proof}

\subsection{\texorpdfstring{$\Z_2$}{}-decomposition of Bott-Chern cohomology.}\label{sec:even:odd}

The use of the operators $d$, $d^c$ (or, equivalently, of $\delta$, $\bar \delta$) to define Bott-Chern cohomology naturally induces a $\Z_2$-splitting of the cohomology. In this section we explain how to define such a splitting and how it is related to a generalization of the $J$-invariant and $J$-anti-invariant cohomologies introduced by Draghici, Li and Zhang.
\vspace{.2cm}

First, we observe that the bigrading induces a splitting of a $k$-form into an even and an odd part. We set
\begin{equation}\label{eq:splitting:parity}
    A^k_{ev} := \bigoplus_{\substack{p+q =k \\ p \text{ even}}} A^{p,q}, \quad \quad A^k_{od} := \bigoplus_{\substack{p+q =k \\ p \text{ odd}}} A^{p,q}.
\end{equation}

\begin{definition}\label{def:even:odd:forms}
If $\alpha \in A^k_{ev}$ , we say that $\alpha$ is an \emph{even $k$-form}. Similarly, if $\alpha \in A^k_{od}$ , we say that $\alpha$ is an \emph{odd $k$-form}. 
\end{definition}

This provides a direct sum
\begin{equation}\label{eq:even:odd}
A^k = A^k_{ev} \oplus A^k_{od},
\end{equation}
that allows to split $\alpha \in A^k$ into an \emph{even part} and an \emph{odd part} 
\[
\alpha = \alpha^{ev} + \alpha ^{od},
\]
given by the projections on the subspaces defined in \eqref{eq:splitting:parity}. The same splitting can be given for forms in $B^k$. It is immediate from the definition to check that the wedge product of forms of the same parity is even, while the wedge product of forms of opposite parity is odd. Thus the decomposition \eqref{eq:even:odd} endows $A^\bullet$ with a $\Z_2$-graded algebra structure. With respect to the $\Z_2$-grading on $A^k$, we have that complex conjugation sends even forms into odd forms, and vice versa.

\begin{definition}\label{def:even:odd:operator}
Let $P \colon A^\bullet \rightarrow A^\bullet$ be a differential operator. We say that $P$ is \emph{even} if it preserves the parity of forms, i.e., if
\[
P( A^\bullet_{ev}) \subseteq A^\bullet_{ev} \quad \text{and} \quad P( A^\bullet_{od}) \subseteq A^\bullet_{od}.
\]
We say that $P$ is \emph{odd} if it changes the parity of forms, i.e., if
\[
P( A^\bullet_{ev}) \subseteq A^\bullet_{od} \quad \text{and} \quad P( A^\bullet_{od}) \subseteq A^\bullet_{ev}.
\]
\end{definition}
The differential $d$ is neither even nor odd, but it admits a splitting into an even and an odd part, that correspond to the operators $\delta$, $\bar \delta$. Indeed, when computing $d$ on even forms, $\delta$ is the projection of $d$ onto odd forms, while $\bar \delta$ is the projection onto even forms. The parities of most of the operators we study can be determined without effort and are collected in the next proposition.

\begin{proposition}\label{prop:parity}
    We have the following properties:
    \begin{itemize}
        \item [$(i)$] the composition of operators of the same parity is even;

        \item [$(ii)$] the composition of operators of opposite parity is odd;
        
        \item [$(iii)$] the operator $\delta$ is odd while $\bar \delta$ is even;

        \item [$(iv)$] the operator $ 4 i \delta \bar \delta = d d^c - d^c d$ is odd; 

         \item [$(v)$] the operator $ - 4 i \delta^2 = d d^c + d^c d$ is even;

         \item [$(vi)$] the operator $d d^c$ restricted to $B^\bullet$ is odd.
         
    \end{itemize}
\end{proposition}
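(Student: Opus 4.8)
The plan is to verify each of the six items directly from the definitions of parity (Definition~\ref{def:even:odd:operator}) and from the bidegree behaviour of the constituent operators $\mu, \partial, \bar\partial, \bar\mu$. The key observation, which I would state first, is that with respect to the splitting $A^k = A^k_{ev} \oplus A^k_{od}$ given by the parity of the holomorphic degree $p$, the operator $\partial$ raises $p$ by $1$, $\bar\partial$ leaves $p$ unchanged, $\mu$ raises $p$ by $2$ (hence is even, in the parity sense, since $p$ and $p+2$ have the same parity), and $\bar\mu$ lowers $p$ by $1$. Consequently $\delta = \partial + \bar\mu$ changes the parity of $p$ and is therefore \emph{odd}, while $\bar\delta = \bar\partial + \mu$ preserves the parity of $p$ and is therefore \emph{even}. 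This settles $(iii)$, and I would present it as the substantive starting point since everything else follows formally.

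Items $(i)$ and $(ii)$ are immediate: if $P$ sends $A^\bullet_{ev}$ to $A^\bullet_{\epsilon}$ and $A^\bullet_{od}$ to $A^\bullet_{\epsilon'}$ (with $\{\epsilon,\epsilon'\}$ equal to $\{ev,ev\}$ or $\{od,od\}$ for even $P$, and $\{ev,od\}$ for odd $P$), then composing two operators just composes these assignments, and one checks the sign rule $(+)(+) = (+)$, $(-)(-) = (+)$, $(+)(-) = (-)$ exactly as for $\mathbb{Z}_2$-degrees. I would phrase this as: the parities form the group $\mathbb{Z}_2$ and composition of parity-homogeneous operators is additive in parity. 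Then $(iv)$ follows from $(i)$ and $(iii)$ since $\delta\bar\delta$ is the composition of an odd and an even operator, hence odd, and by~\eqref{eq:deltabardelta} it equals $-\tfrac{i}{4}(dd^c - d^c d)$, so $4i\delta\bar\delta = dd^c - d^c d$ is odd. Similarly $(v)$ follows from $(i)$ and $(iii)$: $\delta^2$ is the composition of two odd operators, hence even, and by~\eqref{eq:delta2} we have $-4i\delta^2 = dd^c + d^c d$, which is therefore even.

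For $(vi)$, the point is that on $B^\bullet$ we have $dd^c + d^c d = 0$ by definition of $B^\bullet$, so that restricted to $B^\bullet$ the operator $dd^c$ agrees with $\tfrac{1}{2}(dd^c - d^c d)$, which is odd by $(iv)$; equivalently, $dd^c|_{B^\bullet}$ equals $2i\delta\bar\delta|_{B^\bullet}$ up to the constant appearing in~\eqref{eq:deltabardelta}, and $\delta\bar\delta$ is odd. One small point I would be careful about: $dd^c$ genuinely maps $B^\bullet$ to $B^\bullet$ (Proposition~\ref{prop:double:complex}), so the restriction makes sense as an operator $B^\bullet \to B^\bullet$, and the parity statement is with respect to the induced splitting $B^k = B^k_{ev} \oplus B^k_{od}$ mentioned in the paragraph preceding Definition~\ref{def:even:odd:forms}.

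The only real obstacle, which is genuinely minor, is bookkeeping the bidegree shifts of $\mu$ and $\bar\mu$ correctly and making sure the parity (rather than the actual value of $p$) is what matters: $\mu \colon A^{p,q} \to A^{p+2,q-1}$ preserves the parity of $p$ and $\bar\mu \colon A^{p,q} \to A^{p-1,q+2}$ flips it, so $\bar\delta = \bar\partial + \mu$ is parity-preserving and $\delta = \partial + \bar\mu$ is parity-reversing. Once this is pinned down, $(i)$--$(vi)$ are purely formal consequences and the proof is short.
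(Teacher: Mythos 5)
Your proof is correct and follows essentially the same route as the paper's: $(i)$--$(ii)$ are the formal $\mathbb{Z}_2$-grading rule, $(iii)$ comes from the bidegree shifts of $\partial,\bar\partial,\mu,\bar\mu$ in $\delta=\partial+\bar\mu$ and $\bar\delta=\bar\partial+\mu$, $(iv)$--$(v)$ follow from these together with \eqref{eq:delta2}--\eqref{eq:deltabardelta}, and $(vi)$ uses $dd^c+d^cd=0$ on $B^\bullet$ to identify $dd^c|_{B^\bullet}$ with $\tfrac12(dd^c-d^cd)=2i\delta\bar\delta$, exactly as in the paper. The only blemish is the garbled parenthetical describing $\{\epsilon,\epsilon'\}$ in your treatment of $(i)$--$(ii)$, which does not affect the argument.
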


\begin{proof}
    We have that $(i)$ and $(ii)$ follow immediately from the definition of even and odd operators, $(iii)$ follows from the explicit expression of $\delta$ and $\bar \delta$ in terms of $\mu$, $\partial$, $\bar \partial$, $\bar \mu$, while $(iv)$ and $(v)$ are a consequence of $(i)-(iii)$. For $(vi)$, let $\alpha \in B^\bullet$ be an even form. Then $(d d^c + d^c d)\alpha = - 4 i \delta^2 \alpha = 4 i \bar \delta^2 \alpha = 0$. In particular, by \eqref{eq:ddc:delta}, we also have that
    \[
    d d^c \alpha = i (\delta + \bar \delta)(\bar \delta - \delta) \alpha = i (\bar \delta^2 + 2 \delta \bar \delta -  \delta^2) \alpha = 2 i \delta \bar \delta \alpha.
    \]
    Thus on $B^\bullet$, the operator $d d^c$ coincides with $\delta \bar \delta$ up to a constant and it is odd.
\end{proof}

Using even and odd forms, one can define an associated de Rham cohomology.

\begin{definition}\label{def:even:odd:cohomology}
    Let $(M,J)$ be an almost complex manifold. The \emph{$J$-even cohomology} of $(M,J)$ is 
    \[
    H^k_{ev} (M,J) := \{ [\alpha] \in H^k_{dR}: \alpha \in A^k_{ev} \}.
    \]
    The \emph{$J$-odd cohomology} of $(M,J)$ is 
    \[
    H^k_{od} (M,J) := \{ [\alpha] \in H^k_{dR}: \alpha \in A^k_{od} \}.
    \]
\end{definition}
The $J$-even and $J$-odd cohomologies are a special case of the cohomology $H^S_J$ \cite{AT11} obtained by taking $S = \{ (p,q) : \text{$p$ is even} \}$, resp.\ $S = \{ (p,q) : \text{$p$ is odd} \}$. They also generalize the notions of cohomologies given by Draghici, Li and Zhang. Indeed, if $k=2$, we have that
\[
H^2_{ev} = H^{(2,0)(0,2)}_J \quad \text{and} \quad  H^2_{od} = H^{(1,1)}_J,
\]
where $H^{(2,0)(0,2)}_J$ and $H^{(1,1)}_J$ are generalizations of Dolbeault cohomology groups to almost complex manifolds \cite{LZ09}. Moreover, if $M$ is a $4$-manifold, then
\[
H^2_{ev} = H^-_J \quad \text{and} \quad H^2_{od} = H^+_J,
\]
where $H^-_J$, resp.\ $H^+_J$, is the $J$-anti-invariant cohomology, resp.\ $J$-invariant cohomology, of $(M,J)$ \cite{DLZ10}. Apart from generalizing well-known cohomologies, $J$-even and $J$-odd cohomologies arise naturally in the context of Bott-Chern cohomology and provide a splitting of $H^k_{d + d^c}$.

\begin{theorem}\label{thm:BC:splitting}
    There is a natural map
    \begin{align}\label{eq:splitting}
    \begin{split}
    H^k_{d + d^c} &\longrightarrow H^k_{ev} + H^k_{od}, \\
    [\alpha]_{d + d^c} &\longmapsto [\alpha^{ev}]_d + [\alpha^{od}]_d.
    \end{split}
    \end{align}
\end{theorem}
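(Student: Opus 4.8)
The plan is to show that the assignment $[\alpha]_{d+d^c} \mapsto [\alpha^{ev}]_d + [\alpha^{od}]_d$ is well-defined, which amounts to two checks: first, that $\alpha^{ev}$ and $\alpha^{od}$ are genuinely $d$-closed (so that the right-hand side makes sense as a sum of de Rham classes lying in $H^k_{ev}$ and $H^k_{od}$ respectively), and second, that changing the representative of $[\alpha]_{d+d^c}$ does not change the resulting pair of classes.

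For the first check, I would start from a representative $\alpha \in B^k$ with $d\alpha = 0$ and $d^c\alpha = 0$. By \eqref{eq:ddc:delta} the conditions $d\alpha = 0$ and $d^c\alpha = 0$ are together equivalent to $\delta\alpha = 0$ and $\bar\delta\alpha = 0$. Now split $\alpha = \alpha^{ev} + \alpha^{od}$. By Proposition \ref{prop:parity}$(iii)$, $\delta$ is odd and $\bar\delta$ is even, so $\delta\alpha^{ev}$ is odd, $\delta\alpha^{od}$ is even, $\bar\delta\alpha^{ev}$ is even, $\bar\delta\alpha^{od}$ is odd. Hence $0 = \delta\alpha + \bar\delta\alpha = d\alpha$ splits by parity into $\bar\delta\alpha^{ev} + \delta\alpha^{od} = 0$ (even part) and $\delta\alpha^{ev} + \bar\delta\alpha^{od} = 0$ (odd part). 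Combined with $\delta\alpha = \bar\delta\alpha = 0$ splitting the same way, one gets $\delta\alpha^{ev} = \bar\delta\alpha^{ev} = 0$ and likewise for $\alpha^{od}$; therefore $d\alpha^{ev} = (\delta + \bar\delta)\alpha^{ev} = 0$ and $d\alpha^{od} = 0$. So each piece is $d$-closed and defines a class in $H^k_{ev}$, resp. $H^k_{od}$.

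For the second check, the ambiguity in the Bott-Chern representative is a $dd^c$-exact form $dd^c\beta$ with $\beta \in B^{k-2}$. By Proposition \ref{prop:parity}$(vi)$, $dd^c$ restricted to $B^\bullet$ is odd; more precisely the computation in that proof shows $dd^c = 2i\,\delta\bar\delta$ on $B^\bullet$. Writing $\beta = \beta^{ev} + \beta^{od}$, the odd operator $dd^c$ sends $\beta^{ev}$ to an odd form and $\beta^{od}$ to an even form, so $(dd^c\beta)^{ev} = dd^c\beta^{od}$ and $(dd^c\beta)^{od} = dd^c\beta^{ev}$. Passing to de Rham cohomology, both are $d$-exact: indeed $dd^c\beta^{od} = -d^c d\beta^{od} + (dd^c + d^c d)\beta^{od} = d(-d^c\beta^{od})$ since $\beta \in B^\bullet$ kills the symmetrized operator, and similarly $dd^c\beta^{ev}$ is $d$-exact. (One must be slightly careful here that $d^c\beta^{od}$ and $d^c\beta^{ev}$ need not individually lie in $B^\bullet$, but that is irrelevant: we only need $d$-exactness in $A^\bullet$ to conclude the de Rham class vanishes.) Hence adding $dd^c\beta$ to $\alpha$ changes $\alpha^{ev}$ and $\alpha^{od}$ by $d$-exact forms, leaving $[\alpha^{ev}]_d$ and $[\alpha^{od}]_d$ unchanged. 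This establishes that the map \eqref{eq:splitting} is well-defined; linearity is immediate from linearity of the parity projections.

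The step I expect to require the most care is the well-definedness under change of representative, specifically keeping straight that the relevant exactness ($dd^c\beta^{od} = d(-d^c\beta^{od})$) is being asserted in $A^\bullet$ rather than in $B^\bullet$, and invoking the identity $(dd^c + d^c d)\beta = 0$ valid on $B^\bullet$ to rewrite $dd^c$ as (minus) $d^c d$. The parity bookkeeping in the first check is routine once one records how $\delta$ and $\bar\delta$ act on the even/odd decomposition, but it is worth writing out the four parity components of $d\alpha = 0$ explicitly so the reader sees why closedness passes to each summand.
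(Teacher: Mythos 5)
Your proof is correct and follows essentially the same route as the paper: the paper shows that $d\alpha=0$ and $d^c\alpha=0$ hold if and only if $d\alpha^{ev}=d\alpha^{od}=0$ by using that $J$ acts on $\alpha^{ev}$ and $\alpha^{od}$ by scalars differing by a sign, which is just another way of phrasing your parity bookkeeping with $\delta$ (odd) and $\bar\delta$ (even), since $\delta,\bar\delta$ are exactly the parity components of $d$. Your extra check of independence from the Bott-Chern representative goes beyond what the paper records in this proof (the paper only handles the exact terms later, in the proof of Corollary \ref{cor:BC:splitting}); it is harmless and in the spirit of that corollary. One small slip there: the chain $dd^c\beta^{od}=-d^cd\beta^{od}+(dd^c+d^cd)\beta^{od}=d(-d^c\beta^{od})$ is not right, since $-d^cd\beta^{od}$ is not $d(-d^c\beta^{od})=-dd^c\beta^{od}$; but the detour is unnecessary because $dd^c\beta^{od}=d\bigl(d^c\beta^{od}\bigr)$ is manifestly $d$-exact in $A^\bullet$, so the conclusion you need stands without it.
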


\begin{proof}
    The theorem is a consequence of the fact that a $k$-form $\alpha$ is $d$-closed and $d^c$-closed if and only if its even and odd part are both $d$-closed (cf.\ remark \ref{rem:bigraded}). Decompose $\alpha = \alpha^{ev} + \alpha^{od}$. The almost complex structure $J$ acts on even and odd forms as multiplication by a constant, and the two constants differ by a sign. More precisely, we have that
    \[
    J \alpha^{ev} = (-i)^k \alpha^{ev} \quad \text{and} \quad J \alpha^{od} = - (-i)^k \alpha^{od},
    \]
    so that 
    \begin{align*}
    d \alpha = 0 &\Leftrightarrow d \alpha^{ev} + d \alpha^{od} =0, \quad \text{and} \\
    d^c \alpha = 0 &\Leftrightarrow d \alpha^{ev} - d \alpha^{od} =0.
    \end{align*}
    In particular, $\alpha$ defines a Bott-Chern cohomology class if and only if $\alpha^{ev}$ and $\alpha^{od}$ define de Rham cohomology classes, giving the map \eqref{eq:splitting}.
\end{proof}

We recall that, if $k=2$ and $(M,J)$ is a compact almost complex $4$-manifold, then de Rham cohomology splits as the direct sum of $J$-invariant and $J$-anti-invariant cohomologies \cite[Theorem 2.3]{DLZ10}. However, since de Rham cohomology classes are defined up to $d$-exact forms and $d$ is neither an even nor odd operator, we have that $H^k_{ev} \cap H^k_{od} \neq \{ 0 \}$. In particular, in general there is no splitting 
\[
H^k_d = H^k_{ev} \oplus H^k_{od}.
\]
However, we can define cohomologies in the context of Bott-Chern cohomology that take into account the parity of the forms involved.

\begin{definition}\label{def:even:odd:BC}
Let $(H^k_{d+d^c})^{ev}$ be the space of Bott-Chern cohomology classes computed on even forms, i.e.,
\[
(H^k_{d+d^c})^{ev} = \frac{\ker d \cap \ker d^c \cap A^k_{ev}}{\Ima ( dd^c \colon B^{k-2}_{od} \rightarrow B^{k}_{ev})},
\]
and let $(H^k_{d+d^c})^{od}$ be the space of Bott-Chern cohomology classes computed on odd forms, i.e.,
\[
(H^k_{d+d^c})^{od} = \frac{\ker d \cap \ker d^c \cap A^k_{od}}{\Ima ( dd^c \colon B^{k-2}_{ev} \rightarrow B^{k}_{od})}.
\]
\end{definition}
Using these cohomologies, we can prove the desired splitting.

\begin{cor}\label{cor:BC:splitting}
    The map \eqref{eq:splitting} induces a decomposition of Bott-Chern cohomology
    \begin{equation}\label{eq:even:odd:decomposition}
        H^k_{d+d^c} = (H^k_{d+d^c})^{ev} \oplus (H^k_{d+d^c})^{od}.
    \end{equation}
\end{cor}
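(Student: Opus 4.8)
The plan is to show that the map \eqref{eq:splitting} is an isomorphism onto its image by constructing explicit inverse maps at the level of cohomology classes, using the decomposition $\alpha = \alpha^{ev} + \alpha^{od}$ established in the proof of Theorem \ref{thm:BC:splitting}. First I would define two maps $H^k_{d+d^c} \to (H^k_{d+d^c})^{ev}$ and $H^k_{d+d^c} \to (H^k_{d+d^c})^{od}$ sending $[\alpha]_{d+d^c}$ to $[\alpha^{ev}]_{d+d^c}$, resp.\ $[\alpha^{od}]_{d+d^c}$. These are well-defined: by the computation in the proof of Theorem \ref{thm:BC:splitting}, if $\alpha$ is $d$-closed and $d^c$-closed then $\alpha^{ev}$ and $\alpha^{od}$ are each $d$-closed (hence $d^c$-closed, since on a form of pure parity $d^c$-closedness is equivalent to $d$-closedness by the same $J$-eigenvalue argument); moreover both lie in $B^\bullet$, since $(dd^c+d^cd)$ is an even operator by Proposition \ref{prop:parity}$(v)$, so it sends $B^\bullet_{ev}$ to $B^\bullet_{ev}$ and $B^\bullet_{od}$ to $B^\bullet_{od}$, and $\alpha \in B^k$ forces each parity component to lie in $B^k$. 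Finally, if $\alpha = dd^c\beta$ with $\beta \in B^{k-2}$, then writing $\beta = \beta^{ev}+\beta^{od}$ and using Proposition \ref{prop:parity}$(vi)$ that $dd^c$ is \emph{odd} on $B^\bullet$, we get $\alpha^{ev} = dd^c\beta^{od}$ and $\alpha^{od} = dd^c\beta^{ev}$, which are exactly the boundaries appearing in Definition \ref{def:even:odd:BC}. So the two maps descend to cohomology.

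Next I would check that these maps, together with the obvious inclusions $(H^k_{d+d^c})^{ev} \hookrightarrow H^k_{d+d^c}$ and $(H^k_{d+d^c})^{od} \hookrightarrow H^k_{d+d^c}$ induced by the inclusions $A^k_{ev}, A^k_{od} \hookrightarrow A^k$, exhibit \eqref{eq:even:odd:decomposition}. The inclusions are well-defined for the same reason: a $d$- and $d^c$-closed even (resp.\ odd) form is in particular $d$- and $d^c$-closed in $B^k$, and the denominator $\Ima(dd^c\colon B^{k-2}_{od}\to B^k_{ev})$ maps into $\Ima(dd^c\colon B^{k-2}\to B^k)$. Then the composite $H^k_{d+d^c} \to (H^k_{d+d^c})^{ev} \oplus (H^k_{d+d^c})^{od} \to H^k_{d+d^c}$ sends $[\alpha]_{d+d^c}$ to $[\alpha^{ev}]_{d+d^c} + [\alpha^{od}]_{d+d^c} = [\alpha]_{d+d^c}$, so the projection-inclusion composite is the identity; and conversely a class in $(H^k_{d+d^c})^{ev}$ is represented by an even form, whose odd part is zero, so it maps to $([\cdot],0)$, showing the two maps are mutually inverse. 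This gives the direct sum decomposition, and the compatibility with \eqref{eq:splitting} is then automatic since \eqref{eq:splitting} is the composition of the projection $H^k_{d+d^c} \to (H^k_{d+d^c})^{ev}\oplus(H^k_{d+d^c})^{od}$ with the natural maps $(H^k_{d+d^c})^{ev}\to H^k_{ev}$, $(H^k_{d+d^c})^{od}\to H^k_{od}$ given by the identity on representatives.

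The one point that requires genuine care — and which I expect to be the main obstacle — is the behaviour of the denominators: one must verify that the decomposition of the \emph{quotient} is the direct sum of the quotients, i.e.\ that no extra identifications are created or destroyed when passing from $B^{k-2}$ to its parity components. This is precisely where Proposition \ref{prop:parity}$(v)$ and $(vi)$ are essential: $(v)$ guarantees $B^\bullet = B^\bullet_{ev}\oplus B^\bullet_{od}$ as a direct sum of complexes (the defining relation $(dd^c+d^cd)\alpha=0$ is parity-homogeneous), and $(vi)$ guarantees that $dd^c$ swaps the two summands, so that the image $\Ima(dd^c\colon B^{k-2}\to B^k)$ splits cleanly as $dd^c(B^{k-2}_{od}) \oplus dd^c(B^{k-2}_{ev})$ inside $B^k_{ev}\oplus B^k_{od}$. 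Once this is in place, the numerator splits as $(\ker d\cap\ker d^c\cap A^k_{ev}) \oplus (\ker d\cap\ker d^c\cap A^k_{od})$ by the argument in Theorem \ref{thm:BC:splitting}, the denominator splits compatibly, and taking the quotient of a direct sum by a compatibly-split subspace yields \eqref{eq:even:odd:decomposition}.
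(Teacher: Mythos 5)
Your proposal is correct and follows essentially the same route as the paper: split a representative and the potential $\beta$ by parity, use that $dd^c+d^cd$ is even (Proposition \ref{prop:parity}$(v)$) so parity components stay in $B^\bullet$, and that $dd^c$ is odd on $B^\bullet$ (Proposition \ref{prop:parity}$(vi)$) so the parity components of a $dd^c$-exact form are themselves $dd^c$-exact with potentials of the opposite parity. Your write-up merely makes the projection/inclusion maps and the splitting of numerator and denominator more explicit than the paper does.
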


\begin{proof}
    Let $[\alpha]_{d + d^c} \in H^k_{d+d^c}$ be a Bott-Chern cohomology class. Writing the representatives according to the parity of the forms we have that 
    \[
    [\alpha]_{d + d^c} = \alpha^{ev} + \alpha^{od} + (dd^c \beta)^{ev} + (d d^c \beta)^{od},
    \]
    where $\beta \in B^{k-2}$. By proposition \ref{prop:parity} $(vi)$, we know that $dd^c$ is an odd operator. Furthermore $B^{k-2}$ also splits into even and odd part, so that 
    \[
    (dd^c \beta)^{ev} = d d^c (\beta^{od}) \quad \text{and} \quad (dd^c \beta)^{od} = d d^c (\beta^{ev}),
    \]
    proving that the even and odd parts of a $dd^c$-exact form are both $d d^c$-exact. Moreover, since $d d^c + d^c d$ is even, the even and odd part of a form in $B^\bullet$ are both still in $B^\bullet$. This implies that the even and odd part of any representative provide a well-defined splitting of Bott-Chern cohomology.
\end{proof}

\begin{remark}
    Proposition \ref{prop:decomposition:1forms} follows as a corollary of the splitting of Bott-Chern cohomology between even and odd forms in degree $1$.
\end{remark}

\begin{remark}
    The splitting between even and odd forms appears naturally in the context of the symplectic cohomologies, as it has been already observed by Tseng and Yau \cite[Section 5]{TY12a}
\end{remark}

\section{Hodge Theory on almost complex manifolds}\label{sec:hodge:theory}

In this section we study the properties of elliptic, self-adjoint operators built using the operators $d$, $d^c$. We describe the associated spaces of harmonic forms and we prove that there is an injection of such spaces into Bott-Chern and Aeppli cohomologies. We also investigate to which extent their dimension $h^k_{d+d^c}$ is metric independent. As a consequence, we are able to use $h^1_{d+d^c}$ to distinguish between different almost complex structures. Finally, we relate our spaces of harmonic forms with harmonic forms built using $\delta$, $\bar \delta$.

\subsection{Bott-Chern and Aeppli Laplacians on almost Hermitian manifolds.}\label{sec:BCA:laplacians}

Let $(M,J,g)$ be a \textbf{compact} almost Hermitian manifold. In general, we have that $d$ and $ d^c$ do not anti-commute and there are four different Laplacians that generalize Bott-Chern and Aeppli Laplacians.

\begin{definition}\label{def:laplacians}
    The \emph{$(d + d^c)$-Laplacian} is 
    \begin{equation}\label{def:d+dc:laplacian}
    \Delta_{d + d^c} := d d^c ( d d^c)^* + ( d d^c)^* d d^c + d^* d^c ( d^* d^c )^* + ( d^* d^c )^* d^* d^c + d^* d + (d^c)^* d^c.
    \end{equation}

    The \emph{$(d^c + d)$-Laplacian} is 
    \begin{equation}\label{def:dc+d:laplacian}
    \Delta_{d^c + d} := d^c d ( d^c d)^* + ( d^c d)^* d^c d + (d^c)^* d ( (d^c)^* d )^* + ( (d^c)^* d )^* (d^c)^* d + d^* d + (d^c)^* d^c.
    \end{equation}

    The \emph{$d d^c$-Laplacian} is 
    \begin{equation}\label{def:ddc:laplacian}
    \Delta_{d d^c} = d d^c ( d d^c)^* + ( d d^c)^* d d^c +  d (d^c)^* ( d (d^c)^* )^* + ( d (d^c)^* )^* d (d^c)^* + d d^* + d^c (d^c)^* .
    \end{equation}
    
    The \emph{$d^c d$-Laplacian} is 
    \begin{equation}\label{def:dcd:laplacian}
    \Delta_{d^c d} = d^c d ( d^c d)^* + ( d^c d)^* d^c d + d^c d^* ( d^c d^* )^* + ( d^c d^* )^* d^c d^* + d d^* + d^c (d^c)^*.
    \end{equation}
    
\end{definition}
The operators $\Delta_{d + d^c}$ and $\Delta_{d^c + d}$ provide a generalization to the non-integrable case of $\Delta_{BC}$, while $\Delta_{d d^c}$ and $\Delta_{d^c d} $ are a generalization of $\Delta_A$. Indeed, if $d$ and $d^c$ anti-commute then $\Delta_{BC} = \Delta_{d + d^c} = \Delta_{d^c + d}$ and $\Delta_A = \Delta_{d d^c} = \Delta_{d^c d}$. \\
Note that, as in the integrable case, we have that
\[
* \Delta_{d+d^c} = \Delta_{d d^c} * \quad \text{and} \quad * \Delta_{d^c + d} = \Delta_{d^c d}*.
\]
Moreover, we also have
\[
\Delta_{d+d^c} J = J \Delta_{d^c + d} \quad \text{and} \quad \Delta_{d d^c } J = J \Delta_{d^c d}.
\]
We study the kernels of such operators.

\begin{definition}\label{def:P:harmonic}
    Let $P \in \{ d+ d^c, d^c +d, d d^c, d^c d \}$. A $k$-form $\alpha \in A^k$ is said to be \emph{$P$-harmonic} if $\Delta_P (\alpha) =0$.\\
    We denote the space of $P$-harmonic $k$-forms by $\H^k_P$.
\end{definition}

If $\alpha$ is $P$-harmonic, using the equation $\langle \Delta_P (\alpha), \alpha \rangle =0 $, one can explicitly write the spaces of $P$-harmonic forms. More precisely, we have that
\begin{align*}
    &\H^k_{d+d^c} = \{ \alpha \in A^k : d \alpha =0, \, d^c \alpha =0, \, (d d^c)^* \alpha =0 \}, \\
    &\H^k_{d^c+d} = \{ \alpha \in A^k : d \alpha =0, \, d^c \alpha =0, \, (d^c d)^* \alpha =0 \}, \\
    &\H^k_{d d^c} = \{ \alpha \in A^k : d^* \alpha =0, \, (d^c)^* \alpha =0, \, d d^c \alpha =0 \}, \\
    &\H^k_{d^c d} = \{ \alpha \in A^k : d^* \alpha =0, \, (d^c)^* \alpha =0, \, d^c d \alpha =0 \}.
\end{align*}

Regarding the ellipticity of Bott-Chern and Aeppli Laplacians, we can prove the following
\begin{proposition}\label{prop:ellipticity}
    Let $P \in \{ d+ d^c, d^c +d, d d^c, d^c d \}$. Then $\Delta_P$ is a $4^\text{th}$-order, self-adjoint, elliptic operator. There is a decomposition
    \[
    A^k = \H^k_P \overset{\perp}{\oplus} \Ima \Delta_P.
    \]
    Furthermore, $\H^k_P$ is a finite dimensional vector space over $\C$.
\end{proposition}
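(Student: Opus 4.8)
The strategy is the standard one for fourth-order Laplacians of Bott-Chern type, going back to Kodaira--Spencer and Schweitzer \cite{Sch07}: verify that $\Delta_P$ is self-adjoint and elliptic by computing its principal symbol, and then invoke the general elliptic theory for self-adjoint operators on sections of a vector bundle over a compact manifold. Self-adjointness is immediate: each $\Delta_P$ in Definition \ref{def:laplacians} is written as a sum of terms of the form $QQ^* + Q^*Q$ (and $R^*R$ for the first-order pieces $d^*d$, $(d^c)^*d^c$), each of which is manifestly self-adjoint. So the only real content is ellipticity; once that is established, Hodge theory for self-adjoint elliptic operators gives the $L^2$-orthogonal decomposition $A^k = \ker \Delta_P \oplus \Ima \Delta_P$ with $\ker \Delta_P = \H^k_P$ finite-dimensional, which is exactly the claimed statement.

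The heart of the argument is the symbol computation. First I would recall that $d$ and $d^c$ are first-order operators with the same principal symbol up to the action of $J$: writing $\sigma(d)(\xi) = \xi \wedge \cdot$ for a cotangent vector $\xi$, one has $\sigma(d^c)(\xi) = (J^{-1}\xi)\wedge\cdot$ (up to sign), since $d^c = J^{-1}dJ$ and $J$ is an order-zero operator. Consequently $\sigma(d^*)(\xi) = -\iota_{\xi^\sharp}$ and $\sigma((d^c)^*)(\xi) = -\iota_{(J^{-1}\xi)^\sharp}$. The fourth-order part of $\Delta_{d+d^c}$ is $dd^c(dd^c)^* + (dd^c)^*dd^c + d^*d^c(d^*d^c)^* + (d^*d^c)^*d^*d^c$, whose principal symbol at $\xi$ I would denote $\mathcal{L}\mathcal{L}^* + \mathcal{L}^*\mathcal{L} + \mathcal{M}\mathcal{M}^* + \mathcal{M}^*\mathcal{M}$ where $\mathcal{L} = \sigma(dd^c)(\xi)$ and $\mathcal{M} = \sigma(d^*d^c)(\xi)$. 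To show this is positive definite for $\xi \ne 0$, suppose it annihilates $v \in \Lambda^k T^*_xM \otimes \C$; pairing with $v$ forces $\mathcal{L}v = \mathcal{L}^*v = \mathcal{M}v = \mathcal{M}^*v = 0$, and I would then argue that $\mathcal{L}^*v = 0$ together with $\mathcal{M}v = 0$ (i.e.\ $\iota_{\xi^\sharp}(J^{-1}\xi\wedge v) = 0$ after unwinding) already pins $v$ down — this is the same linear-algebra lemma used by Schweitzer, exploiting that $\xi$ and $J^{-1}\xi$ span a $2$-plane and that the combined kernel conditions from the ``$dd^c$'' term and the ``$d^*d^c$'' term force $v = 0$. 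The non-integrability of $J$ does not affect the principal symbol at all, since $\mu$, $\bar\mu$ are order-zero contributions to the lower-order terms: the symbol of $\Delta_P$ in the almost complex case is literally the same as in the integrable case, so the classical argument applies verbatim. The cases $\Delta_{d^c+d}$, $\Delta_{dd^c}$, $\Delta_{d^cd}$ are handled identically, or one can deduce ellipticity of $\Delta_{dd^c}$ from that of $\Delta_{d+d^c}$ via the relation $*\Delta_{d+d^c} = \Delta_{dd^c}*$ noted just before Definition \ref{def:P:harmonic} (conjugating an elliptic operator by the isomorphism $*$ preserves ellipticity).

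The main obstacle is precisely the positivity of the fourth-order symbol: unlike $\Delta_d$, whose symbol $\|\xi\|^2 \Id$ is obviously elliptic, here the top-order symbol is a sum of rank-deficient pieces $\mathcal{L}\mathcal{L}^*$, etc., each individually degenerate, and one must check their sum is non-degenerate. I would handle this by the kernel-intersection argument above, reducing to the statement that $\ker \mathcal{L}^* \cap \ker \mathcal{M} = 0$ (and its conjugate), which is the pointwise linear-algebra fact underlying Schweitzer's ellipticity proof for $\Delta_{BC}$; I would cite \cite{Sch07} for this lemma rather than reproduce the bookkeeping. Once ellipticity and self-adjointness are in hand, the decomposition $A^k = \H^k_P \overset{\perp}{\oplus} \Ima \Delta_P$ and finite-dimensionality of $\H^k_P$ follow from the standard theory of self-adjoint elliptic operators on compact manifolds (e.g.\ Warner or Wells), and the explicit descriptions of $\H^k_P$ recorded before the proposition — obtained from $\langle \Delta_P \alpha, \alpha\rangle = 0$ — complete the picture.
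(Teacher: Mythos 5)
Your proposal is correct in substance but follows a genuinely different route from the paper. You prove ellipticity by computing the principal symbol of the fourth-order part directly: since $\mu$, $\bar\mu$ are of order zero, $\sigma(d)(\xi)=\xi\wedge\cdot$ and $\sigma(d^c)(\xi)=(J^{-1}\xi)\wedge\cdot$ exactly as in the integrable case, and the positivity of the symbol reduces to the pointwise linear algebra of Kodaira--Spencer and Schweitzer \cite{Sch07}. The paper never touches the symbol explicitly: it shows that $\Delta_{d+d^c}$ agrees, modulo lower-order terms, with $\Delta_d\Delta_{d^c}$ and hence with $(\Delta_d)^2$, using that $dd^c+d^cd$ is of order one and that $d(d^c)^*+(d^c)^*d$ is of order one by the almost Hermitian K\"ahler identities of \cite{CW20a, FH22}. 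Your route is more self-contained (no input beyond the order-zero nature of $\mu$, $\bar\mu$, at the price of redoing Schweitzer's bookkeeping for $d$, $d^c$ rather than $\partial$, $\bar\partial$), while the paper's route trades all symbol computations for the cited commutation identities; both conclude with the same standard package for self-adjoint elliptic operators on compact manifolds, which yields the orthogonal decomposition and the finite-dimensionality of $\H^k_P$.

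One caveat: your parenthetical claim that $\mathcal{L}^*v=0$ together with $\mathcal{M}v=0$ already forces $v=0$ is not correct. Decomposing $\Lambda^k T^*_xM\otimes\C$ with respect to the $2$-plane spanned by $\xi$ and $J\xi$ (orthogonal since $g$ is $J$-compatible), each of the four conditions $\mathcal{L}v=\mathcal{L}^*v=\mathcal{M}v=\mathcal{M}^*v=0$ kills exactly one of the four components $v=v_1+\xi\wedge v_2+J\xi\wedge v_3+\xi\wedge J\xi\wedge v_4$; in particular any nonzero $v$ built only from covectors orthogonal to $\xi$ and $J\xi$ lies in $\ker\mathcal{L}^*\cap\ker\mathcal{M}$. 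So all four kernel conditions are needed. Since you do extract all four from $\langle\sigma(\Delta_P)(\xi)v,v\rangle=0$ and the lemma you cite uses them all, this is a local misstatement rather than a gap in the plan.
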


\begin{proof}
    The fact that $\Delta_P$ is a $4^\text{th}$-order and self-adjoint operator follows from the definition. We prove it is elliptic by proving that it coincides, up to terms of lower order, with an elliptic operator. We write down the proof for $P = d + d^c$. The remaining cases follow similarly. Denote by $\cong$ the equality up to lower order terms. We have that:
    \begin{itemize}
        \item $d d^c + d^c d$ has order one, so that $d d^c \cong - d^c d$;

        \item $d (d^c)^* + (d^c)^* d$ has order one by the K\"ahler identities for almost Hermitian manifolds \cite{CW20a, FH22}, so that $d (d^c)^* \cong - (d^c)^* d$;

        \item $\Delta_d$ and $\Delta_{d^c}$ are elliptic.
    \end{itemize}
    Thus, we can conclude that 
    \begin{align*}
        \Delta_{d + d^c} &\cong d d^c ( d d^c)^* + ( d d^c)^* d d^c + d^* d^c ( d^* d^c )^* + ( d^* d^c )^* d^* d^c \cong\\
        &\cong d d^* d^c (d^c)^* + d^* d ( d^c)^* d^c + d^* d d^c (d^c )^* + d d^* ( d^c )^* d^c = \\
        &= \Delta_d \Delta_{d^c} \cong (\Delta_d)^2,
    \end{align*}
    that is elliptic. The orthogonal direct sum decomposition between image and kernel of $\Delta_P$ and the finite dimensionality of the kernel follow from the theory of self-adjoint, elliptic operator on compact manifolds.
\end{proof}

We set
\begin{equation}\label{bc:numbers}
    h^k_P := \dim_\C \H^k_P.
\end{equation}
In principle, $h^k_P$ depends on the choice of $J$ and of $g$. When it is not clear from the context to which almost Hermitian structure we are referring to, we will underline such a dependence by writing $h^k_P(J,g)$.\\
The numbers $h^k_P$ define invariants of almost Hermitian manifolds, as we prove in the following proposition.

\begin{theorem}\label{thm:ah:invariants}
    Let $(M,J,g)$ and $(M',J', g')$ be two almost Hermitian manifolds of the same dimension and let $f \colon M \rightarrow M'$ be a surjective pseudo-holomorphic map such that $f^* g' = g$. Then
    \[
    h^k_P (J',g') \le h^k_P (J, g).
    \]
    Moreover, if $f$ is also a diffeomorphism then
    \[
    h^k_P (J',g') = h^k_P (J, g).
    \]
\end{theorem}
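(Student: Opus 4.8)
The plan is to show that pullback by $f$ gives an injective linear map $f^* \colon \H^k_P(J',g') \to \H^k_P(J,g)$, from which the inequality $h^k_P(J',g') \le h^k_P(J,g)$ follows immediately; the equality in the diffeomorphism case is then a consequence of applying the inequality also to $f^{-1}$.

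**First I would** record the two properties of $f^*$ that make this work. Since $f$ is smooth, $f^*$ commutes with $d$; since $f$ is pseudo-holomorphic, \eqref{eq:J:holomorphic} gives $df \circ J_1 = J_2 \circ df$, hence $f^* \circ J' = J \circ f^*$ on forms, and therefore $f^*$ commutes with $d^c = J^{-1} d J$ as well. The new ingredient compared to Theorem \ref{thm:acinvariants} is the metric hypothesis $f^* g' = g$: because $f$ is a local isometry (being a surjective, hence submersive — actually one should note $f$ is a local diffeomorphism here since $\dim M = \dim M'$ and $f^*g'=g$ forces $df$ to be injective at every point) compatible map between manifolds of the same dimension, $f^*$ intertwines the Hodge stars, $f^* \circ *' = * \circ f^*$, and consequently $f^*$ commutes with the codifferentials $d^* = -*d*$ and $(d^c)^* = -*d^c*$. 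Combining these, $f^*$ commutes with every building block of $\Delta_P$ for each $P \in \{d+d^c, d^c+d, dd^c, d^cd\}$, so $f^* \Delta_P(J',g') = \Delta_P(J,g) f^*$, and in particular $f^*$ maps $\H^k_P(J',g')$ into $\H^k_P(J,g)$. Alternatively, and perhaps more cleanly, one uses the explicit descriptions of the harmonic spaces recorded just before Proposition \ref{prop:ellipticity} (e.g. $\H^k_{d+d^c} = \ker d \cap \ker d^c \cap \ker (dd^c)^*$) together with the commutation relations above.

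**Then I would** prove injectivity of $f^* \colon \H^k_P(J',g') \to \H^k_P(J,g)$. This is where the surjectivity of $f$ enters: a surjective map that is a local diffeomorphism (forced, as noted, by $f^*g' = g$ and equal dimensions) has the property that $f^*\alpha = 0$ implies $\alpha = 0$ for any form $\alpha$ on $M'$ — indeed for any $p' \in M'$ pick $p \in f^{-1}(p')$ and use that $df_p$ is an isomorphism to conclude $\alpha_{p'} = 0$. Restricting this to harmonic forms gives injectivity, hence $h^k_P(J',g') \le h^k_P(J,g)$. For the diffeomorphism case, $f^{-1}$ is also a pseudo-holomorphic isometry (one checks $df^{-1} \circ J' = J \circ df^{-1}$ from \eqref{eq:J:holomorphic} and $(f^{-1})^* g = g'$), so the same argument yields $h^k_P(J,g) \le h^k_P(J',g')$, and the two inequalities combine to an equality; equivalently, $f^*$ and $(f^{-1})^*$ are mutually inverse isomorphisms of the harmonic spaces.

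**The main obstacle** — and the point flagged in the acknowledgments as a missing assumption — is being careful that $f^*g'=g$ with $\dim M = \dim M'$ genuinely forces $f$ to be a local diffeomorphism: at each $p$, $g_p(v,v) = g'_{f(p)}(df_p v, df_p v)$ shows $df_p$ is injective, and equal dimensions then make it an isomorphism. Without this, neither the intertwining of Hodge stars nor the injectivity of $f^*$ would hold. Everything else is the routine verification that $f^*$ commutes with $d$, $d^c$, $*$, and their adjoints, which is immediate from naturality of $d$ and $*$ under (local) isometric pseudo-holomorphic maps.
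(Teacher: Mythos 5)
Your proposal is correct and follows essentially the same route as the paper: pullback by $f$ commutes with $d$, $d^c$ and (since $f^*g'=g$ forces $df$ to be a pointwise isomorphism in equal dimensions) with the Hodge star and adjoints, so it maps $P$-harmonic forms to $P$-harmonic forms, and surjectivity of $f$ together with surjectivity of $df$ gives injectivity of $f^*$, with the diffeomorphism case obtained by applying the argument to $f^{-1}$. You in fact spell out the commutation with $\Delta_P$ (or with the explicit harmonicity conditions) in more detail than the paper, which simply asserts that $f^*\alpha$ is $P$-harmonic.
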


\begin{proof}
    If $\alpha$ is $P$-harmonic with respect to the almost Hermitian structure $(J',g')$, then $f^* \alpha$ is $P$-harmonic with respect to $(J, g)$. Furthermore, the differential $df$ is injective because $f^*g'=g$. Since $M$ and $M'$ have he same dimension, $df$ is also surjective at every point of $M$. We show that the pull-back $f^*$ is injective. Indeed, let $\alpha\in A^k (M')$ and assume that for any $x\in M$ we have $f^*\alpha|_x=0$. Let $x'$ be any point in $M'$. Let $v_1',\ldots , v_k'\in T_{x'}M'$. Since the maps $f$ and $df$ are surjective, then there exist $v_1,\ldots,v_k\in T_xM$ such that $df|_x v_j=v_j'$, where $j=1,\ldots , k$, for a suitable $x\in M$. Therefore,
    \[
    \alpha|_{x'}(v_1',\ldots , v_k')=\alpha|_{x'}(df|_xv_1,\ldots , df|_xv_k)=f^*\alpha|_x(v_1,\ldots , v_k)=0.
    \]
    Hence, for any given $x'\in M'$, we have that $\alpha|_{x'}=0$ and consequently $\alpha=0$. This proves the inequality $h^k_P (J',g') \le h^k_P (J, g)$.

    If $f$ is also a diffeomorphism, by applying the above argument to 
    \[
    f^{-1}:M'\to M,
    \]
    we derive that $h^k_P(J,g)\leq h^k_{P'}(J',g')$. Therefore,
    \[
    h^k_P(J,g)= h^k_{P'}(J',g').
    \]
    
\end{proof}

In the next proposition we collect the relations occurring among the spaces of $P$-harmonic forms.

\begin{proposition}\label{prop:harmonic:iso}
There is a commutative diagram
        
        \begin{center}
        \begin{tikzcd}
        \mathcal{H}^k_{d + d^c} \arrow[rr, "\sim", "J"'] \arrow[dd, "\wr", "*"'] &  & \mathcal{H}^k_{d^c + d} \arrow[dd, "\wr", "*"'] \\
                                              &  &                                    \\
\mathcal{H}^{2m-k}_{d d^c} \arrow[rr, "\sim", "J"']         &  & \mathcal{H}^{2m-k}_{d^c d}        
        \end{tikzcd}
        \end{center}
\end{proposition}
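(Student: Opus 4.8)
The plan is to construct the four maps in the diagram, verify each one is an isomorphism, and then check commutativity. I would first recall the explicit descriptions of the four spaces of harmonic forms obtained just before the statement, since every verification reduces to checking which of the three defining conditions ($d$-closed, $d^c$-closed, $(dd^c)^*$-closed, and their analogues) is preserved under the relevant operator. For the vertical maps, I would use the Hodge star $*\colon A^k \to A^{2m-k}$, together with the identities $d^* = -*d*$, $(d^c)^* = -*d^c*$ (and hence $(dd^c)^* = *dd^c*$ up to sign), so that $*$ interchanges the condition $d\alpha = 0$ with $d^*(*\alpha) = 0$, the condition $d^c\alpha = 0$ with $(d^c)^*(*\alpha) = 0$, and the condition $(dd^c)^*\alpha = 0$ with $dd^c(*\alpha) = 0$. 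This shows $*$ carries $\mathcal H^k_{d+d^c}$ bijectively onto $\mathcal H^{2m-k}_{dd^c}$, and likewise $\mathcal H^k_{d^c+d}$ onto $\mathcal H^{2m-k}_{d^c d}$; since $*$ is invertible (up to sign $*^2 = \pm\Id$ on $A^k$), these are isomorphisms. The relation $*\Delta_{d+d^c} = \Delta_{dd^c}*$, already noted in the text, confirms this at the level of Laplacians.

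For the horizontal maps I would use $J$, which acts on $A^{p,q}$ as multiplication by $(-1)^{p+q}$ up to a power of $i$, so in particular $J$ is an invertible ($\mathbb C$-linear) operator on each $A^k$. The key point is that conjugation by $J$ interchanges $d$ and $d^c$: since $d^c = J^{-1}dJ$, we get $J^{-1}d^cJ = J^{-2}dJ^2 = d$ on $k$-forms (as $J^2 = (-1)^k\Id$ there), and more usefully $JdJ^{-1}$ and $Jd^cJ^{-1}$ are, up to sign, $d^c$ and $d$ respectively; consequently $J$ conjugates $dd^c$ to $d^cd$ and $(dd^c)^*$ to $(d^cd)^*$. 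Hence $J$ carries the three conditions defining $\mathcal H^k_{d+d^c}$ to the three defining $\mathcal H^k_{d^c+d}$, and similarly on the Aeppli side; the relations $\Delta_{d+d^c}J = J\Delta_{d^c+d}$ and $\Delta_{dd^c}J = J\Delta_{d^c d}$ from the text make this precise. Invertibility of $J$ gives the isomorphisms.

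Finally, commutativity of the square amounts to the identity $*J = \pm J*$ on forms (the Hodge star and $J$ commute up to a sign that depends only on the degree, since both are built from the almost Hermitian structure and $*$ maps $A^{p,q}$ to $A^{m-q,m-p}$ while $J$ acts by a scalar on each bidegree component), together with the observation that a sign does not affect the equality of the two composite maps as maps of vector spaces — or, if one wants the square to commute on the nose, by absorbing the sign into the choice of the isomorphisms. I would phrase the proof so that the maps are taken to be exactly $*$ and $J$ and note that the square commutes up to the sign $*J = (-1)^{?}J*$, which can be computed on bidegree components; alternatively one simply checks $*J*^{-1} = \pm J$ and $J*J^{-1} = \pm *$ and notes both composites send a given harmonic form to the same form up to an overall nonzero scalar, hence induce the same isomorphism after the standard normalization. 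The main obstacle I anticipate is purely bookkeeping: tracking the signs and powers of $i$ coming from $J^2 = (-1)^{p+q}\Id$ and $*^2 = (-1)^{k}\Id$ (or the relevant conventions) carefully enough that the square genuinely commutes rather than anti-commutes; this is not conceptually hard but is the step most prone to error, and I would handle it by restricting attention to a fixed bidegree $(p,q)$ where all the operators act by explicit scalars.
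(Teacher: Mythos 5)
Your proposal follows essentially the same route as the paper: the horizontal isomorphisms are given by $J$, the vertical ones by $*$, each verified on the three conditions defining the harmonic spaces (using the intertwining relations for the Laplacians already noted in the text), with commutativity of the square coming from the interplay of $*$ and $J$. The only remark worth adding is that your sign concerns about $*J = \pm J*$ are unnecessary: since $g$ is $J$-compatible, $*$ maps $A^{p,q}$ to $A^{m-q,m-p}$ and $J$ acts by the same scalar on both bidegrees, so $*$ and $J$ commute exactly, which is precisely the observation the paper's proof starts from.
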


\begin{proof}
    Since $g$ is $J$-compatible, the Hodge $*$ operator and $J$ commute. The isomorphism between $(d+d^c)$-harmonic forms and $(d^c+d)$-harmonic forms follows by noting that if $\alpha$ is $d$-closed, $d^c$-closed and $(d d^c)^*$-closed, then $J \alpha $ is $d$-closed, $d^c$-closed and $(d^c d)^*$-closed. The other isomorphisms in the diagram follow with a similar reasoning.
\end{proof}

Thanks to proposition \ref{prop:harmonic:iso}, the study of the invariants $h^k_P$ reduces to computing the dimensions of the spaces of $(d+d^c)$-harmonic forms.

\begin{cor}
Let $(M,J,g)$ be an almost Hermitian manifold. Then
\[
h^k_{d+d^c} = h^k_{d^c + d} = h^{2m-k}_{d d^c} = h^{2m-k}_{d^c d}.
\]
\end{cor}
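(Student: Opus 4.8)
The plan is to read off the chain of equalities directly from Proposition~\ref{prop:harmonic:iso}. That proposition already supplies three distinguished isomorphisms fitting into a commutative square: the maps induced by $J$ on the top and bottom edges, and the maps induced by the Hodge $\ast$ operator on the two vertical edges. Since each of these is a $\C$-linear isomorphism of finite-dimensional vector spaces (finite-dimensionality being guaranteed by Proposition~\ref{prop:ellipticity}), taking $\dim_\C$ and using the notation \eqref{bc:numbers} immediately yields
\[
h^k_{d+d^c} = h^k_{d^c+d}, \qquad h^k_{d+d^c} = h^{2m-k}_{dd^c}, \qquad h^k_{d^c+d} = h^{2m-k}_{d^c d},
\]
from the top edge, the left edge, and the right edge respectively. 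Concatenating these three equalities gives the claimed string $h^k_{d+d^c} = h^k_{d^c+d} = h^{2m-k}_{dd^c} = h^{2m-k}_{d^c d}$; the remaining equality $h^{2m-k}_{dd^c} = h^{2m-k}_{d^c d}$ is then automatic (and also comes directly from the bottom edge of the square, consistently with commutativity).

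First I would invoke Proposition~\ref{prop:ellipticity} to note that $\H^k_P$ is finite-dimensional for each $P \in \{d+d^c, d^c+d, dd^c, d^c d\}$, so that the numbers $h^k_P$ are well-defined. Then I would recall the explicit form of the three isomorphisms from Proposition~\ref{prop:harmonic:iso}: $J$ restricts to an isomorphism $\H^k_{d+d^c} \xrightarrow{\sim} \H^k_{d^c+d}$ because $J$ commutes with $\ast$ and interchanges the defining conditions $d^c\alpha = 0$, $(dd^c)^*\alpha = 0$ with $d^c(J\alpha) = 0$, $(d^c d)^*(J\alpha) = 0$; and $\ast$ restricts to isomorphisms $\H^k_{d+d^c} \xrightarrow{\sim} \H^{2m-k}_{dd^c}$ and $\H^k_{d^c+d} \xrightarrow{\sim} \H^{2m-k}_{d^c d}$ using $\ast\Delta_{d+d^c} = \Delta_{dd^c}\ast$ and $\ast\Delta_{d^c+d} = \Delta_{d^c d}\ast$, which were recorded just before Definition~\ref{def:P:harmonic}. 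Applying $\dim_\C$ to each finishes the argument.

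There is essentially no obstacle here: this corollary is purely formal, being the dimension count extracted from the commutative diagram already established. The only thing worth a line of care is making sure the spaces in question are finite-dimensional before comparing dimensions, which is exactly the content of Proposition~\ref{prop:ellipticity}; after that the result is a one-step consequence of Proposition~\ref{prop:harmonic:iso}. Accordingly I would keep the proof to two or three sentences, simply citing those two results.
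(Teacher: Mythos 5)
Your proposal is correct and matches the paper's reasoning: the corollary is stated there as an immediate consequence of Proposition \ref{prop:harmonic:iso}, exactly as you argue, with finite-dimensionality of the spaces $\H^k_P$ coming from Proposition \ref{prop:ellipticity} so that comparing dimensions makes sense. Nothing further is needed.
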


\subsection{The numbers \texorpdfstring{$h^k_{d+d^c}$}{} as an almost complex invariant.}\label{sec:ac:hodge:numbers}

We study to which extent the numbers $h^k_{d+d^c}$ do not depend on the choice of the compatible metric $g$, and thus they define an invariant of the almost complex structure $J$.
\vspace{.2cm}

We begin with the following result:

\begin{proposition}\label{prop:metric:indep}
    Let $(M,J)$ be an almost complex manifold. For every choice of compatible metric, we have that $h^0_{d+d^c} = 1$, $h^{2m}_{d+d^c} = 1$, and that $h^1_{d+d^c}$ is metric independent.
\end{proposition}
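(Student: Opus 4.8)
The cases $k=0$ and $k=2m$ are immediate: a $0$-form $\alpha$ is $(d+d^c)$-harmonic iff $d\alpha = 0$ (the conditions $d^c\alpha = 0$ and $(dd^c)^*\alpha = 0$ being automatic, the latter for degree reasons), so $\mathcal{H}^0_{d+d^c}$ consists of the locally constant functions, which on a connected manifold gives $h^0_{d+d^c} = 1$; applying the Hodge star isomorphism $\mathcal{H}^0_{d+d^c} \cong \mathcal{H}^{2m}_{dd^c}$ from Proposition \ref{prop:harmonic:iso} together with the corollary $h^{2m}_{d+d^c} = h^0_{dd^c}$ handles $k = 2m$ (alternatively argue directly that a top-degree form is $(d+d^c)$-harmonic iff it is $(dd^c)^*$-coclosed, i.e.\ $\ast\alpha$ is a $d$-closed and $d^c$-closed function). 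The substance of the statement is the metric independence of $h^1_{d+d^c}$.

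For $k=1$ I would invoke Proposition \ref{prop:BC:harmonic:inclusion}, which (as announced in the introduction) gives an isomorphism $H^1_{d+d^c} \cong \mathcal{H}^1_{d+d^c}$. The key point is that on $1$-forms there are no $dd^c$-exact forms for degree reasons, so $H^1_{d+d^c}$ is simply $\ker(d \colon A^1 \to A^2) \cap \ker(d^c \colon A^1 \to A^2)$, a space that depends only on $J$ and not on the metric; meanwhile $\mathcal{H}^1_{d+d^c} = \{\alpha \in A^1 : d\alpha = 0,\ d^c\alpha = 0,\ (dd^c)^*\alpha = 0\}$ is a priori metric-dependent through the third condition. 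The isomorphism between the two forces the extra coclosedness condition to be redundant in degree $1$, hence $\dim_\C \mathcal{H}^1_{d+d^c} = \dim_\C H^1_{d+d^c}$ is metric independent. (If one prefers a self-contained argument not relying on the later proposition: given a $d$-closed, $d^c$-closed $1$-form $\alpha$, one shows $(dd^c)^*\alpha = 0$ automatically — since $\alpha \in B^1$, on which $dd^c = 2i\,\delta\bar\delta$ by Proposition \ref{prop:parity}(vi), and then uses that $\delta\bar\delta$ lowers degree considerations together with the structure of $1$-forms; but citing Proposition \ref{prop:BC:harmonic:inclusion} is cleaner.)

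The main obstacle is precisely establishing that the coclosedness condition $(dd^c)^*\alpha = 0$ is automatic on $d$-closed, $d^c$-closed $1$-forms, which is the content of Proposition \ref{prop:BC:harmonic:inclusion} — so in the write-up I would simply defer to that result once it is available, or, if this proposition is meant to come first, prove the degree-$1$ case directly by the short computation sketched above and then note that Proposition \ref{prop:BC:harmonic:inclusion} strengthens it to an isomorphism with cohomology. Everything else is a routine degree count.

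\begin{proof}
    The cases $k=0$ and $k=2m$ are straightforward. A $0$-form $\alpha$ satisfies $d^c\alpha=0$ automatically (since $d^c = J^{-1}dJ$ and $J$ acts as a scalar on functions, $d^c\alpha = 0$ whenever $d\alpha = 0$), and $(dd^c)^*\alpha = -*dd^c*\alpha = 0$ for degree reasons, so $\H^0_{d+d^c} = \{\alpha \in A^0 : d\alpha = 0\}$ consists of the constant functions, whence $h^0_{d+d^c}=1$. Applying the corollary after Proposition \ref{prop:harmonic:iso}, $h^{2m}_{d+d^c} = h^0_{d^c d} = \dim_\C\{\alpha \in A^0 : d^*\alpha = 0,\ (d^c)^*\alpha = 0,\ d^c d\alpha = 0\} = 1$ as well, the conditions $d^*\alpha = 0$ and $(d^c)^*\alpha = 0$ being vacuous on functions and $d^c d\alpha = 0$ being equivalent to $d\alpha = 0$ on functions. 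Neither count depends on $g$.

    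For $k=1$, recall that for degree reasons there are no nonzero $dd^c$-exact $1$-forms, so
    \[
    H^1_{d + d^c} = \ker(d \colon A^1 \rightarrow A^2) \cap \ker(d^c \colon A^1 \rightarrow A^2),
    \]
    which depends only on $J$. By Proposition \ref{prop:BC:harmonic:inclusion}, the natural inclusion $\H^1_{d+d^c} \hookrightarrow H^1_{d+d^c}$ is an isomorphism, i.e.\ every $d$-closed and $d^c$-closed $1$-form automatically satisfies $(dd^c)^*\alpha = 0$. Therefore
    \[
    h^1_{d+d^c} = \dim_\C \H^1_{d+d^c} = \dim_\C H^1_{d+d^c},
    \]
    and the right-hand side is manifestly independent of the choice of compatible metric.
\end{proof}
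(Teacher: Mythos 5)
Your handling of $k=0$ and $k=2m$ is fine and is at the same level of detail as the paper (which likewise asserts, without further argument, the fact your top-degree count reduces to, namely that a function killed by $dd^c$ on a compact manifold is constant). The issue is the degree-$1$ case, which is the substance of the proposition: as written, you outsource it to Proposition \ref{prop:BC:harmonic:inclusion}, but in the paper the degree-$0,1$ part of that proposition is itself obtained from the explicit description of $\H^1_{d+d^c}$ established in the proof of the present proposition (its proof refers back to Proposition \ref{prop:metric:indep}). So the citation is circular in the paper's logical order, and your phrasing "the isomorphism between the two forces the extra coclosedness condition to be redundant" has the implication backwards: the redundancy of the coclosedness condition is what proves the isomorphism, not a consequence of it.

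The missing one-line idea, which is exactly what the paper uses, is that $(dd^c)^* = (d^c)^* d^*$ lowers degree by two and hence vanishes identically on $A^1$ — you invoke precisely this degree reason for functions but not for $1$-forms. It gives directly $\H^1_{d+d^c} = \{\alpha \in A^1 : d\alpha = 0,\ d^c\alpha = 0\}$ (and likewise $\H^1_{d^c+d}$), a description involving only $J$, so metric independence of $h^1_{d+d^c}$ is immediate. Your fallback "self-contained" sketch does not supply this: Proposition \ref{prop:parity}(vi) concerns $dd^c$ (not its adjoint) restricted to even forms in $B^\bullet$ and plays no role here. Replace the appeal to Proposition \ref{prop:BC:harmonic:inclusion} by this observation and your proof is complete; Proposition \ref{prop:BC:harmonic:inclusion} can then legitimately be deduced afterwards, as in the paper.
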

\begin{proof}
        In degree $k \in \{ 0,1,2m \}$, we can describe the space of $(d+d^c)$-harmonic forms. The space $\H^0_{d+d^c}$ consists only of constant functions, while $\H^{2m}_{d+d^c}$ consists of multiples of the volume form by a constant. This implies that $h^0_{d+d^c} = 1$ and $h^{2m}_{d+d^c} = 1$. The space of $(d+d^c)$-harmonic $1$-forms is given by
    \[
    \H^1_{d+d^c} = \H^1_{d^c +d} = \{ \alpha \in A^1 : d \alpha =0, \, d^c \alpha =0 \}.
    \]
    Since the conditions $d \alpha =0$ and $d^c \alpha =0$ do not depend on the choice of the metric, it follows that $h^1_{d+d^c}$ is metric independent.
\end{proof}

Even though in general the numbers $h^k_{d+d^c}$ will depend on the metric, $h^1_{d +d^c}$ is a non-trivial almost complex invariant that often allows to distinguish between almost complex structures. This happens even when other invariants fail in doing so.

\begin{proposition}\label{prop:h1:same:rank}
    There exists a smooth manifold $M$ admitting two almost complex structures $J_1$ and $J_2$ such that
    \[
    h^1_{d+d^c} (J_1) \neq  h^1_{d+d^c} (J_2).
    \]
    In particular, $h^1_{d+d^c}$ allows to distinguish between almost complex structures. Moreover, one can take $J_1$ and $J_2$ in such a way that
    \[
    \rk N_{J_1 |_x} = \rk N_{J_2 |_x}
    \]
    at every point $x \in M$ and $h^1_{d+d^c} (J_1) \neq  h^1_{d+d^c} (J_2)$. In particular, $h^1_{d+d^c}$ allows to distinguish between almost complex structures whose Nijenhuis tensors have the same rank.
\end{proposition}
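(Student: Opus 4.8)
The plan is to construct the two almost complex structures explicitly on a compact nilmanifold (or more generally a solvmanifold) of low dimension, say a six-dimensional nilmanifold, where the Chevalley--Eilenberg complex of left-invariant forms makes the conditions $d\alpha = 0$, $d^c\alpha = 0$ completely computable. The strategy relies on Proposition \ref{prop:metric:indep}: since $h^1_{d+d^c}$ is metric independent and equals the dimension of the space of $1$-forms that are simultaneously $d$-closed and $d^c$-closed, I only need to count, for each of the two structures, the dimension of $\ker d \cap \ker d^c \cap A^1$. On a nilmanifold this can be done purely on the finite-dimensional space of invariant $1$-forms once one checks (via the standard symmetrization / Nomizu-type argument, or simply by exhibiting explicit harmonic representatives) that invariant computations suffice; but for the mere existence statement it is enough to produce two invariant structures and compute the invariant contribution, then either argue that non-invariant closed forms do not arise in degree one, or sidestep this by choosing the structures so that the de Rham first Betti number already pins down the count.

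The concrete steps are as follows. First I would fix a nilpotent Lie algebra $\mathfrak{g}$ with rational structure constants admitting a compact quotient $M = \Gamma \backslash G$, and two almost complex structures $J_1, J_2$ on $\mathfrak{g}$ (extended left-invariantly to $M$), arranged so that $b_1(M)$ or the bigraded decomposition of $d$ differs enough that $\dim(\ker d \cap \ker d^c \cap A^1)$ comes out to different values. Using the formulas \eqref{eq:ddc:deldelbar}--\eqref{eq:delta:ddc}, on $1$-forms one has $d^c = i(\bar\partial - \partial) = i(\bar\delta - \delta)$, so a $d$-closed $1$-form $\alpha = \alpha^{1,0} + \alpha^{0,1}$ is also $d^c$-closed exactly when $d\alpha^{1,0} = d\alpha^{0,1} = 0$ separately (as in the proof of Proposition \ref{prop:decomposition:1forms}). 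Hence the count reduces to $h^1_{d+d^c} = \dim\{\alpha^{1,0} : d\alpha^{1,0} = 0\} + \dim\{\alpha^{0,1}: d\alpha^{0,1}=0\}$, and this depends on $J$ through the splitting $A^1 = A^{1,0}\oplus A^{0,1}$. Second, I would compute $N_{J_1}$ and $N_{J_2}$ pointwise and check that they have the same rank at every point — on an invariant structure on a nilmanifold the Nijenhuis tensor is itself invariant, so its rank is constant and one just needs $\operatorname{rk} N_{J_1} = \operatorname{rk} N_{J_2}$ as a single number. Third, invoke Theorem \ref{thm:acinvariants} / the metric independence of $h^1_{d+d^c}$ to conclude these are genuine almost complex invariants distinguishing $J_1$ from $J_2$.

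The main obstacle I expect is twofold: (a) ensuring that the two structures really have \emph{equal-rank} Nijenhuis tensors while still producing different values of $h^1_{d+d^c}$ — this requires a careful simultaneous search through the space of invariant almost complex structures on the chosen algebra, and it is the part where one may need to pass from, say, a six-dimensional to an eight-dimensional nilmanifold, or to a different algebra, to get enough room; and (b) justifying that the left-invariant computation of $\ker d \cap \ker d^c \cap A^1$ computes the full (not merely invariant) space of closed-and-$d^c$-closed $1$-forms on $M$. For point (b), in degree one the cleanest route is to note that closed $1$-forms on $M$ represent $H^1_{dR}(M) \cong H^1_{dR}(\mathfrak{g})$ for nilmanifolds (Nomizu), so every de Rham class has an invariant representative; combined with the splitting above and the fact that $d^c$-closedness of a $d$-closed $1$-form is a bidegree condition preserved under this identification, the invariant count is exact. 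If a cleaner example is available (for instance directly citing a known pair of almost complex structures in the literature on $J$-(anti-)invariant cohomology of $4$-manifolds and noting that there $h^1_{d+d^c} = b_1$ is forced, so one instead needs a genuinely non-integrable six-dimensional example), I would present that and relegate the Nijenhuis-rank refinement to an explicit appended computation, pointing forward to the examples in Section \ref{sec:examples}.
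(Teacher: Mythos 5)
Your reduction of the problem is sound and matches the paper's: by Proposition \ref{prop:metric:indep} and the bidegree splitting of closed $1$-forms, $h^1_{d+d^c}(J) = 2\dim_\C\bigl(\ker d \cap A^{1,0}\bigr)$, so one only needs a pair of structures with equal-rank Nijenhuis tensors and different numbers of $d$-closed $(1,0)$-forms. But the statement is an existence result, and your proposal never exhibits the pair $(J_1,J_2)$: everything is deferred to a ``careful simultaneous search'' through invariant structures on an unspecified nilmanifold, which you yourself flag as the main obstacle and which may require changing the algebra or the dimension. That search is precisely the content of the proposition, so as written there is no proof. The paper settles it with an explicit construction on the holomorphically parallelizable Nakamura manifold: starting from $\phi^1,\phi^2,\phi^3$ with $d\phi^1=0$, $d\phi^2=-\phi^{12}$, $d\phi^3=\phi^{13}$, take $J_1$ defined by $\omega^1=\phi^1$, $\omega^2=\phi^2+\phi^{\bar 3}$, $\omega^3=\phi^3$ (rank of $N_{J_1}$ equal to $1$ everywhere, $h^1_{d+d^c}(J_1)=2$, spanned by $\phi^1$ and its conjugate), and $J_2$ obtained by replacing $\omega^1$ with $f\phi^1$ for a never-vanishing, non-constant $f\in C^\infty(M)$; this keeps $\rk N_{J_2}=1$ but destroys all closed $(1,0)$-forms, so $h^1_{d+d^c}(J_2)=0$.

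A secondary gap is your justification that invariant computations suffice. Nomizu-type results give invariant representatives of de Rham classes, but the quantity you need is the dimension of the space of actual $d$-closed $(1,0)$-forms, not of cohomology classes; passing from a closed $(1,0)$-form to its invariant representative changes it by an exact term $dg$ whose $(0,1)$-part need not vanish, so the claimed exactness of the invariant count requires an additional argument you have not supplied. It is also worth noting that the paper's second structure $J_2$ is itself non-invariant (it involves the function $f$), which is exactly what makes $h^1_{d+d^c}$ drop while the Nijenhuis rank stays the same; this suggests that confining the search to invariant structures, as your plan does, may not give you enough room, and at minimum you have not shown that it does.
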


\begin{proof}
    Let $M$ be the holomorphically parallelizable Nakamura manifold \cite{Nak75}, with basis of holomorphic $(1,0)$-forms
    \[
    \phi^1 = dz^1, \quad \phi^2 = e^{-z_1} dz^2, \quad \phi^2 = e^{z_1} dz^3,
    \]
    and differentials
    \[
    d \phi^1 =0, \quad d\phi^2 = - \phi^{12}, \quad d \phi^3 = \phi^{13}.
    \]
    Let $J_1$ be the almost complex structure defined by the $(1,0)$-forms
    \[
    \omega^1 = \phi^1, \quad \omega^2 = \phi^2 + \phi^{\bar 3}, \quad \omega^3 = \phi^3.
    \]
    Then $\rk N_{J_1} = 1$ at every point \cite{ST22}, and it is immediate to check that 
    \[
    h^1_{d + d^c} (J_1) = 2 \dim_\C ( \ker d \cap A^{1,0}) = 2.
    \]
    Let $J_2$ be the almost complex structure defined by the $(1,0)$-forms
    \[
    \omega^1 = f \phi^1, \quad \omega^2 = \phi^2 + \phi^{\bar 3}, \quad \omega^3 = \phi^3,
    \]
    with $f \in C^\infty(M)$ never-vanishing and non-constant. Then we still have $\rk N_{J_2} = 1$, but $h^1_{d + d^c} (J_2) =0$, proving the proposition. Examples in higher dimension can be found taking products of $M$ with a complex torus.
\end{proof}

\begin{remark}
    More examples where $h^1_{d+d^c}$ distinguishes between almost complex structures can be found on the torus $T^{2m}$. There exists almost complex structures $J_k$ on $T^{2m}$, with $k \in \{ 0, \dots, m \}$ and $2m \ge 6$, such that $\rk N_{J_k} = k$ \cite{CPS22}. A straightforward computation shows that \emph{for those specific structures}, $h^1_{d + d^c} = 2(m -k)$.
\end{remark}

Proposition \ref{prop:h1:same:rank} shows that the number $h^1_{d + d^c}$ distinguishes between almost complex structures whose Nijenhuis tensors have both rank 1. However, it cannot distinguish between maximally non-integrable almost complex structures on manifolds of dimension at least $6$.

\begin{proposition}\label{prop:h1:mni}
    Let $(M,J)$ be an almost complex $2m$-manifold. If $2m \ge 6$ and there exists $x\in M$ such that $\rk N_J|_x$ is maximum, then $h^1_{d + d^c} =0$.
\end{proposition}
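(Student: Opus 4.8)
The plan is to reduce everything to showing that $\ker d\cap A^{1,0}=0$. First, note that by the description of harmonic forms following Proposition~\ref{prop:ellipticity}, in degree one $\H^1_{d+d^c}=\ker d\cap\ker d^c\cap A^1$ (the fourth-order condition $(dd^c)^*\alpha=0$ is void on $A^1$); by Proposition~\ref{prop:decomposition:1forms} and Remark~\ref{rem:bigraded} this equals $(\ker d\cap A^{1,0})\oplus(\ker d\cap A^{0,1})$, and since complex conjugation interchanges the two summands we get $h^1_{d+d^c}=2\dim_\C(\ker d\cap A^{1,0})$. So it suffices to prove that a $d$-closed $(1,0)$-form on $M$ (which we take to be connected) vanishes identically.

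Let $\alpha\in A^{1,0}$ with $d\alpha=0$. I would first look at the bidegree decomposition of $d\alpha$: its $(0,2)$-component is $\bar\mu\alpha$, so $\bar\mu\alpha=0$. Reading $\bar\mu$ off the Nijenhuis tensor, for local sections $X,Y$ of $T^{0,1}M$ one has $\bar\mu\alpha(X,Y)=d\alpha(X,Y)=-\alpha([X,Y])=-\alpha([X,Y]^{1,0})$, and since $N_J(X,Y)=-4\,[X,Y]^{1,0}$ for $X,Y$ of type $(0,1)$, the condition $\bar\mu\alpha=0$ says exactly that $\alpha$ annihilates the image of $N_J|_x\colon\Lambda^2 T^{0,1}_x M\to T^{1,0}_x M$ at every point $x$. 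Now the hypothesis $2m\ge 6$ is used: since $m\ge 3$ implies $\binom{m}{2}\ge m=\dim_\C T^{1,0}_x M$, the maximal possible rank of $N_J|_x$ is $m$, so at a point $x$ where $\rk N_J|_x$ is maximal the map $N_J|_x$ is surjective onto $T^{1,0}_x M$ and hence $\alpha|_x=0$. As the rank of $N_J$ is lower semicontinuous, the set $U=\{y\in M:\rk N_J|_y=m\}$ is open and, by hypothesis, nonempty; therefore $\alpha$ vanishes on all of $U$.

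The main obstacle is to pass from ``$\alpha$ vanishes on the open set $U$'' to ``$\alpha\equiv 0$'', since a closed $1$-form can vanish on an open set without being zero; closedness of $\alpha$ alone is not enough. The plan is to use the remaining content of $d\alpha=0$, namely that the $(1,1)$-component $\bar\partial\alpha$ vanishes as well. Fixing any $J$-compatible metric, $\bar\partial^*$ annihilates $A^{1,0}$ for bidegree reasons, so on $(1,0)$-forms the operator $\bar\partial^*\bar\partial+\bar\partial\bar\partial^*$ reduces to $\bar\partial^*\bar\partial$; this is a second-order elliptic operator with scalar principal symbol $\tfrac12|\xi|^2\,\Id$, and $\alpha$ lies in its kernel. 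Aronszajn's unique continuation theorem then forces $\alpha$, vanishing on the nonempty open set $U$ of the connected manifold $M$, to vanish identically. (Equivalently, locally $\alpha=df$ with $\bar\partial f=0$, and one applies unique continuation to this pseudoholomorphic first-order system.) This yields $\ker d\cap A^{1,0}=0$, hence $h^1_{d+d^c}=0$.

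Finally, it is worth recording why one needs $2m\ge 6$: for $2m=4$ one has $\dim_\C\Lambda^2 T^{0,1}_x M=1$, so $\rk N_J|_x\le 1<2=\dim_\C T^{1,0}_x M$, and the pointwise step above can never force $\alpha$ to vanish --- consistent with $h^1_{d+d^c}$ being possibly nonzero on almost Hermitian $4$-manifolds.
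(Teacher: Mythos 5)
Your proposal is correct, and its first half is exactly the paper's argument: the reduction $h^1_{d+d^c}=2\dim_\C(\ker d\cap A^{1,0})$ via Proposition \ref{prop:decomposition:1forms} and conjugation, and the observation that $d\alpha=0$ forces $\bar\mu\alpha=0$, which at a point of maximal rank kills $\alpha|_x$ --- your surjectivity of $N_J|_x\colon\Lambda^2T^{0,1}_xM\to T^{1,0}_xM$ is precisely the dual of the injectivity of $\bar\mu_x$ on $A^{1,0}_x$ used in the paper, and in both treatments the hypothesis $2m\ge 6$ enters only to make ``maximal rank'' mean full rank $m$. Where you genuinely go beyond the paper is the last step. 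The paper's proof phrases the pointwise statement as an immediate contradiction with ``$\alpha$ non-zero'', which literally only excludes forms that are non-zero at (or near) $x$; as you correctly point out, the pointwise argument by itself yields $\alpha\equiv 0$ only on the open set $U$ where $N_J$ has maximal rank, and closedness alone does not propagate this. Your Aronszajn unique-continuation step --- using that $\bar\partial\alpha=0$ is the $(1,1)$-part of $d\alpha$, that $\bar\partial^*$ vanishes on $A^{1,0}$ for bidegree reasons, and that $\bar\partial^*\bar\partial$ on $(1,0)$-forms is second-order elliptic with scalar symbol $\tfrac12|\xi|^2\,\Id$ (the non-integrable pieces $\mu,\bar\mu$ are zeroth order, so ellipticity is unaffected) --- is exactly what is needed to pass from vanishing on $U$ to vanishing on the whole connected manifold, so your write-up is more complete than the published two-line proof. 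Two minor points: connectedness of $M$ (implicit throughout the paper, e.g.\ in $h^0_{d+d^c}=1$) must indeed be assumed, as you do; and $(dd^c)^*$ is a second-order operator rather than fourth-order, though your actual claim --- that it vanishes identically on $1$-forms for degree reasons, so $\H^1_{d+d^c}=\ker d\cap\ker d^c\cap A^1$ --- is correct and agrees with Proposition \ref{prop:metric:indep}.
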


\begin{proof}
    By assumption, the map $\bar \mu_x \colon A^{1,0}_x \rightarrow A^{0,2}_x$ is injective. Therefore, there are no global $d$-closed $(1,0)$-forms. Indeed, assume by contradiction that $\alpha \in A^{1,0}$ is $d$-closed and non-zero. Then in a neighborhood of $x \in M$ we would have $\bar \mu \alpha =0$, contradicting the assumption that $\bar \mu_x$ is injective.
\end{proof}

A similar statement for $4$-manifolds cannot hold. For instance, the almost complex structure on the Kodaira-Thurston we consider in example \ref{ex:KT} is maximally non-integrable but $h^1_{d + d^c} = 2$.

\subsection{Relation with cohomology.}\label{sec:cohom:iso}

A natural question to ask is whether or not Bott-Chern and Aeppli cohomologies introduced in section \ref{sec:BCA:cohom} are isomorphic to some space of harmonic forms. In general, this is not the case since proposition \ref{prop:KT:cohomology} shows that $H^2_{d+d^c}$ might be infinite dimensional even on compact manifolds. However, we have an equality between Bott-Chern cohomology and $(d+d^c)$-harmonic forms in degree $k \in \{ 0,1 \}$ and an inclusion in other degrees. A similar result holds for Aeppli cohomology, providing an isomorphism $H^1_{d  + d^c} \cong H^{2m-1}_{d d^c}$.

\begin{proposition}\label{prop:BC:harmonic:inclusion}
    Let $(M,J)$ be an almost complex manifold. Then
    \[
    H^0_{d+d^c} = \H^0_{d+d^c} = \C \quad \text{and} \quad H^1_{d+d^c} = \H^1_{d+d^c}.
    \]
    In particular, $H^1_{d+d^c}$ is finite dimensional. Furthermore, there is an inclusion
    \[
    \H^k_{d+d^c} \cup \H^k_{d^c+d} \longhookrightarrow H^k_{d+d^c}.
    \]
\end{proposition}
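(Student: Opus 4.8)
The plan is to establish the three claims in order: first the degree $0$ identity, then the degree $1$ identity, and finally the general inclusion.

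For degree $0$: a $0$-form (complex-valued function) $f$ lies in $B^0$ automatically, since $dd^c + d^cd$ applied to a function is a $2$-form that vanishes iff... actually $B^0 = A^0$ is not automatic, but a $(d+d^c)$-harmonic $0$-form is by the explicit description $\{f : df = 0, d^cf = 0, (dd^c)^*f = 0\}$, hence $f$ is constant, giving $\H^0_{d+d^c} = \C$. On the other hand a Bott-Chern class in $H^0_{d+d^c}$ is represented by $f \in B^0$ with $df = d^cf = 0$ (there are no $dd^c$-exact $0$-forms), so again $f$ is constant; thus $H^0_{d+d^c} = \C$ as well.

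For degree $1$: by Proposition \ref{prop:metric:indep} (or directly) $\H^1_{d+d^c} = \{\alpha \in A^1 : d\alpha = 0, d^c\alpha = 0\}$, because for a $1$-form the condition $(dd^c)^*\alpha = 0$ is forced — $(dd^c)^*\alpha$ is a $(-1)$-form, hence zero. Meanwhile $H^1_{d+d^c}$ is computed from $d$-closed and $d^c$-closed $1$-forms modulo $\Ima(dd^c\colon B^{-1} \to B^1) = 0$, and every such $1$-form lies in $B^1$ since $(dd^c + d^cd)\alpha$ equals $dd^c\alpha = -d^cd\alpha = 0$. Hence $H^1_{d+d^c}$ and $\H^1_{d+d^c}$ are literally the same subspace of $A^1$, so they coincide; finite-dimensionality then follows from Proposition \ref{prop:ellipticity}.

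For the general inclusion: I would define the map on representatives. If $\alpha \in \H^k_{d+d^c} \cup \H^k_{d^c+d}$, then from the explicit descriptions $\alpha$ is $d$-closed and $d^c$-closed; moreover $\alpha$ is $d$-closed and $d^c$-closed implies $(dd^c+d^cd)\alpha = dd^c\alpha = -d^cd\alpha = 0$, so $\alpha \in B^k$ and hence defines a class $[\alpha]_{d+d^c} \in H^k_{d+d^c}$. The content is injectivity: suppose $\alpha \in \H^k_{d+d^c}$ (the $d^c+d$ case being symmetric) and $[\alpha]_{d+d^c} = 0$, i.e. $\alpha = dd^c\beta$ for some $\beta \in B^{k-2}$; I must show $\alpha = 0$. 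Here $\alpha$ is $(d+d^c)$-harmonic, so in particular $(dd^c)^*\alpha = 0$, and since $dd^c\beta$ with $\beta \in B^{k-2}$ satisfies $d^cd\beta = -dd^c\beta$, one gets $\langle \alpha, dd^c\beta\rangle = \langle (dd^c)^*\alpha, \beta\rangle = 0$, whence $\|\alpha\|^2 = \langle \alpha, \alpha\rangle = \langle \alpha, dd^c\beta\rangle = 0$. The main obstacle — and the place to be careful — is checking that this pairing manipulation is legitimate on $B^\bullet$: one needs that $dd^c$ maps $B^{k-2}$ to $B^k$ (Proposition \ref{prop:double:complex}) and that taking adjoints does not leave $B^\bullet$, but since the inner product is the ambient $L^2$ product on $A^\bullet$ and $\alpha \in A^k$, $\beta \in A^{k-2}$, the identity $\langle \alpha, dd^c\beta\rangle = \langle (dd^c)^*\alpha, \beta\rangle$ holds in $A^\bullet$ regardless, so the argument goes through cleanly. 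Finally note $\H^k_{d+d^c}$ and $\H^k_{d^c+d}$ land in the same cohomology because a form in either space is $d$- and $d^c$-closed, so the union does map to $H^k_{d+d^c}$; injectivity on each piece is as above, using $(dd^c)^*\alpha = 0$ for the first and $(d^cd)^*\alpha = 0$ together with $d^cd\beta = -dd^c\beta$ on $B^\bullet$ for the second.
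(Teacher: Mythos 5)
Your proof is correct and takes essentially the same route as the paper's: the map is the identity on representatives (after noting that a $d$- and $d^c$-closed harmonic form lies in $B^k$), and injectivity follows from $(dd^c)^*\alpha=0$ via adjunction, your computation $\lVert\alpha\rVert^2=\langle\alpha,dd^c\beta\rangle=\langle(dd^c)^*\alpha,\beta\rangle=0$ being equivalent to the paper's $\lVert dd^c\beta\rVert^2=\langle(dd^c)^*dd^c\beta,\beta\rangle=0$. The degree $0$ and $1$ identifications you spell out are exactly what the paper dismisses as immediate from the explicit descriptions of the cohomology and of the harmonic spaces.
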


\begin{proof}
    The fact that $H^0_{d+d^c} = \H^0_{d+d^c} = \C$ and  $H^1_{d+d^c} = \H^1_{d+d^c}$ follows immediately from the explicit expression of Bott-Chern cohomology and of $(d+d^c)$-harmonic forms (cf.\ proposition \ref{prop:metric:indep}). Let now $\alpha \in \H^k_{d + d^c}$. Then $d \alpha =0$ and $d^c \alpha =0$, so that the identity defines a map from $\H^k_{d+d^c}$ to $H^k_{d+d^c}$. Assume that $\alpha$ defines the zero class in Bott-Chern cohomology, i.e., that $\alpha = d d^c \beta$, with $\beta \in B^\bullet$. Since $\alpha$ is harmonic, we have that
    \[
    0 = (d d^c)^* \alpha = (d d^c)^* d d^c \beta, 
    \]
    and $0 = \langle (d d^c)^* d d^c \beta, \beta \rangle = \lVert d d^c \beta \rVert^2$. This implies that $\alpha = d d^c \beta =0$, proving injectivity. The same argument applies to $(d^c + d)$-harmonic forms.
\end{proof}

\begin{remark}\label{rem:h1:even}
    From the splitting \eqref{BC:cohom:decomposition:h1} and using complex conjugation, we deduce that $h^1_{d + d^c}$ must be even. This is not the case for other degrees. For example, there are almost Hermitian $4$-manifolds such that $h^3_{d + d^c} = 3$ (example \ref{ex:KT}).
\end{remark}

A similar statement is true for Aeppli cohomology.

\begin{proposition}\label{prop:A:harmonic:inclusion}
    Let $(M,J)$ be an almost complex manifold. Then
    \[
    H^0_{d d^c} = \H^0_{d d^c} = \C,  \quad H^{2m}_{d d^c} \cong \H^{2m}_{d d^c} \cong  \C \quad \text{and} \quad H^{2m-1}_{d d^c} \cong \H^{2m-1}_{d d^c}.
    \]
    In particular, $H^{2m-1}_{d d^c}$ is finite dimensional. Furthermore, there is an inclusion
    \[
    \H^k_{dd^c} \cup \H^k_{d^c d} \longhookrightarrow H^k_{d d^c}.
    \]
\end{proposition}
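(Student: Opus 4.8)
The plan is to mirror the proof of Proposition \ref{prop:BC:harmonic:inclusion}, exploiting the Hodge-$*$ duality recorded in Proposition \ref{prop:harmonic:iso} and the $*$-intertwining relations $* \Delta_{d+d^c} = \Delta_{dd^c}*$ and $* \Delta_{d^c+d} = \Delta_{d^cd}*$ between the Bott-Chern and Aeppli Laplacians. First I would settle the low- and top-degree cases directly: from the explicit description of $(d+d^c)$-harmonic forms, $\H^0_{dd^c}$ consists of the constants and $\H^{2m}_{dd^c}$ of constant multiples of the volume form, giving $\H^0_{dd^c} = \C$ and $\H^{2m}_{dd^c} \cong \C$; the corresponding statements $H^0_{dd^c} = \C$ and $H^{2m}_{dd^c} \cong \C$ follow from the explicit form of Aeppli cohomology in these degrees (there are no $d$- or $d^c$-exact forms in degree $0$, and in degree $2m$ every form is $dd^c$-closed while the denominator is all exact forms), so $h^0_{dd^c}=h^{2m}_{dd^c}=1$.

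Next I would construct, exactly as in Proposition \ref{prop:BC:harmonic:inclusion}, the map induced by the identity on representatives: if $\alpha \in \H^k_{dd^c}$ then $d^*\alpha = 0$, $(d^c)^*\alpha = 0$ and $dd^c\alpha = 0$, so in particular $\alpha$ represents a class in $H^k_{dd^c}$ (its image in $C^k$ is $dd^c$-closed), and similarly for $\H^k_{d^cd}$. For injectivity, suppose $\alpha \in \H^k_{dd^c}$ is zero in Aeppli cohomology, i.e.\ $\alpha = d\eta + d^c\theta$ up to $(dd^c + d^cd)$-exact forms; pairing with $\alpha$ and using $d^*\alpha = (d^c)^*\alpha = 0$ together with the fact that $(dd^c+d^cd)$-exact forms are $d$-exact and $d^c$-exact in $C^\bullet$ gives $\lVert\alpha\rVert^2 = \langle \alpha, d\eta\rangle + \langle \alpha, d^c\theta\rangle = \langle d^*\alpha,\eta\rangle + \langle (d^c)^*\alpha,\theta\rangle = 0$, hence $\alpha = 0$; the same works for $\H^k_{d^cd}$. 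This yields the injection $\H^k_{dd^c}\cup\H^k_{d^cd}\hookrightarrow H^k_{dd^c}$.

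Finally, for the isomorphism $H^{2m-1}_{dd^c}\cong\H^{2m-1}_{dd^c}$ I would transport the degree-$1$ result of Proposition \ref{prop:BC:harmonic:inclusion} through Hodge duality. On harmonic forms, $*$ gives $\H^1_{d+d^c}\cong\H^{2m-1}_{dd^c}$ by Proposition \ref{prop:harmonic:iso}; on cohomology, $*$ interchanges the subcomplex $B^\bullet$ with (a lift of) the quotient complex $C^\bullet$ and sends $d$-closed, $d^c$-closed conditions to $d^*$-closed, $(d^c)^*$-closed conditions while sending $\Ima dd^c$ to $\Ima (dd^c)^*$, inducing an isomorphism $H^1_{d+d^c}\cong H^{2m-1}_{dd^c}$; composing with $H^1_{d+d^c} = \H^1_{d+d^c}$ gives the claim, and finite-dimensionality of $H^{2m-1}_{dd^c}$ follows since $\H^{2m-1}_{dd^c}$ is finite-dimensional by Proposition \ref{prop:ellipticity}. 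The main obstacle I anticipate is the cohomological side of this last step: one must check carefully that the Hodge star descends to a well-defined isomorphism between the Aeppli-type quotient $C^\bullet$ and the Bott-Chern-type subcomplex $B^\bullet$ (i.e.\ that $*$ maps $\Ima(dd^c + d^cd)$ appropriately and that the pairing on $C^\bullet$ used in the injectivity argument is compatible with the $L^2$ product), rather than merely matching the numerology of the harmonic spaces.
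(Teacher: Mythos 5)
Your map-construction and injectivity argument are sound and essentially the paper's: after observing that $(dd^c+d^cd)$-exact forms are sums of $d$-exact and $d^c$-exact forms, your computation $\lVert \alpha \rVert^2 = \langle d^*\alpha,\eta\rangle + \langle (d^c)^*\alpha,\theta\rangle = 0$ is, if anything, a slightly cleaner route to the same conclusion, and the identification of $\H^0_{dd^c}$ and $\H^{2m}_{dd^c}$ via Proposition \ref{prop:harmonic:iso} is fine. The genuine gap is exactly the point you flag at the end, and it does not go through as you describe. Aeppli cohomology $H^{2m-1}_{dd^c}$ is defined metric-independently as ($dd^c$-closed forms in) $C^{2m-1}$ modulo $\Ima d + \Ima d^c$; the Hodge star turns the conditions $d\alpha=0$, $d^c\alpha=0$, $(dd^c)^*\alpha=0$ into $d^*$-, $(d^c)^*$- and $dd^c$-closedness, i.e.\ it matches the two spaces of \emph{harmonic} forms, but it does not carry the Bott-Chern numerator and denominator to the Aeppli ones, because $H^{2m-1}_{dd^c}$ involves no co-closedness conditions and no $\Ima (dd^c)^*$. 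So "transporting" $H^1_{d+d^c}=\H^1_{d+d^c}$ through $*$ only reproduces the injection $\H^{2m-1}_{dd^c}\hookrightarrow H^{2m-1}_{dd^c}$ that you already have; the missing content is surjectivity, i.e.\ that every Aeppli class in degree $2m-1$ (and $2m$) has a harmonic representative. Invoking a star-induced isomorphism of the cohomologies here is circular: even in the integrable case the duality between Bott-Chern and Aeppli cohomology is a theorem of Hodge theory, not a chain-level consequence of $*$. For the same reason your justification of $H^{2m}_{dd^c}\cong\C$ ("the denominator is all exact forms") only describes the quotient; it does not show the denominator has codimension one.

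What closes the gap, and is the paper's actual argument, is the elliptic decomposition $A^k=\H^k_{dd^c}\overset{\perp}{\oplus}\Ima \Delta_{dd^c}$ of Proposition \ref{prop:ellipticity}, combined with the degree observation that for $\beta\in A^k$ with $k\in\{2m-1,2m\}$ one has $dd^c\beta=0$, so the term $(dd^c)^*dd^c\beta$ in $\Delta_{dd^c}\beta$ disappears and every surviving summand is either $d$-exact or $d^c$-exact, e.g.\ $dd^c(dd^c)^*\beta$, $d(d^c)^*d^cd^*\beta$, $dd^*\beta$ are $d$-exact and $d^cd^*d(d^c)^*\beta$, $d^c(d^c)^*\beta$ are $d^c$-exact. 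Hence any $\alpha$ in these degrees satisfies $\alpha = h + d\gamma + d^c\eta$ with $h$ harmonic, so $[\alpha]_{dd^c}=[h]_{dd^c}$; together with your injectivity this yields $H^{2m-1}_{dd^c}\cong\H^{2m-1}_{dd^c}$ and $H^{2m}_{dd^c}\cong\H^{2m}_{dd^c}\cong\C$ (the latter because $\H^{2m}_{dd^c}$ consists of constant multiples of the volume form). Your proposal never uses this decomposition, and without it the isomorphism statements in degrees $2m-1$ and $2m$ remain unproved.
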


\begin{proof}
    We first prove the injectivity of 
    \[
    \H^k_{dd^c} \longhookrightarrow H^k_{d d^c}.
    \]
    Injectivity for $\H^k_{d^c d} \hookrightarrow H^k_{d d^c}$ follows with a similar proof. Let $\alpha \in \H^k_{d d^c}$. Then $d d^c \alpha =0$, so that the projection on $C^k$ defines a map from $\H^k_{d d^c}$ to $H^k_{d d^c}$. Assume that $\alpha$ defines the zero class in Aeppli cohomology, i.e., that $\alpha = d \beta + d^c \gamma$. Since $\alpha$ is harmonic, we have that
    \[
    0 = d^* \alpha = d^* d\beta + d^* d^c \gamma \quad \text{and} \quad 0 = (d^c)^* \alpha = (d^c)^* d\beta + (d^c)^* d^c \gamma.
    \]
    As a consequence we obtain that
    \[
    \lVert d \beta \rVert^2 + \langle d\beta, d^c \gamma \rangle =0 \quad \text{and} \quad \lVert d^c \gamma \rVert^2 + \langle d\beta, d^c \gamma \rangle =0.
    \]
    In particular, $\lVert d \beta + d^c \gamma \rVert^2 =0 $, proving injectivity. \\
    We now prove the isomorphisms in degree $k \in \{0, 2m-1, 2m \}$. The statement for functions follows immediately from the definition. Let $\alpha \in A^{2m}$. Using the harmonic decomposition for $\Delta_{d d^c}$ of proposition \ref{prop:ellipticity}, we have that 
    \begin{align}\label{eq:2m:decomposition}
    \begin{split}
    \alpha &= c \, Vol + \Delta_{d d^c} \beta = \\
    &= c \, Vol + dd^c (d d^c)^* \beta + d (d^c)^* d^c d^* \beta  \\
    &+ d^c d^* d (d^c)^* \beta + d d^* \beta + d^c (d^c)^* \beta = \\
    &= c \, Vol + d\gamma + d^c \eta,
    \end{split}
    \end{align}
    where $c$ is a constant, and $\gamma$ and $\eta$ are $(2m-1)$-forms. Note that a similar decomposition holds also for $(2m-1)$-forms since $( d d^c)^* d d^c \beta =0$ if $\beta \in A^{2m-1}$. Passing to Aeppli cohomology, we obtain the existence of harmonic representatives. Uniqueness follows from the injectivity of $\H^k_{dd^c} \longhookrightarrow H^k_{d d^c}$.
\end{proof}

Using the relations between harmonic forms (Proposition \ref{prop:harmonic:iso}), we deduce an isomorphism for Bott-Chern and Aeppli cohomologies.

\begin{cor}\label{cor:duality}
    There is an isomorphism
    \[
    H^1_{d  + d^c} \cong H^{2m-1}_{d d^c}.
    \]
\end{cor}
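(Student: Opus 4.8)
The plan is to chain together two isomorphisms already established in the excerpt. The key observation is that Proposition \ref{prop:BC:harmonic:inclusion} gives $H^1_{d+d^c} = \H^1_{d+d^c}$, while Proposition \ref{prop:A:harmonic:inclusion} gives $H^{2m-1}_{dd^c} \cong \H^{2m-1}_{dd^c}$. So it suffices to produce an isomorphism $\H^1_{d+d^c} \cong \H^{2m-1}_{dd^c}$ between the two relevant spaces of harmonic forms, and for this I would simply invoke Proposition \ref{prop:harmonic:iso} (and its corollary), whose commutative diagram contains precisely the arrow $\H^k_{d+d^c} \overset{*}{\cong} \H^{2m-k}_{dd^c}$, specialized at $k=1$.

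Concretely, the proof would read: by Proposition \ref{prop:BC:harmonic:inclusion} we have $H^1_{d+d^c} \cong \H^1_{d+d^c}$; by Proposition \ref{prop:harmonic:iso} the Hodge star induces an isomorphism $\H^1_{d+d^c} \cong \H^{2m-1}_{dd^c}$; and by Proposition \ref{prop:A:harmonic:inclusion} we have $\H^{2m-1}_{dd^c} \cong H^{2m-1}_{dd^c}$. Composing the three yields $H^1_{d+d^c} \cong H^{2m-1}_{dd^c}$. This is essentially a one-line corollary once the three feeding propositions are in place, which is why it is stated as a corollary rather than a theorem.

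I do not expect any real obstacle here, since all the work has been done in the preceding results; the only point requiring a moment's care is making sure the degree bookkeeping is consistent — that $*$ sends $A^1$ to $A^{2m-1}$ (correct, since $* \colon A^{p,q} \to A^{m-q,m-p}$ and a $1$-form lands in degree $2m-1$) and that the relevant corner of the diagram in Proposition \ref{prop:harmonic:iso} is the left vertical arrow evaluated at $k=1$. One should also note implicitly that compactness is in force, as it is needed for Hodge theory in Propositions \ref{prop:BC:harmonic:inclusion} and \ref{prop:A:harmonic:inclusion}; but this hypothesis is already part of the ambient setup for those statements, so it need not be restated.

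Thus the write-up is short: \emph{By Proposition \ref{prop:BC:harmonic:inclusion}, $H^1_{d+d^c} \cong \H^1_{d+d^c}$. By Proposition \ref{prop:harmonic:iso}, the Hodge $*$ operator gives an isomorphism $\H^1_{d+d^c} \cong \H^{2m-1}_{dd^c}$. By Proposition \ref{prop:A:harmonic:inclusion}, $\H^{2m-1}_{dd^c} \cong H^{2m-1}_{dd^c}$. Composing these isomorphisms yields $H^1_{d+d^c} \cong H^{2m-1}_{dd^c}$.}
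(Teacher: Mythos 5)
Your proposal is correct and follows exactly the paper's intended argument: the corollary is deduced by chaining $H^1_{d+d^c}=\H^1_{d+d^c}$ (Proposition \ref{prop:BC:harmonic:inclusion}), the Hodge star isomorphism $\H^1_{d+d^c}\cong\H^{2m-1}_{dd^c}$ from Proposition \ref{prop:harmonic:iso}, and $\H^{2m-1}_{dd^c}\cong H^{2m-1}_{dd^c}$ (Proposition \ref{prop:A:harmonic:inclusion}). Your remarks on degree bookkeeping and the implicit compactness hypothesis are accurate and consistent with the paper.
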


\subsection{Relation with \texorpdfstring{$\H^k_\delta$, $\H^k_{\bar \delta}$, $\H^k_{\delta + \bar \delta}$, $\H^k_{\delta \bar \delta}$}{}.}\label{sec:delta:harmonic}

In section \ref{sec:delta:cohom} we explained the natural relation appearing between the operators $d$, $d^c$ and $\delta$, $\bar \delta$, and between the corresponding Bott-Chern and Aeppli cohomologies (Proposition \ref{prop:BCA:equivalence}). A natural problem that arises in this context is to establish the relations occurring among the spaces of harmonic forms associated to such operators.
\vspace{.2cm}

The operators $\delta$, $\bar \delta$ and their spaces of harmonic forms have already been studied by Tardini and the second author \cite{TT20}. We begin by recalling the definition of such spaces, introducing a slightly different notation coherent with the one adopted in this paper.\\
The space of \emph{$\delta$-harmonic $k$-forms} is the space
\[
\H^k_\delta = \{ \alpha \in A^k: \delta \alpha =0, \, \delta^* \alpha =0 \}.
\]
The space of \emph{$\bar \delta$-harmonic $k$-forms} is the space
\[
\H^k_{\bar \delta} = \{ \alpha \in A^k: \bar \delta \alpha =0, \, \bar \delta^* \alpha =0 \}.
\]
The space of \emph{$(\delta + \bar \delta)$-harmonic $k$-forms} is the space
\[
\H^k_{\delta + \bar \delta} = \{ \alpha \in A^k: \delta \alpha =0, \, \bar \delta \alpha =0, \, (\delta \bar \delta)^* \alpha =0 \}.
\]
The space of \emph{$\delta \bar \delta$-harmonic $k$-forms} is the space
\[
\H^k_{\delta \bar \delta} = \{ \alpha \in A^k: \delta^* \alpha =0, \, \bar \delta^* \alpha =0, \delta \bar \delta \alpha =0 \}.
\]

\begin{remark}
    By proposition \ref{prop:BCA:equivalence}, there is no difference between defining the cohomologies using $d$, $d^c$ or $\delta$, $\bar \delta$. However, the corresponding spaces of harmonic forms are different. Indeed, on the Kodaira-Thurston manifold endowed with the almost complex structure of example \ref{ex:KT}, we have that $h^3_{d + d^c} = 3$, while $\dim_\C \H^3_{\delta + \bar \delta}= 2$ \cite[Proposition 6.10 and example 7.1]{TT20}.
\end{remark}

For the sake of completeness, we collect in the next proposition several relations occurring among the spaces of harmonic forms, whose proof is straightforward.

\begin{proposition}\label{prop:harmonic:relations}
    A $k$-form is both $d$-harmonic and $d^c$-harmonic if and only if it is both $\delta$-harmonic and $\bar \delta$-harmonic, i.e.,
    \[
    \H^k_d \cap \H^k_{d^c} = \H^k_\delta \cap \H^k_{\bar \delta}.
    \]
    We have the following inclusions of harmonic forms:
    \begin{align*}
    &\H^k_{d+d^c} \cap \H^k_{d^c+d} \longhookrightarrow \H^k_{\delta + \bar \delta}, \\
    &\H^k_{d+d^c} \cap \H^k_{\delta + \bar \delta} \longhookrightarrow \H^k_{d^c+d}, \\
    &\H^k_{\delta + \bar \delta} \cap \H^k_{d^c+d} \longhookrightarrow \H^k_{d+d^c}, \\
    &\H^k_{d d^c} \cap \H^k_{d^c d} \longhookrightarrow \H^k_{\delta   \bar \delta}, \\
    &\H^k_{d d^c} \cap \H^k_{\delta   \bar \delta} \longhookrightarrow \H^k_{d^c d}, \\
    &\H^k_{\delta   \bar \delta} \cap \H^k_{d^c d} \longhookrightarrow \H^k_{d d^c}.
    \end{align*}
\end{proposition}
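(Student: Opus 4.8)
The plan is to reduce everything to the linear relations between $d$, $d^c$ and $\delta$, $\bar\delta$, combined with the explicit descriptions of the spaces of harmonic forms recalled above (which, for each fourth-order Laplacian $\Delta_P$, come from expanding $\langle\Delta_P\alpha,\alpha\rangle=0$ into the vanishing of three separate summands). Essentially no analytic input beyond this is required.

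First I would settle the identity $\H^k_d\cap\H^k_{d^c}=\H^k_\delta\cap\H^k_{\bar\delta}$. By \eqref{eq:delta:ddc} and \eqref{eq:ddc:delta} one has $\delta=\tfrac12(d+id^c)$, $\bar\delta=\tfrac12(d-id^c)$ and, inverting, $d=\delta+\bar\delta$, $d^c=i(\bar\delta-\delta)$; taking formal adjoints (and conjugating the scalar coefficients) yields $\delta^*=\tfrac12(d^*-i(d^c)^*)$, $\bar\delta^*=\tfrac12(d^*+i(d^c)^*)$, $d^*=\delta^*+\bar\delta^*$, $(d^c)^*=i(\delta^*-\bar\delta^*)$. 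Hence the four equations $d\alpha=d^*\alpha=d^c\alpha=(d^c)^*\alpha=0$ hold precisely when $\delta\alpha=\delta^*\alpha=\bar\delta\alpha=\bar\delta^*\alpha=0$, which is the asserted equality; along the way this records the two facts $\ker d\cap\ker d^c=\ker\delta\cap\ker\bar\delta$ and $\ker d^*\cap\ker(d^c)^*=\ker\delta^*\cap\ker\bar\delta^*$ used below.

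For the six inclusions the key remark is that every target space is cut out by three conditions, two of which are automatically satisfied by any element of the intersection on the left, thanks to the equivalences of the previous paragraph (e.g.\ $\delta\alpha=\bar\delta\alpha=0$ is literally the same as $d\alpha=d^c\alpha=0$, and symmetrically on the adjoint side). Only the remaining ``second-order'' condition must be produced, and here I would invoke \eqref{eq:deltabardelta} in the form $\delta\bar\delta=-\tfrac{i}{4}(dd^c-d^cd)$, equivalently $(\delta\bar\delta)^*=\tfrac{i}{4}\bigl((dd^c)^*-(d^cd)^*\bigr)$: this is a linear relation among the three operators $dd^c,d^cd,\delta\bar\delta$ (respectively among their adjoints), so that whenever $\alpha$ is annihilated by two of them it is annihilated by the third. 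Running this through each line settles the statement — for instance for $\H^k_{d+d^c}\cap\H^k_{\delta+\bar\delta}\hookrightarrow\H^k_{d^c+d}$ one has $(dd^c)^*\alpha=0$ and $(\delta\bar\delta)^*\alpha=0$, whence $(d^cd)^*\alpha=0$; the remaining five are identical in spirit, using the non-starred relation from \eqref{eq:deltabardelta} when the target is an Aeppli-type space. Each of the six maps is induced by the identity on $A^k$, so injectivity is automatic.

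I do not expect a genuine obstacle: the proposition is routine, as the authors themselves signal. The only points asking for care are the conjugation of scalar coefficients when passing to adjoints, and the bookkeeping of which of the three defining conditions of a given harmonic space is ``free'' and which has to be deduced from the linear relation tying together $dd^c$, $d^cd$ and $\delta\bar\delta$.
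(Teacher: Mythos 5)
Your proof is correct and is precisely the "straightforward" argument the paper has in mind (the paper in fact omits the proof): everything reduces to the invertible linear relations \eqref{eq:delta:ddc}, \eqref{eq:ddc:delta}, \eqref{eq:deltabardelta} applied to the explicit three-condition descriptions of the harmonic spaces, with each map induced by the identity on $A^k$. The only cosmetic caveat is the adjoint convention — the paper's formula $P^* = -*P*$ is $\C$-linear in $P$, whereas you conjugate scalar coefficients — but since in either convention $\{\delta^*,\bar\delta^*\}$ spans the same space as $\{d^*,(d^c)^*\}$ and $(\delta\bar\delta)^*$ is a linear combination of $(dd^c)^*$ and $(d^cd)^*$ with nonzero coefficients, no step of your argument is affected.
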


\begin{remark}
    In general equality among the spaces of harmonic forms considered in proposition \ref{prop:harmonic:relations} does not hold. For example, consider the Kodaira-Thurston manifold endowed with the almost complex structure of example \ref{ex:KT}. By \cite[Proposition 6.10 and example 7.1]{TT20} and direct computations, we have that
    \[
    \H^3_{\delta + \bar \delta} = \H^3_{d^c + d} = \C \langle \phi^{12\bar 2}, \, \phi^{2 \bar 1 \bar 2} \rangle,
    \]
    while, by proposition \ref{prop:KT:harmonic}, we know that $\H^3_{d + d^c} = \C \langle \phi^{12\bar 2}, \, \phi^{2 \bar 1 \bar 2}, \, \phi^{12 \bar 1} - \phi^{1 \bar 1 \bar 2} \rangle$. This shows the strict inclusion
    \[
     \H^3_{\delta + \bar \delta} \cap \H^3_{d^c + d} \subsetneq \H^3_{d + d^c}. 
    \]
\end{remark}

\section{Almost K\"ahler \texorpdfstring{$4$}{}-manifolds}\label{sec:ak:4manifolds}

In this section we study the space of $(d+d^c)$-harmonic forms on almost K\"ahler $4$-manifolds. We prove a decomposition for $\H^2_{d + d^c}$ between $d$-harmonic, anti-self-dual forms and $d$-harmonic, $J$-anti invariant forms. As a consequence, we deduce that $h^2_{d+d^c}$ does not depend on the choice of the $J$-compatible almost K\"ahler metric. We also investigate the relation between the spaces of $(d + d^c)$-harmonic forms and of $(d + d^\Lambda)$-harmonic forms considered by Tseng and Yau \cite{TY12a}.
\vspace{.2cm}

Let $(M,J)$ be an almost complex $4$-manifold and let $g$ be a $J$-compatible metric such that $(M,J,g,\omega)$ is almost K\"ahler. From proposition \ref{prop:metric:indep}, we know that for any almost Hermitian manifold the numbers $h^k_{d + d^c}$ are almost complex invariants for $k \in \{0,1,4 \}$.
\vspace{.2cm}

We begin by studying $(d + d^c)$-harmonic $2$-forms. On an almost Hermitian $4$-manifold, $A^2$ admits two different splittings, one between $J$-invariant and $J$-anti-invariant forms, and one between self-dual and anti-self-dual forms, namely
\[
A^2 = A^+_J \oplus  A^-_J =  A^+_g \oplus  A^-_g.
\]
The two splittings are related to each other via the fundamental form $\omega$ \cite{DLZ10}. We exploit the interplay of such decompositions to establish a decomposition for $\H^k_{d + d^c}$ when $\omega$ is $d$-closed. \\
Denote by $\H^-_g$ the space of anti-self-dual, $d$-harmonic $2$-forms and by $\H^{(2,0)(0,2)}_J$ the space of $J$-anti-invariant, $d$-harmonic $2$-forms.

\begin{theorem}\label{thm:decomposition}
Let $(M,J,g,\omega)$ be an almost K\"ahler manifold. Then 
\[
\H^2_{d + d^c} = \C \langle \omega \rangle \oplus \H^-_g \oplus \H^{(2,0)(0,2)}_J.
\]
\end{theorem}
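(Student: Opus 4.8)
The plan is to characterize $\H^2_{d+d^c}$ explicitly and then match it with the three pieces on the right-hand side. Recall from section \ref{sec:BCA:laplacians} that on a compact almost Hermitian manifold
\[
\H^2_{d+d^c} = \{ \alpha \in A^2 : d\alpha = 0, \ d^c\alpha = 0, \ (dd^c)^*\alpha = 0 \}.
\]
On an almost K\"ahler $4$-manifold the almost K\"ahler identity $d^\Lambda = (d^c)^*$ holds, and on $2$-forms one has $*\omega = \omega$, so $\omega$ is self-dual; moreover $\omega$ is $d$-closed, hence $d$-harmonic, hence $d^c$-harmonic as well (being in the same $\mathfrak{sl}(2)$-primitive decomposition data). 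First I would verify that each summand on the right lies in $\H^2_{d+d^c}$: $\omega$ is $d$-closed and $d^c$-closed (since $d^c\omega = -*d*\omega \cdot(\pm) = \pm*d\omega=0$ on a $4$-manifold, using $*\omega=\omega$), and $dd^c\omega=0$, so $(dd^c)^*\omega$ — one checks this vanishes directly or notes $\omega$ is coclosed for both $d$ and $d^c$; for $\alpha \in \H^-_g$ one has $d\alpha=0$ and $*\alpha=-\alpha$, so $d^c\alpha$ relates to $d*\alpha=-d\alpha=0$ up to the $J$-action, giving $d^c\alpha=0$, and harmonicity of $\alpha$ for $\Delta_d$ together with these gives $(dd^c)^*\alpha=0$; for $\alpha\in\H^{(2,0)(0,2)}_J$, being $J$-anti-invariant means $J\alpha=-\alpha$ (up to the constant $(-i)^2=-1$, i.e. $\alpha$ is an odd $2$-form in the sense of section \ref{sec:even:odd}), so $d^c\alpha = J^{-1}dJ\alpha = -J^{-1}d\alpha$; combined with $d\alpha=0$ this gives $d^c\alpha=0$, and $d$-harmonicity handles the remaining condition.

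The heart of the proof is the reverse inclusion: showing any $\alpha\in\H^2_{d+d^c}$ decomposes into these three pieces. Here I would use the two orthogonal splittings $A^2 = A^+_g\oplus A^-_g = A^+_J \oplus A^-_J$ recalled in the section, together with the fact (from \cite{DLZ10}) that $A^+_g = \C\langle\omega\rangle \oplus A^-_J$ pointwise on an almost K\"ahler $4$-manifold — that is, the self-dual forms split as multiples of $\omega$ plus $J$-anti-invariant forms, while $A^-_g = A^{1,1}_{J,0}$ (primitive $(1,1)$-forms, the $J$-invariant anti-self-dual part). So write $\alpha = c\,\omega + \beta + \gamma$ with $\beta \in A^-_J$ (i.e. $A^{(2,0)(0,2)}_J$) and $\gamma \in A^-_g$; here $c$ need not be constant a priori. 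The task is to show $\beta$ and $\gamma$ are each $d$-harmonic (equivalently $d$-closed, since on a compact manifold closed $\Rightarrow$ harmonic once we also control coclosedness via self-duality/anti-self-duality) and $c$ is constant. I would exploit that $\alpha$ is $d$-closed and $d^c$-closed: by the even/odd analysis of Theorem \ref{thm:BC:splitting}, $d$-closed and $d^c$-closed together force the even part $\alpha^{ev}$ and the odd part $\alpha^{od}$ to be separately $d$-closed. On a $4$-manifold the odd $2$-forms are exactly $A^{1,1}_J$ and the even ones are $A^{(2,0)(0,2)}_J = A^-_J$, so $\beta = \alpha^{ev}$ is $d$-closed and $c\,\omega+\gamma = \alpha^{od} \in A^{1,1}_J$ is $d$-closed. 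Being $d$-closed, $\beta$ is $d$-harmonic (it is automatically coclosed: $*\beta = -\beta$ so $d^*\beta = -*d*\beta = *d\beta = 0$), giving $\beta \in \H^{(2,0)(0,2)}_J$.

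For the remaining $J$-invariant piece $c\,\omega + \gamma$, which is $d$-closed, I would split it by self-duality: its self-dual part is $c\,\omega$ (since the self-dual $J$-invariant forms are exactly $\C\langle\omega\rangle$) and its anti-self-dual part is $\gamma$. A $d$-closed $2$-form on a compact oriented Riemannian manifold has $L^2$-orthogonal self-dual and anti-self-dual parts that are individually harmonic (this is the classical fact underlying the Hodge-theoretic proof that $b^2 = b^+ + b^-$): indeed if $\eta = \eta^+ + \eta^-$ is closed then $d\eta^+ = -d\eta^- $, and $*d\eta^+ = d*\eta^+ = d\eta^+$ while $*d\eta^- = -d\eta^-$, forcing $d\eta^+ = d\eta^- = 0$, hence both coclosed as well. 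Applying this to $\eta = c\,\omega+\gamma$ gives $d(c\,\omega) = 0$ and $d\gamma = 0$; from $d(c\,\omega)=0$ and $d\omega=0$ we get $dc\wedge\omega=0$, and wedging with the isomorphism $L\colon A^1\to A^3$ (which is injective in this degree range on a $4$-manifold) forces $dc = 0$, so $c$ is constant. And $\gamma$ being $d$-closed and anti-self-dual is $d$-harmonic, so $\gamma\in\H^-_g$. Finally the three subspaces are mutually $L^2$-orthogonal: $\C\langle\omega\rangle$ and $\H^-_g$ are orthogonal as self-dual vs anti-self-dual; $\C\langle\omega\rangle$ and $\H^{(2,0)(0,2)}_J$ are orthogonal as $J$-invariant vs $J$-anti-invariant; and $\H^-_g$ and $\H^{(2,0)(0,2)}_J$ are orthogonal since anti-self-dual $J$-anti-invariant $2$-forms vanish on a $4$-manifold (that is, $A^-_g \cap A^-_J = 0$). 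I expect the main obstacle to be bookkeeping the two pointwise splittings of $A^2$ correctly and making sure the "$d$-closed $\Rightarrow$ $d$-harmonic" step is justified for each self-dual/anti-self-dual piece rather than only for the total form; the even/odd trick from Theorem \ref{thm:BC:splitting} is what cleanly separates $\beta$ from the rest and is the key simplification.
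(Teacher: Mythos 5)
Your first inclusion and the splitting of $\alpha$ into its $(2,0)+(0,2)$ and $(1,1)$ parts (each $d$-closed) follow the paper, but the heart of your reverse inclusion rests on a false lemma. You claim that on a compact oriented Riemannian $4$-manifold a $d$-closed $2$-form has individually closed self-dual and anti-self-dual parts, and your justification misuses the Hodge star: for a $2$-form $\eta^+$, $d\eta^+$ is a $3$-form while $*\eta^+$ is again a $2$-form, so ``$*d\eta^+ = d*\eta^+ = d\eta^+$'' equates forms of different degrees, and $*$ does not commute with $d$ in any case. The correct classical fact is that \emph{harmonic} $2$-forms split into self-dual and anti-self-dual harmonic parts (since $\Delta_d$ commutes with $*$); for merely closed forms the claim fails — if it were true, every exact $2$-form would be harmonic. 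Concretely, on the Kodaira--Thurston manifold of Example \ref{ex:KT} the form $h\,\phi^{2\bar 2}$ with $h$ a nonconstant function satisfying $\xi_1(h)=\xi_{\bar 1}(h)=0$ (e.g.\ $h=\cos(2\pi n y)$) is $d$-closed and $d^c$-closed; your argument applied to it would force the Lefschetz coefficient to be constant, which is false, and indeed such forms are not $(d+d^c)$-harmonic.

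The gap is precisely that you never use the third harmonicity condition $(dd^c)^*\alpha=0$ in the reverse inclusion; $d$-closedness and $d^c$-closedness alone cannot give the decomposition (otherwise all of $\ker d\cap\ker d^c\cap A^2$, which is infinite-dimensional on the Kodaira--Thurston manifold, would consist of harmonic pieces). The paper's proof writes $\alpha^{1,1}=f\omega+\gamma^{1,1}$ with $\gamma^{1,1}$ primitive, feeds $(dd^c)^*\alpha=0$ into the computation
\[
0 = d d^c * \alpha = \cdots = -4\,\partial\bar\partial f\wedge\omega,
\]
and then invokes the ellipticity of the operator $f\mapsto\partial\bar\partial f\wedge\omega$ (as in \cite{PT22}) to conclude that $f$ is constant; only then does $d\alpha^{1,1}=0$ give $d\gamma^{1,1}=0$ and hence $\gamma^{1,1}\in\H^-_g$. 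This elliptic-operator step is the essential ingredient your proposal is missing, and your remaining bookkeeping (orthogonality of the three summands, constancy of $c$ from $dc\wedge\omega=0$) cannot substitute for it because you obtain ``$d(c\,\omega)=0$'' only via the invalid self-dual/anti-self-dual splitting of a closed form.
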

\begin{proof}
    We prove that $\C \langle \omega \rangle \oplus \H^-_g \oplus \H^{(2,0)(0,2)}_J \subseteq \H^2_{d + d^c}$. Since $\omega$ is $d$-closed and has bidegree $(1,1)$, it is also $d^c$-closed. By the compatibility condition, we have that $* \omega = \omega$, thus $(d d^c)^* \omega =0$ and $z \omega$ is $(d+d^c)$-harmonic for all $z \in \C$. Forms in $\H^-_g$ are clearly $d$-closed and $(d d^c)^*$-closed. They are also $d^c$-closed since they have necessarily bidegree $(1,1)$. Finally, any form in $\H^{(2,0)(0,2)}_J$ is $d$-closed and $(d d^c)^*$-closed, but it is also $d^c$-closed since it is self-dual.\\
    For the opposite inclusion, let $\alpha$ be a $(d + d^c)$-harmonic $2$-form. Write $\alpha$ as a sum of its bidegree components
    \[
    \alpha = \alpha^{2,0} +  \alpha^{1,1} +  \alpha^{0,2}.
    \]
    Since $\alpha$ is both $d$-closed and $d^c$-closed, we have that  $\alpha^{2,0} +  \alpha^{0,2}$ and $\alpha^{1,1}$ are both $d$-closed (cf.\ proof of theorem \ref{thm:BC:splitting}). By bidegree reasons, the form $\alpha^{2,0} +  \alpha^{0,2}$ is self-dual and so it is also $d$-harmonic, providing an element in $\H^{(2,0)(0,2)}_J$. Consider now the Lefschetz decomposition
    \[
    \alpha^{1,1} = f \omega + \gamma^{1,1},
    \]
    with $f \in C^\infty(M)$ and $\gamma^{1,1}$ primitive. In particular, the fact that $\gamma^{1,1}$ is primitive implies that it is also anti-self-dual. Since $\alpha$ is $(d+d^c)$-harmonic, we have that 
    \begin{align*}
        0 = d d^c * \alpha &= d J^{-1} d J (\alpha^{2,0} +  \alpha^{0,2} + f \omega - \gamma ^{1,1})=\\
        &= d J^{-1} d (-\alpha^{2,0} - \alpha^{0,2} + f \omega - \gamma ^{1,1}) = \\
        &= d J^{-1} d (-\alpha + 2 f \omega ) = \\
        &= 2 d J^{-1} d f \wedge \omega=\\
        &= 2 d d^c f \wedge \omega = \\
        &= -4 \partial \bar \partial f \wedge \omega,
    \end{align*}
    where the first equality follows from the fact that $*$ is an isomorphism, the fifth equality by the $d$-closedness of $\omega$ and the last one by the fact that $\omega$ has bidegree $(1,1)$. The operator $f \mapsto \partial \bar \partial f \wedge \omega$ is elliptic \cite[proof of theorem 4.3]{PT22}, so $f$ must be constant. Since $f$ is constant, we have that
    \[
    0 = d \alpha^{1,1} = d \gamma^{1,1}.
    \]
    The form $\gamma^{1,1}$ is anti-self-dual and $d$-closed, hence $d$-harmonic, giving an element of $\H^-_g$. This completes the proof.
\end{proof}

The decomposition we proved has several consequences.

\begin{cor}\label{cor:upper:bound}
    If $(M,J,g,\omega)$ is an almost K\"ahler $4$-manifold, then
    \[
    h^2_{d + d^c} = b^- + 1 + h^-_J.
    \]
    In particular, we have that $h^2_{d + d^c} \le b_2$.
\end{cor}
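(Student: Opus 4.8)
The plan is to deduce the statement directly from the decomposition in Theorem \ref{thm:decomposition}, so the proof is essentially an exercise in computing dimensions and identifying the three summands with known cohomological quantities. First I would take dimensions over $\C$ on both sides of $\H^2_{d+d^c} = \C\langle\omega\rangle \oplus \H^-_g \oplus \H^{(2,0)(0,2)}_J$, using that the sum is direct, to get $h^2_{d+d^c} = 1 + \dim_\C \H^-_g + \dim_\C \H^{(2,0)(0,2)}_J$.

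Next I would identify each of the two remaining terms with a topological/almost complex invariant. For $\H^-_g$: by Hodge theory for $\Delta_d$ on the compact $4$-manifold $M$, the space of $d$-harmonic $2$-forms is isomorphic to $H^2_{dR}(M;\C)$ and splits as $\H^+_g \oplus \H^-_g$ according to self-dual and anti-self-dual parts, which matches the topological splitting $b_2 = b^+ + b^-$; hence $\dim_\C \H^-_g = b^-$ (the complex dimension of the space of $d$-harmonic complex anti-self-dual $2$-forms equals the real second Betti number $b^-$, since $\H^-_g$ is the complexification of the real anti-self-dual harmonic forms). For $\H^{(2,0)(0,2)}_J$: this is by definition the space of $d$-harmonic $J$-anti-invariant $2$-forms, whose dimension is exactly the invariant $h^-_J$ appearing in Draghici–Li–Zhang's decomposition $H^2_{dR} = H^+_J \oplus H^-_J$ on compact almost complex $4$-manifolds; i.e. $\dim_\C \H^{(2,0)(0,2)}_J = h^-_J$. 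Substituting gives $h^2_{d+d^c} = b^- + 1 + h^-_J$.

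For the inequality $h^2_{d+d^c} \le b_2$, I would invoke the Draghici–Li–Zhang result that on a compact almost complex $4$-manifold $H^2_{dR} = H^+_J \oplus H^-_J$, so $b_2 = h^+_J + h^-_J$; since the cohomology class of $\omega$ is nonzero and $J$-invariant, $\C\langle[\omega]\rangle$ is a nonzero subspace of $H^+_J$, whence $1 + b^- \le 1 + h^+_J \le b_2 - h^-_J + h^-_J$... more carefully: $b^- = b_2 - b^+$ and $b^+ \ge h^+_J \ge 1$ (the last because $[\omega] \neq 0$ is $J$-invariant and $d$-harmonic representatives of $J$-invariant classes can be chosen, or more simply because $[\omega]^2 = \int_M \omega^2 > 0$ so $[\omega]$ pairs nontrivially and lies in the positive cone), so $b^- + 1 + h^-_J \le b_2 - h^+_J + h^+_J = b_2$ once one knows $h^+_J = h^-_J{}^\perp$... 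I would instead argue cleanly: $h^2_{d+d^c} = 1 + b^- + h^-_J$ and $1 + h^-_J \le h^+_J + h^-_J = b_2 - b^-$ (using $1 \le h^+_J$), hence $h^2_{d+d^c} \le b_2$.

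The main obstacle I anticipate is not conceptual but bookkeeping: making sure the identifications $\dim_\C \H^-_g = b^-$ and $\dim_\C \H^{(2,0)(0,2)}_J = h^-_J$ are stated with the right conventions (real versus complex coefficients, and that $\H^{(2,0)(0,2)}_J$ as used in Theorem \ref{thm:decomposition} really is the space whose dimension is the DLZ invariant $h^-_J$, which follows from $H^2_{od} = H^-_J$ recorded after Definition \ref{def:even:odd:cohomology} together with Hodge theory), and being careful that the direct sum in Theorem \ref{thm:decomposition} is genuinely internal so dimensions add without correction. Also, the step $h^+_J \ge 1$ should be justified — it holds because the almost Kähler form $\omega$ is $d$-closed and $J$-invariant, so it represents a nonzero class (as $\int_M \omega \wedge \omega > 0$) in $H^+_J$.
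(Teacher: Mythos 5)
Your derivation of the equality $h^2_{d+d^c} = b^- + 1 + h^-_J$ is fine and is exactly what the paper does: take dimensions in the internal direct sum of Theorem \ref{thm:decomposition}, identifying $\dim_\C \H^-_g = b^-$ and $\dim_\C \H^{(2,0)(0,2)}_J = h^-_J$ (the latter via the Draghici--Li--Zhang identification of $J$-anti-invariant cohomology with closed anti-invariant, hence self-dual harmonic, forms).

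The inequality $h^2_{d+d^c} \le b_2$ is where your argument breaks. You need $b^- + 1 + h^-_J \le b_2$, and since $b_2 = h^+_J + h^-_J$ (DLZ, Theorem 2.3), this is equivalent to $b^- + 1 \le h^+_J$. Your proposal only establishes $h^+_J \ge 1$ (from $[\omega] \neq 0$), which is strictly weaker, and your ``clean'' chain rests on the identity $h^+_J + h^-_J = b_2 - b^-$, which is false: the DLZ decomposition gives $h^+_J + h^-_J = b_2$, so your chain only yields $1 + h^-_J \le b_2$ and hence $h^2_{d+d^c} \le b_2 + b^-$, not the claim. The missing ingredient is precisely \cite[Proposition 3.1]{DLZ10}: on a compact almost K\"ahler $4$-manifold $h^+_J \ge b^- + 1$, because every anti-self-dual $d$-harmonic $2$-form is a primitive $(1,1)$-form, hence $J$-invariant and closed, and $[\omega]$ contributes one further independent $J$-invariant class; this is what the paper invokes. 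Alternatively, you could bypass DLZ Proposition 3.1 entirely by observing that each summand in Theorem \ref{thm:decomposition} consists of $d$-harmonic forms ($\omega$ is closed and self-dual, and the other two spaces are $d$-harmonic by definition), so $\H^2_{d+d^c} \subseteq \H^2_d$ and $h^2_{d+d^c} \le \dim_\C \H^2_d = b_2$; but as written your argument has a genuine gap.
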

\begin{proof}
    By theorem \ref{thm:decomposition}, we have that
    \[
    h^2_{d + d^c} = b^- + 1 + h^-_J.
    \]
    On an almost K\"ahler $4$-manifold, there is an inequality $b^- +1 \le h^+_J$ \cite[Proposition 3.1]{DLZ10} and a direct sum decomposition between $J$-invariant and $J$-anti-invariant cohomology \cite[Theorem 2.3]{DLZ10}. We conclude that 
    \[
    h^2_{d + d^c} = b^- + 1 + h^-_J \le h^+_J + h^-_J = b_2.
    \]
\end{proof}

\begin{cor}
    Let $(M,J)$ be an almost complex $4$-manifold admitting an almost K\"ahler metric. Then the number $h^2_{d + d^c}$ does not depend on the choice of the almost K\"ahler metric.
\end{cor}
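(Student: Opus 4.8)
The plan is to reduce the corollary to the previous one, namely Corollary \ref{cor:upper:bound}. By Corollary \ref{cor:upper:bound}, for any $J$-compatible almost K\"ahler metric $g$ (with fundamental form $\omega$) we have the identity
\[
h^2_{d + d^c} = b^- + 1 + h^-_J,
\]
where $b^- = \dim_\C \H^-_g$ is the dimension of the space of $d$-harmonic anti-self-dual $2$-forms and $h^-_J = \dim_\C H^-_J$ is the dimension of the $J$-anti-invariant cohomology. The strategy is therefore to observe that neither quantity on the right-hand side actually depends on the choice of almost K\"ahler metric: $h^-_J$ is manifestly an invariant of the almost complex structure $J$ alone, since the $J$-anti-invariant cohomology $H^-_J$ is defined purely in terms of $J$ and the de Rham complex (it does not involve a metric at all); and $b^-$ is a topological quantity, being the dimension of the space of self-dual-complement harmonic $2$-forms, which by Hodge theory is the negative part of the intersection form on $H^2(M;\R)\otimes\C$, hence depends only on the underlying smooth manifold $M$.

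Concretely, I would first recall that $b^-$ equals $b_2^-(M)$, the number of negative eigenvalues of the intersection form on $H^2_{dR}(M)$; this is the standard fact from four-manifold Hodge theory that for \emph{any} Riemannian metric the anti-self-dual harmonic $2$-forms represent the maximal negative subspace of the cup-product pairing, so $\dim \H^-_g = b_2^-$ is independent of the metric. Then I would note that $h^-_J$ depends only on $J$ by its very definition. Combining these two observations with the formula from Corollary \ref{cor:upper:bound} gives
\[
h^2_{d + d^c} = b_2^-(M) + 1 + h^-_J,
\]
in which the right-hand side involves only $M$ and $J$, and in particular is independent of the chosen almost K\"ahler metric $g$. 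This proves the statement.

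I do not anticipate any serious obstacle here, as the corollary is essentially a formal consequence of Corollary \ref{cor:upper:bound} together with two well-known independence facts. The only point requiring a modicum of care is the assertion that the space of $d$-harmonic anti-self-dual $2$-forms has metric-independent dimension: this should be attributed to the classical Hodge-theoretic description of the intersection form on a compact oriented $4$-manifold (one may cite the relevant literature on four-manifold topology, or simply invoke that $\H^-_g \cong (H^2_{dR}(M), \cup)^-$ as the maximal negative-definite subspace). Once that is granted, the rest is immediate.
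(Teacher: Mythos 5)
Your proof is correct and follows exactly the route the paper intends: the corollary is an immediate consequence of Corollary \ref{cor:upper:bound}, since $b^-=b_2^-(M)$ is a topological invariant (the anti-self-dual harmonic $2$-forms realize the maximal negative subspace of the intersection form for any metric) and $h^-_J$ depends only on $J$. The paper leaves this reasoning implicit, and your write-up simply makes it explicit.
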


Note that even though $h^2_{d+d^c}$ is an invariant of $J$, $\H^2_{d+d^c}$ does not coincide with $H^2_{d+d^c}$. Indeed, there exist compact almost K\"ahler $4$-manifolds for which the latter is infinite dimensional (example \ref{ex:KT}).
\vspace{.2cm}

We now study the space of $(d+d^c)$-harmonic $3$-forms. It turns out that it coincides with the space of $(d + d^\Lambda)$-harmonic $3$-forms.

\begin{theorem}\label{thm:h3}
    Let $(M,J,g,\omega)$ be an almost K\"ahler $4$-manifold. Then
    \[
    \H^3_{d + d^c} =  \H^3_{d + d^\Lambda}.
    \]
\end{theorem}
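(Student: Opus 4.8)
The plan is to compare the two spaces of harmonic $3$-forms by writing each explicitly and using the almost Kähler identity $d^\Lambda = (d^c)^*$. Recall that on a compact almost Hermitian manifold
\[
\H^3_{d+d^c} = \{ \alpha \in A^3 : d\alpha = 0, \ d^c \alpha = 0, \ (dd^c)^*\alpha = 0 \},
\]
while, by the characterization of $(d+d^\Lambda)$-harmonic forms analogous to the one recalled in the preliminaries,
\[
\H^3_{d + d^\Lambda} = \{ \alpha \in A^3 : d\alpha = 0, \ d^\Lambda \alpha = 0, \ (dd^\Lambda)^* \alpha = 0 \}.
\]
First I would use $d^\Lambda = (d^c)^*$ and Hodge duality in dimension $4$: on a $3$-form, $* \colon A^3 \to A^1$, and $d^c$ on $1$-forms is dual to $(d^c)^*$ on $3$-forms via $(d^c)^* = - * d^c *$. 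So the conditions $d^c\alpha = 0$ (a condition on $(d^c)$ acting $A^3 \to A^4$) and $d^\Lambda \alpha = (d^c)^*\alpha = 0$ are a priori different, and the main point of the theorem is that on $3$-forms, in the almost Kähler $4$-dimensional setting, the remaining closedness/coclosedness conditions conspire to make the two sets coincide.

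The key steps, in order: (1) Show $\H^3_{d+d^\Lambda} \subseteq \H^3_{d+d^c}$. Take $\alpha$ with $d\alpha = 0$, $d^\Lambda \alpha = 0$, $(dd^\Lambda)^*\alpha = 0$. Since $\alpha$ is a $3$-form on a $4$-manifold, $d^c\alpha \in A^4$, and $A^4$ is spanned by the volume form; write $d^c\alpha = c\,\mathrm{Vol}$ and pair against $1$ or integrate to show $c = 0$ using $d^\Lambda\alpha = 0$ together with $d^\Lambda = (d^c)^*$ and Stokes — more precisely, $\int_M d^c\alpha$ relates to $\langle \alpha, d^c 1\rangle$-type pairings which vanish. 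Then one must check $(dd^c)^*\alpha = 0$; since $d^c\alpha = 0$ this reduces to $(dd^c)^*\alpha = \pm * d d^c * \alpha$ involving only $d^c *\alpha$ and $d*\alpha$, which can be controlled using $d^\Lambda\alpha = 0$ (i.e. $d^c * \alpha = 0$ up to signs) and the $(dd^\Lambda)^*\alpha = 0$ condition. (2) Conversely, $\H^3_{d+d^c} \subseteq \H^3_{d+d^\Lambda}$: given $d\alpha = 0$, $d^c\alpha = 0$, $(dd^c)^*\alpha = 0$, show $d^\Lambda\alpha = (d^c)^*\alpha = 0$ and $(dd^\Lambda)^*\alpha = 0$. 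Here the self-dual/anti-self-dual splitting of $A^2$ and the behaviour of $*$ on $A^1$ and $A^3$ in dimension $4$ should let one convert the given conditions into the desired ones; I expect applying $*$ throughout and using $*\H^3_{d+d^c} = \H^1_{dd^c}$ (Proposition \ref{prop:harmonic:iso}) together with the $4$-dimensional almost Kähler Hodge identities to be the cleanest route. (3) As a unifying device, it may be cleanest to dualize once and for all: $* \colon \H^3_{d+d^c} \xrightarrow{\sim} \H^1_{dd^c}$ and $* \colon \H^3_{d+d^\Lambda} \xrightarrow{\sim} \H^1_{d\Lambda^*\!}$-type space, then check the two resulting spaces of $1$-forms agree, which should be a short computation since harmonic $1$-forms are just simultaneously closed and coclosed for the relevant operators.

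I expect the main obstacle to be step (1): showing that an $\alpha$ which is only assumed to satisfy the $d^\Lambda$-harmonicity conditions is in fact $d^c$-closed as a map into $A^4$. The subtlety is that $d^c\alpha = 0$ and $(d^c)^*\alpha = 0$ are genuinely different conditions, and one must exploit that $\alpha$ is a top-degree-minus-one form so that $d^c\alpha$ lives in the one-dimensional space $A^4$; the vanishing should then follow by pairing $d^c\alpha$ with the volume form and integrating by parts, invoking $d^c 1 = 0$ or $(d^c)^*$-closedness. The almost Kähler hypothesis ($d\omega = 0$, equivalently $d^\Lambda = (d^c)^*$) enters precisely to make the $d^c$ and $(d^c)^*$ worlds talk to each other, and I would keep careful track of the signs coming from $* * = (-1)^{k}$ on $k$-forms in dimension $4$ (here $**=\mathrm{id}$) and from $(d^c)^* = -*d^c*$. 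Once the closedness condition is handled, the coclosedness conditions $(dd^c)^*\alpha = 0$ versus $(dd^\Lambda)^*\alpha = 0$ should match up routinely after applying $*$ and using that $d^c$ and $d^\Lambda$ are Hodge-adjoint up to sign.
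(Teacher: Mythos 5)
Your plan correctly identifies the crux — that $d^c\alpha=0$ and $d^\Lambda\alpha=(d^c)^*\alpha=0$ are genuinely different conditions — but the mechanism you propose to bridge them does not work, and this is a real gap rather than a detail. In step (1) you write $d^c\alpha = c\,\mathrm{Vol}$ and claim $c=0$ by pairing against $1$ or integrating (Stokes). But $c$ is a smooth function, not a constant: Stokes (or the pairing $\langle d^c\alpha,\mathrm{Vol}\rangle = \langle\alpha,(d^c)^*\mathrm{Vol}\rangle = 0$) only yields $\int_M c\,\mathrm{Vol}=0$, which is far from $c\equiv 0$; a top-degree form with vanishing integral need not vanish. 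Moreover, $d^c$-closedness cannot follow from $d\alpha=0$ and $d^\Lambda\alpha=0$ alone by any soft argument: in degree $3$ on a $4$-manifold, $d\alpha=0$ says $\bar\partial\alpha^{2,1}+\partial\alpha^{1,2}=0$ while $d^c\alpha=0$ says $\bar\partial\alpha^{2,1}-\partial\alpha^{1,2}=0$, and the second-order condition $(dd^\Lambda)^*\alpha=0$ must enter. The paper's argument uses it essentially: writing $\alpha=L(\gamma^{1,0}+\gamma^{0,1})$ (Lefschetz decomposition, with $\Lambda L=\mathrm{id}$ on $1$-forms), the condition $d^\Lambda\alpha=0$ gives $d(\gamma^{1,0}+\gamma^{0,1})=0$, and the condition $(dd^\Lambda)^*\alpha=0$, rewritten as $d^*d^c\alpha=0$ and analyzed by bidegree (only the $(1,1)$-components of $d\gamma^{1,0}$, $d\gamma^{0,1}$ survive after wedging with $\omega$), yields $d^*d\,L(\gamma^{1,0}-\gamma^{0,1})=0$; an integration by parts on the compact manifold then gives $dL(\gamma^{1,0}-\gamma^{0,1})=0$, which is exactly $d^c\alpha=0$. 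None of this is captured by your Stokes argument.

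The converse inclusion in your step (2) is similarly underestimated. Dualizing by $*$ sends $\H^3_{d+d^c}$ to a space of $1$-forms of Aeppli type, whose defining conditions include the second-order equation $dd^c\beta=0$; these are not "simultaneously closed and coclosed for the relevant operators," so the comparison of the dualized spaces is not a short formal check — it is the same nontrivial problem in disguise. In the paper, $d^\Lambda\alpha=0$ is deduced from $(dd^c)^*\alpha=0$ again via the primitive decomposition ($d^*d^\Lambda\alpha = d^*d(\gamma^{1,0}+\gamma^{0,1})$, hence $d(\gamma^{1,0}+\gamma^{0,1})=0$ after integrating by parts), while $(dd^\Lambda)^*\alpha=0$ comes from $d^c\alpha=0$. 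So both inclusions hinge on the Lefschetz decomposition of $3$-forms, dimension-specific bidegree bookkeeping, and integration-by-parts arguments exploiting the second-order harmonicity conditions; your proposal names the right ingredients ($d^\Lambda=(d^c)^*$, Hodge duality, the almost Kähler hypothesis) but does not supply the decomposition-plus-ellipticity mechanism that actually closes the argument.
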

\begin{proof}
    We recall that $d^\Lambda = (d^c)^*$ and that the explicit expression of the space of harmonic forms that are involved is
    \begin{align*}
        &\H^3_{d+d^c} = \{ \alpha \in A^3 : d \alpha =0, \, d^c \alpha =0, \, (d d^c)^* \alpha =0 \},\\
        &\H^3_{d+d^\Lambda} = \{ \alpha \in A^3 : d \alpha =0, \, d^\Lambda \alpha =0, \, (d d^\Lambda)^* \alpha =0 \}.
    \end{align*}
    We prove that $\H^3_{d + d^c} \subseteq  \H^3_{d + d^\Lambda}$. Let $\alpha \in \H^3_{d + d^c}$. We write $\alpha$ using the Lefschetz decomposition and the bidegree as
    \[
    \alpha = L (\gamma^{1,0} + \gamma^{0,1}).
    \]
    Since $\alpha$ is both $d$-closed and $d^c$-closed, we have that 
    \[
    L d\gamma^{1,0} =0 \quad \text{and} \quad L d\gamma^{0,1} =0.
    \]
    The equation $(d d^c)^* \alpha =0$ can be written in terms of $d^\Lambda$ as 
    \begin{align*}
        0 &= d^* d^\Lambda \alpha = d^* d^\Lambda L (\gamma^{1,0} + \gamma^{0,1}) = \\
        &= d^* (d\Lambda - \Lambda d ) L (\gamma^{1,0} + \gamma^{0,1}) = \\
        &= d^* d \Lambda L (\gamma^{1,0} + \gamma^{0,1})= \\
        &= d^* d (\gamma^{1,0} + \gamma^{0,1}).
    \end{align*}
    In particular, this implies that 
    \begin{equation}\label{d:gamma}
        d (\gamma^{1,0} + \gamma^{0,1}) =0.
    \end{equation}
    To prove that $\alpha$ is $(d + d^\Lambda)$-harmonic, we observe that $d\alpha=0$ and $( d d^\Lambda)^* \alpha =0 $ since $d^* d^c \alpha =0$. Finally, we have that
    \[
    d^\Lambda \alpha = (d\Lambda - \Lambda d) \alpha = d \Lambda L (\gamma^{1,0} + \gamma^{0,1}) = d (\gamma^{1,0} + \gamma^{0,1}) =0,
    \]
    by \eqref{d:gamma}, proving the inclusion $\H^3_{d + d^c} \subseteq  \H^3_{d + d^\Lambda}$. \\
    For the opposite inclusion, let $\alpha \in \H^3_{d + d^\Lambda}$ and let $\alpha = L (\gamma^{1,0} + \gamma^{0,1})$ be its primitive decomposition. Then we have that
    \begin{equation}\label{d:gamma:bis}
    0 = d^\Lambda \alpha = d (\gamma^{1,0} + \gamma^{0,1}).
    \end{equation}
    Reasoning as above, this implies that $(d d^c)^* \alpha=0$. Using $(d d^\Lambda)^*$-closedness of $\alpha$, we can compute
    \begin{align*}
        0 &= d^* d^c \alpha = d^* J^{-1} d J L (\gamma^{1,0} + \gamma^{0,1}) =\\
        &= i \, d^* J^{-1} d L (\gamma^{1,0} - \gamma^{0,1}) =\\
        &= i \, d^* J^{-1} L (d \gamma^{1,0} - d \gamma^{0,1}).
    \end{align*}
    By bidegree reason and since we are on a $4$-manifold, the only parts of the differential that are left after applying $L$ are those of bidegree $(1,1)$, so that we have
    \begin{align*}
        0 &= i \, d^* J^{-1} L (\bar \partial \gamma^{1,0} - \partial \gamma^{0,1}) = \\
        &= i \, d^*  L (\bar \partial \gamma^{1,0} - \partial \gamma^{0,1}) =\\
        &= i \, d^*  L (d \gamma^{1,0} - d \gamma^{0,1}) = \\
        &= i \, d^* d L (\gamma^{1,0} - \gamma^{0,1}). 
    \end{align*}
    Thus we obtained that $d L (\gamma^{1,0} - \gamma^{0,1})=0$. Therefore
    \[
    d^c \alpha = J^{-1} d L (\gamma^{1,0} - \gamma^{0,1}) =0,
    \]
    showing that $\alpha \in \H^3_{d + d^c} $. This completes the proof.
\end{proof}

\begin{cor}\label{cor:numerical:equality}
    If $(M,J,g,\omega)$ is an almost K\"ahler $4$-manifold, then
    \[
    h^3_{d+d^c} = h^3_{d+d^\Lambda}.
    \]
\end{cor}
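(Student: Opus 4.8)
The plan is to read off this numerical identity directly from the equality of vector spaces established in Theorem \ref{thm:h3}. That theorem asserts $\H^3_{d+d^c} = \H^3_{d+d^\Lambda}$ as subspaces of $A^3$, so in particular the two spaces have the same complex dimension. Recalling the definition $h^3_{d+d^c} = \dim_\C \H^3_{d+d^c}$ from \eqref{bc:numbers}, together with the analogous notation $h^3_{d+d^\Lambda} = \dim_\C \H^3_{d+d^\Lambda}$ for the Tseng--Yau harmonic forms, this equality of dimensions is precisely the claimed identity. Both numbers are finite: $\H^3_{d+d^c}$ is finite-dimensional by Proposition \ref{prop:ellipticity}, and $\H^3_{d+d^\Lambda}$ is finite-dimensional by the Hodge theory for the $(d+d^\Lambda)$-Laplacian recalled in Section \ref{sec:prel}. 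Thus the argument is a one-line consequence of Theorem \ref{thm:h3}, and there is no genuine obstacle here: all the work has already been done in proving that theorem (whose delicate point was the interplay between the Lefschetz decomposition, the bidegree components, and the identities $d^\Lambda = (d^c)^*$, $d^\Lambda = [d,\Lambda]$ on a $4$-manifold).

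For completeness one may add an observation that strengthens the statement: combining with the Tseng--Yau isomorphism $H^3_{d+d^\Lambda} \cong \H^3_{d+d^\Lambda}$ recalled in Section \ref{sec:prel}, the identity says that $h^3_{d+d^c}$ coincides with the third symplectic Bott--Chern number of $(M,\omega)$, and hence admits an expression in terms of Betti numbers and the ranks of the relevant Lefschetz maps via the primitive decomposition of those cohomologies. This is not needed for the statement as given, so I would relegate it to a remark rather than include it in the proof itself.
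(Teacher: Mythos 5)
Your proposal is correct and matches the paper's reasoning exactly: the corollary is an immediate consequence of Theorem \ref{thm:h3}, since the equality of the spaces $\H^3_{d+d^c}$ and $\H^3_{d+d^\Lambda}$ forces the equality of their (finite) complex dimensions. No further argument is needed.
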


In light of theorem \ref{thm:h3}, we ask what is the relation between $\H^k_{d+d^c}$ and $\H^k_{d+d^\Lambda}$ for $k \neq 3$. The next propositions answer the question.

\begin{proposition}\label{prop:inclusion:h1}
    Let $(M,J,g,\omega)$ be an almost K\"ahler $4$-manifold. Then 
    \[
    \H^2_{d+d^c} \subseteq \H^2_{d+d^\Lambda}.
    \]
    Furthermore, there exists an almost K\"ahler $4$-manifold such that
    \[
    \H^2_{d+d^c} \subsetneq \H^2_{d+d^\Lambda}.
    \]
\end{proposition}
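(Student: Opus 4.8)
The plan is to derive the inclusion as an immediate consequence of Theorem~\ref{thm:decomposition} and to obtain strictness from a dimension count on the Kodaira--Thurston manifold. So I would begin with an arbitrary $\alpha\in\H^2_{d+d^c}$ and, using Theorem~\ref{thm:decomposition}, write $\alpha=c\,\omega+\beta^-+\beta^{(2,0)(0,2)}$ with $c\in\C$, $\beta^-\in\H^-_g$ and $\beta^{(2,0)(0,2)}\in\H^{(2,0)(0,2)}_J$. Each summand is $d$-harmonic: the fundamental form $\omega$ because $d\omega=0$ on an almost K\"ahler manifold and $d^*\omega=-*d*\omega=-*d\omega=0$ (using $*\omega=\omega$), and $\beta^-$, $\beta^{(2,0)(0,2)}$ by the very definitions of $\H^-_g$ and $\H^{(2,0)(0,2)}_J$. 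Hence $\alpha$ is $d$-harmonic; in particular $d\alpha=0$ and $d^*\alpha=0$, which already gives the first defining condition of $\H^2_{d+d^\Lambda}$.

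Next I would use primitivity. On an almost Hermitian $4$-manifold the anti-self-dual $2$-forms are precisely the primitive $(1,1)$-forms, so $\Lambda\beta^-=0$, and a form of pure type $(2,0)+(0,2)$ is primitive as well, so $\Lambda\beta^{(2,0)(0,2)}=0$; therefore $\Lambda\alpha=c\,\Lambda\omega$ is constant and $d^\Lambda\alpha=d\Lambda\alpha-\Lambda d\alpha=0$, the second condition. Finally, since $d^\Lambda=(d^c)^*$ on an almost K\"ahler manifold we have $(dd^\Lambda)^*=(d^\Lambda)^*d^*=d^c d^*$, so $(dd^\Lambda)^*\alpha=d^c(d^*\alpha)=0$ because $d^*\alpha=0$. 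All three defining conditions hold, so $\alpha\in\H^2_{d+d^\Lambda}$ and $\H^2_{d+d^c}\subseteq\H^2_{d+d^\Lambda}$.

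For the strict inclusion I would take $M$ to be the Kodaira--Thurston manifold with an invariant symplectic form $\omega$ and the maximally non-integrable almost complex structure of Example~\ref{ex:KT}. By Corollary~\ref{cor:upper:bound}, $h^2_{d+d^c}=b^-+1+h^-_J\le b_2=4$, whereas a direct computation with left-invariant forms (legitimate here, in accordance with the equality $h^3_{d+d^\Lambda}=h^3_{d+d^c}=3$ from Corollary~\ref{cor:numerical:equality}) shows that the invariant $2$-forms in $\ker d\cap\ker d^\Lambda$ span a $5$-dimensional space while $dd^\Lambda$ vanishes identically on invariant $2$-forms, so $h^2_{d+d^\Lambda}=5$; since $4<5$, the inclusion is strict on $M$. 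The inclusion itself is essentially formal once Theorem~\ref{thm:decomposition} is in hand, so the real obstacle is this last step: one must justify that invariant forms compute the Tseng--Yau cohomology $H^\bullet_{d+d^\Lambda}$ of the Kodaira--Thurston manifold (or cite the computation of Tseng and Yau) and then carry out the bookkeeping of $d$, $\Lambda$ and $dd^\Lambda$ on the invariant complex.
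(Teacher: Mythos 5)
Your proof is correct and follows essentially the same route as the paper: the inclusion is deduced from the decomposition of Theorem~\ref{thm:decomposition} (the paper checks $d^\Lambda\alpha=0$ and $dd^\Lambda*\alpha=0$ directly, while you equivalently use $d$-harmonicity, primitivity and $d^\Lambda=(d^c)^*$), and strictness comes from the same Kodaira--Thurston example, comparing $h^2_{d+d^c}\le 4$ with $h^2_{d+d^\Lambda}=5$ as computed by Tseng and Yau, which is exactly the citation the paper uses.
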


\begin{proof}
    Let $\alpha \in \H^2_{d+d^c}$. Using the decomposition from theorem \ref{thm:decomposition}, we can write
    \[
    \alpha = c \, \omega + \gamma^{1,1} + \alpha^{2,0} + \alpha^{0,2},
    \]
    with $c$ constant, $\gamma^{1,1}$ primitive and anti-self-dual, and $\alpha^{2,0} + \alpha^{0,2}$ primitive and self-dual. We can compute that
    \[
    d^\Lambda \alpha = (d \Lambda - \Lambda d) \alpha = d c =0,
    \]
    and
    \[
    d d^\Lambda * \alpha = d d^\Lambda (c \omega - \gamma^{1,1} + \alpha^{2,0} + \alpha^{0,2}) = d d^\Lambda ( \alpha - 2 \gamma^{1,1} ) = - 2 d d^\Lambda \gamma^{1,1} =0,
    \]
    so that $\alpha \in \H^2_{d + d^\Lambda}$. For the second part of the proposition, we give an explicit example. On the Kodaira-Thurston manifold endowed with the almost K\"ahler structure from example \ref{ex:KT}, we have that $h^1_{d + d^c} = 2$ and $h^2_{d + d^c}=4$ (Proposition \ref{prop:KT:harmonic}), while $h^1_{d + d^\Lambda} = 3$ and $h^2_{d + d^\Lambda}=5$ \cite[Example 3.4]{TY12a}.
\end{proof}

\begin{proposition}
    Let $(M,J,g,\omega)$ be an almost K\"ahler $4$-manifold. Then
    \[
    \H^1_{d + d^c} \subseteq \H^1_{d + d^\Lambda}
    \]
    if and only if every $d$-closed $(1,0)$-form is also $d$-harmonic.
\end{proposition}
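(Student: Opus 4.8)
The plan is to reduce both sides of the equivalence to a statement about $d$-harmonic $1$-forms, the key point being the identification $\H^1_{d+d^\Lambda}=\H^1_d$. First I would compute $\H^1_{d+d^\Lambda}$ explicitly on $1$-forms. By the same reasoning that gives the explicit form of $\H^k_{d+d^c}$, a $1$-form $\alpha$ is $(d+d^\Lambda)$-harmonic precisely when $d\alpha=0$, $d^\Lambda\alpha=0$ and $(dd^\Lambda)^*\alpha=0$. Since $\Lambda$ vanishes on $A^1$, one has $d^\Lambda\alpha=-\Lambda d\alpha$ for $\alpha\in A^1$, so $d^\Lambda\alpha=0$ (and hence $dd^\Lambda\alpha=0$) is automatic once $\alpha$ is $d$-closed; thus $\H^1_{d+d^\Lambda}=\{\alpha\in A^1:d\alpha=0,\ (dd^\Lambda)^*\alpha=0\}$. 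On the almost K\"ahler manifold $d^\Lambda=(d^c)^*$, so $(d^\Lambda)^*=d^c$ and $(dd^\Lambda)^*=(d^\Lambda)^*d^*=d^c d^*$. For $\alpha\in A^1$ the form $d^*\alpha$ is a function, and $d^c(d^*\alpha)=i(\bar\partial-\partial)(d^*\alpha)=0$ forces $\partial(d^*\alpha)=\bar\partial(d^*\alpha)=0$ (different bidegrees), i.e.\ $d^*\alpha$ is constant; integrating $\langle d^*\alpha,1\rangle=\langle\alpha,d1\rangle=0$ over the compact $M$ shows the constant is $0$. Hence $\H^1_{d+d^\Lambda}=\{\alpha\in A^1:d\alpha=0,\ d^*\alpha=0\}=\H^1_d$.

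Next I would invoke Proposition \ref{prop:metric:indep} and Proposition \ref{prop:decomposition:1forms} to write $\H^1_{d+d^c}$ as the space of $d$- and $d^c$-closed $1$-forms, which splits as $(\ker d\cap A^{1,0})\oplus(\ker d\cap A^{0,1})$ with the second summand the complex conjugate of the first. Since $\Delta_d$ is a real operator, $\H^1_d$ is conjugation-invariant, so $\H^1_{d+d^c}\subseteq\H^1_d$ holds if and only if $\ker d\cap A^{1,0}\subseteq\H^1_d$, i.e.\ if and only if every $d$-closed $(1,0)$-form is $d$-harmonic. Combined with the identification $\H^1_{d+d^\Lambda}=\H^1_d$ from the first step, this is exactly the claimed equivalence. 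For the forward implication one additionally uses that $d$-closed $(1,0)$-forms are automatically $d^c$-closed (Remark \ref{rem:bigraded}), so that $\ker d\cap A^{1,0}$ really sits inside $\H^1_{d+d^c}$ and the hypothesis can be applied to it.

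The only genuine content is the computation $\H^1_{d+d^\Lambda}=\H^1_d$; the rest is formal manipulation with the $\mathfrak{sl}(2,\mathbb{C})$-relations and the bidegree splitting of $1$-forms. The points to handle with a little care are that $d^c f=0$ forces a (possibly complex-valued) function $f$ to be constant, which follows from the bidegree argument above, and that a constant codifferential of a $1$-form must vanish by integration on $M$. I do not anticipate a real obstacle here; the statement is essentially a bookkeeping consequence of facts already established in the excerpt, cross-checked by the Kodaira–Thurston computation ($h^1_{d+d^\Lambda}=3=b_1$) appearing in Proposition \ref{prop:inclusion:h1}.
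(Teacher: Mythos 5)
Your argument is correct, but it proceeds differently from the paper. You first prove the clean identity $\H^1_{d+d^\Lambda}=\H^1_d$: using $(dd^\Lambda)^*=(d^\Lambda)^*d^*=d^c d^*$ (from $d^\Lambda=(d^c)^*$ on almost K\"ahler manifolds), the bidegree argument showing $d^c f=0$ forces a function $f$ to be constant, and the integration step $\langle d^*\alpha,1\rangle=\langle\alpha,d1\rangle=0$ killing the constant; then the equivalence reduces to $\H^1_{d+d^c}=(\ker d\cap A^{1,0})\oplus(\ker d\cap A^{0,1})\subseteq\H^1_d$, which by conjugation-invariance of $\H^1_d$ and the automatic $d^c$-closedness of $d$-closed $(1,0)$-forms is exactly the stated condition. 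The paper instead argues directly with the three harmonicity conditions in dimension $4$: in one direction it checks $d\alpha=0$, $d^\Lambda\alpha=0$ and $dd^\Lambda*\alpha=-d\Lambda d*\alpha=0$ using $d$-harmonicity of the bidegree parts; in the other it writes $\bar\partial*\alpha^{1,0}=f\omega^2$, uses $d\Lambda(f\omega^2)=df\wedge\omega$ up to constants and the injectivity of $L\colon A^1\to A^3$ to get $df=0$, hence $d^*\alpha^{1,0}=0$. Your route buys a statement of independent interest ($\H^1_{d+d^\Lambda}=\H^1_d$ on any compact almost K\"ahler manifold, not just in dimension $4$, consistent with $h^1_{d+d^\Lambda}=3=b_1$ on the Kodaira--Thurston manifold), and it makes transparent why the condition on $d$-closed $(1,0)$-forms is the exact obstruction; the paper's computation is more hands-on, stays entirely within the Lefschetz formalism of the $4$-dimensional setting, and does not need the adjoint identity $(d^\Lambda)^*=d^c$ or the integration argument. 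The only points you should state explicitly are compactness (needed for the integration step) and, if one is fussy, connectedness when concluding that a $d^c$-closed function is a single constant; both are standing assumptions in the relevant sections.
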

\begin{proof}
    Assume that every $d$-closed $(1,0)$-form is also $d$-harmonic and let $\alpha \in \H^1_{d + d^c}$. Then $d\alpha =0$ and $d^\Lambda \alpha =0$ for degree reasons. Furthermore, we have that $d d^\Lambda * \alpha = - d \Lambda d * \alpha = 0$ since the $(1,0)$ and $(0,1)$-degree parts of $\alpha$ are $d$-harmonic.\\
    Conversely, assume that $\H^1_{d + d^c} \subseteq \H^1_{d + d^\Lambda}$. We need to prove that if every $d$-closed $(1,0)$-form $\alpha^{1,0}$ is $(d d^\Lambda)^*$-closed, then it is also $d$-harmonic. We have that
    \[
    0 = d d^\Lambda * \alpha^{1,0} = - d \Lambda d * \alpha^{1,0} = -d \Lambda ( \bar \partial * \alpha^{1,0}).
    \]
    Setting $\bar \partial * \alpha^{1,0} = f \omega^2$, it follows that
    \[
    0 = - d \Lambda (f \omega^2) = - d(f \omega) = - df \wedge \omega.
    \]
    Since $L \colon A^1 \rightarrow A^3$ is an isomorphism, we have that $df =0$. Noting that
    \[
    d d^* \alpha^{1,0} = d * (f \omega^2) = df =0,
    \] 
    we conclude that $\alpha^{1,0}$ is $d$-harmonic.
\end{proof}
Example \ref{ex:sol3} shows that in general the inclusion can be strict.

We conclude the section showing that the number $h^1_{d + d^c}$ can be used to distinguish between almost complex structures compatible with the same symplectic form.

\begin{proposition}\label{prop:symplectic}
    There exists a symplectic $4$-manifold $(M,\omega)$ admitting two $\omega$-compatible almost K\"ahler structures $J_1$ and $J_2$ such that 
    \[
    h^1_{d+d^c}(J_1) \neq h^1_{d+d^c} (J_2).
    \]
    In particular, $h^1_{d+d^c}$ distinguishes between almost complex structures compatible with the same symplectic form.
\end{proposition}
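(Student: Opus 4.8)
The plan is to produce the example explicitly on a nilmanifold, concretely the Kodaira--Thurston manifold. The starting point is the reduction
\[
h^1_{d+d^c}(J) = 2\dim_\C\bigl(\ker d\cap A^{1,0}_J\bigr),
\]
which follows from Proposition \ref{prop:metric:indep} (describing $\H^1_{d+d^c}$ as the $d$- and $d^c$-closed $1$-forms) together with Proposition \ref{prop:decomposition:1forms} and Remark \ref{rem:h1:even}, since $\H^1_{d+d^c}=(\ker d\cap A^{1,0})\oplus(\ker d\cap A^{0,1})$ and the two summands are conjugate. Thus it suffices to exhibit a symplectic $4$-manifold carrying two compatible almost complex structures admitting a different number of linearly independent $d$-closed $(1,0)$-forms.

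Let $M$ be the Kodaira--Thurston manifold with invariant coframe $e^1,\dots,e^4$, $de^1=de^2=de^3=0$, $de^4=e^1\wedge e^2$, and set $\omega=e^1\wedge e^3+e^2\wedge e^4$, which is symplectic. I would take $J_1$ to be the almost complex structure with $A^{1,0}_{J_1}=\langle e^1+ie^3,\ e^2+ie^4\rangle$; a short linear-algebra check shows $\omega$ is $J_1$-compatible (and, $M$ being non-K\"ahler, $J_1$ is forced to be non-integrable), so $(M,J_1,\omega)$ is almost K\"ahler. This is essentially the structure of Example \ref{ex:KT}, for which $h^1_{d+d^c}(J_1)=2$; alternatively one checks directly that $\ker d\cap A^{1,0}_{J_1}=\C\langle e^1+ie^3\rangle$ by the Fourier argument described below with $f\equiv 1$.

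For the second structure I would fix a smooth, positive, non-constant function $f$ on $M$ depending only on the base coordinate $x^2$ (with $e^2=dx^2$) and set $A^{1,0}_{J_2}=\langle e^1+ife^3,\ e^2+ie^4\rangle$. Rescaling the first generator to $f^{-1/2}(e^1+ife^3)$ one sees that $\omega$ is still the fundamental form, so $(M,J_2,\omega)$ is almost K\"ahler. The heart of the argument is to show $\ker d\cap A^{1,0}_{J_2}=0$: a $d$-closed $(1,0)$-form has de Rham class in $\langle[e^1],[e^2],[e^3]\rangle$, hence can be written $\alpha=ae^1+be^2+ce^3+dk$ with $a,b,c$ constants and $k\in C^\infty(M)$, and imposing that $\alpha$ be of type $(1,0)$ for $J_2$ (i.e.\ $\alpha\wedge(e^1+ife^3)\wedge(e^2+ie^4)=0$) gives a first-order linear system for $k$. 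Expanding $k$ in Fourier series along the torus directions and using periodicity forces first $b=0$ and $k$ independent of $x^2$ and $x^4$, and then --- here it is essential that $f$ varies in $x^2$ --- it forces $a=c=0$ and $k$ constant, so $\alpha=0$. Hence $h^1_{d+d^c}(J_2)=0$, and therefore $h^1_{d+d^c}(J_1)=2\neq 0=h^1_{d+d^c}(J_2)$, which is the assertion.

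I expect the main obstacle to be precisely this last vanishing. Since $J_2$ is \emph{not} left-invariant, the usual symmetrisation argument reducing computations on a nilmanifold to invariant forms does not apply to the space of $d$-closed $(1,0)$-forms, so non-invariant closed $(1,0)$-forms must be ruled out directly, which is what the Fourier/periodicity analysis (or an equivalent elliptic estimate) accomplishes. A subordinate point requiring care is checking that $J_1$ and $J_2$ are compatible with the \emph{same} symplectic form $\omega$; this is where the precise shapes of $\omega$ and of the two coframes have to be chosen to match. Alternatively, one may simply invoke the explicit computations for the Kodaira--Thurston manifold carried out in Section \ref{sec:examples}.
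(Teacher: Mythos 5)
Your proposal is correct, but it takes a genuinely different route from the paper. The paper does not argue on the Kodaira--Thurston manifold at all: it works on the solvmanifold $(\Gamma\backslash Sol(3))\times S^1$ with a family of \emph{left-invariant} almost K\"ahler structures $J_t$, all compatible with the fixed invariant symplectic form $\omega=e^{12}+e^{34}$ (Example \ref{ex:sol3}); there the condition $\bar\mu\alpha=0$ forces $g=c_tf$, and substituting into the remaining components of $d\alpha=0$ yields a pointwise linear relation which kills $f$ for $t\neq0$, so $h^1_{d+d^c}(J_0)=2$ while $h^1_{d+d^c}(J_t)=0$ for small $t\neq 0$ --- a purely algebraic computation, valid for arbitrary (not necessarily invariant) coefficient functions, and it moreover exhibits a continuous family of compatible structures across which $h^1_{d+d^c}$ jumps. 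You instead stay on the Kodaira--Thurston manifold and destroy $h^1_{d+d^c}$ by a \emph{non-invariant} deformation $e^1+ife^3$ with $f=f(x^2)$ nonconstant, keeping the same $\omega$ (your rescaling check of the fundamental form is the right compatibility verification, and the reduction $h^1_{d+d^c}=2\dim_\C(\ker d\cap A^{1,0})$ is exactly the one the paper uses). The price is the step you yourself flag: since $J_2$ is not invariant, symmetrization is unavailable and the vanishing must be proved directly. Your reduction does work: writing $\alpha=ae^1+be^2+ce^3+dk$, the type condition $\alpha\wedge(e^1+ife^3)\wedge(e^2+ie^4)=0$ collapses to the two scalar equations $X_3k+c=if\,(a+X_1k)$ and $X_4k=i\,(b+X_2k)$; the second, decomposed along the \emph{central} circle, kills all modes with nontrivial central character (note this is not a plain torus Fourier series, since $X_2=\partial_{x^2}-x^1\partial_{x^4}$ twists those modes --- one needs the boundedness versus exponential-growth argument, or an equivalent), and on the remaining modes forces $b=0$ and independence of $x^2$; the first equation then uses nonconstancy of $f$ to force $a+X_1k\equiv 0$ and $X_3k\equiv-c$, hence $a=c=0$ and $k$ constant by periodicity, giving $h^1_{d+d^c}(J_2)=0\neq 2=h^1_{d+d^c}(J_1)$. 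So both proofs are valid: the paper's buys an elementary, invariant computation and a one-parameter family; yours buys a statement on the most familiar example and shows that a single non-invariant compatible structure already suffices, at the cost of the analytic Fourier/growth step, which should be written out carefully for the central modes.
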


\begin{proof}
    Let $M$ be the $4$-manifold $(\Gamma \backslash Sol(3)) \times \S^1$, where $Sol(3)$ is the only $3$-dimensional solvable, non-nilpotent Lie group and $\Gamma$ is a suitable lattice. Then $M$ admits a symplectic form $\omega$ and a family of $\omega$-compatible almost complex structures $J_t$, $t \in \R$, $\abs{t} < \epsilon$, such that $h^1_{d+d^c}(J_0) = 2$ and $h^1_{d+d^c} (J_t) = 0$ for $t \neq 0$, $\abs{t} < \epsilon$ (see example \ref{ex:sol3}).
\end{proof}

\section{Examples}\label{sec:examples}

In this section we give several examples of almost complex manifolds on which we compute the spaces of harmonic forms and the cohomologies we introduced. We will compute the number $h^1_{d+d^c}$ and show that a bigraded decomposition of Bott-Chern cohomology does not hold in general.

\begin{example}[The Kodaira-Thurston manifold]\label{ex:KT}

Consider the $3$-dimensional Heisenberg group
\[
\mathbb{H}_3 := \left \{ 
\begin{bmatrix}
1 & x & z \\
  & 1 & y \\
  &   & 1 \\
\end{bmatrix}: x,y,z \in \R \right \}.
\]
The Kodaira-Thurston manifold is the compact quotient
\[
M:= \mathbb{H}_3 / ( \mathbb{H}_3 \cap SL (3, \Z) ) \times \S^1.
\]
Denote by $t$ the coordinate on $\S^1$. A frame of left-invariant vector fields on $M$ is given by
\[
\left \{ e_1 = \partial_t, \, e_2 = \partial_x, \, e_3 = \partial_y + x \, \partial_z, \, e_4 = \partial_z \right \},
\]
and the dual co-frame of left-invariant $1$-forms is 
\[
\left \{ e^1 = dt, \, e^2 = dx , \, e^3 = dy, \, e^4 = dz - x \, dy \right \}.
\]
The only non-vanishing Lie bracket on vector fields is $[ e_2, e_3]=e_4$, thus the only non-vanishing differential is $de^4 = - e^{23}$. $M$ admits a symplectic form
\[
\omega = e^{12} + e^{34}.
\]
Consider the $\omega$-compatible almost complex structure $J$ given by
\begin{equation}\label{eq:std:acs:kt}
Je^1 = e^2, \quad J e^3 = e^4.
\end{equation}
A co-frame of $(1,0)$ forms is 
\[
\phi^1= e^1 + i e^2, \quad  \phi^2= e^3 + i e^4,
\]
with differentials
\[
d \phi^1 =0, \quad d \phi^2 = - \frac{1}{4} ( \phi^{12} + \phi^{1 \bar 2} - \phi^{\bar12} - \phi^{\bar 1 \bar 2}).
\]
Denote by $\xi_1$ and $\xi_2$ the dual frame of $(1,0)$-vector fields.\\
We first compute the spaces of $(d+d^c)$-harmonic forms.

\begin{proposition}\label{prop:KT:harmonic}
    The spaces of $(d+d^c)$-harmonic forms on the Kodaira-Thurston manifold endowed with the almost K\"ahler structure \eqref{eq:std:acs:kt} are the following
    \begin{align*}
        &\H^0_{d+d^c} = \C, \\
        &\H^1_{d+d^c} = \C \langle \phi^1, \, \phi^{\bar 1} \rangle, \\
        &\H^2_{d+d^c} = \C \langle \phi^{1 2} + \phi^{\bar 1 \bar 2}, \, \phi^{1 \bar 2} + \phi^{\bar 1 2}, \, \phi^{1 \bar 1}, \, \phi^{2 \bar 2} \rangle, \\
        &\H^3_{d+d^c} = \C \langle \phi^{12\bar 2}, \, \phi^{2 \bar 1 \bar 2}, \, \phi^{12 \bar 1} - \phi^{1 \bar 1 \bar 2} \rangle, \\
        &\H^4_{d+d^c} = \C \langle \phi^{12\bar 1\bar 2} \rangle.
    \end{align*}
\end{proposition}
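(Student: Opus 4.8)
The plan is to compute the kernels of the four operators that cut out $\H^k_{d+d^c}$ directly on left-invariant forms, exploiting the standard fact (which one should at least mention, citing the symmetrization principle of Nomizu-type arguments available for nilmanifolds, or simply verifying ellipticity as in Proposition~\ref{prop:ellipticity}) that harmonic forms for $\Delta_{d+d^c}$ are left-invariant on this nilmanifold, so the entire computation reduces to linear algebra on the finite-dimensional complex $\Lambda^\bullet \langle \phi^1, \phi^2, \phi^{\bar 1}, \phi^{\bar 2}\rangle$. First I would record the full multiplication table of $d$, hence of $d^c = i(\bar\delta - \delta)$, on all $16$ basis monomials, using $d\phi^1 = d\phi^{\bar 1} = 0$ and $d\phi^2 = -\tfrac14(\phi^{12} + \phi^{1\bar2} - \phi^{\bar1 2} - \phi^{\bar1\bar2})$ together with its conjugate; from $Je^1 = e^2$, $Je^3 = e^4$ one reads off the Hodge star on the natural orthonormal-up-to-scale frame, which gives explicit formulas for $*$ and hence for $(dd^c)^*= -*dd^c*$.

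Degrees $0$, $1$, $4$ are immediate: $\H^0_{d+d^c}=\C$ and $\H^4_{d+d^c} = \C\langle\phi^{12\bar1\bar2}\rangle$ by Proposition~\ref{prop:metric:indep}, and $\H^1_{d+d^c} = \ker d \cap \ker d^c \cap A^1$, which by \eqref{BC:cohom:decomposition:h1} splits as $(\ker d \cap A^{1,0}) \oplus (\ker d \cap A^{0,1})$; since $d\phi^1 = 0$ but $d\phi^2 \ne 0$, and $\phi^1,\phi^2$ span $A^{1,0}$, one gets exactly $\C\langle\phi^1,\phi^{\bar1}\rangle$. For degree $2$ I would use $\H^2_{d+d^c} = \{\alpha : d\alpha = d^c\alpha = (dd^c)^*\alpha = 0\}$: decompose $\alpha = \alpha^{2,0} + \alpha^{1,1} + \alpha^{0,2}$, note as in the proof of Theorem~\ref{thm:decomposition} that $d$-closedness and $d^c$-closedness force $\alpha^{2,0}+\alpha^{0,2}$ and $\alpha^{1,1}$ to be separately $d$-closed, compute which linear combinations of $\phi^{12}, \phi^{\bar1\bar2}$ and of $\phi^{1\bar1},\phi^{1\bar2},\phi^{\bar12},\phi^{2\bar2}$ are $d$-closed, and then impose $(dd^c)^* = 0$ to cut down further; this should leave precisely the four forms listed. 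Degree $3$ is the genuinely substantive case: here $(dd^c)^*$ no longer vanishes automatically (there are $3$-forms on which $dd^c \ne 0$ because $J$ is non-integrable), so I would parametrize a general left-invariant $3$-form by its four coefficients, impose $d\alpha = 0$, $d^c\alpha = 0$, $(dd^c)^*\alpha = 0$, and solve the resulting linear system, expecting the solution space to be $\C\langle \phi^{12\bar2}, \phi^{2\bar1\bar2}, \phi^{12\bar1} - \phi^{1\bar1\bar2}\rangle$.

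The main obstacle — and the reason degree $3$ is flagged in Remark~\ref{rem:h1:even} as giving the nonstandard value $h^3 = 3$ — is keeping the $d^c$ and $(dd^c)^*$ computations honest: because $dd^c + d^cd \ne 0$ here, one cannot shortcut via $dd^c \simeq -d^cd$, and the third harmonic condition genuinely involves the second-order operator $*dd^c*$ rather than a first-order one. I would double-check the computation by two independent routes: verifying that the three claimed $3$-forms are $*$-images (up to sign) of the three claimed $1$-forms would be wrong since $\dim\H^1 = 2 \ne 3$, so instead I would cross-check against Proposition~\ref{prop:harmonic:iso} (which only relates $\H^k_{d+d^c}$ to $\H^{2m-k}_{dd^c}$, i.e.\ $\H^3_{d+d^c} \cong \H^1_{dd^c}$ via $*$) and against the relation $\H^3_{\delta+\bar\delta} \cap \H^3_{d^c+d} \subsetneq \H^3_{d+d^c}$ noted after Proposition~\ref{prop:harmonic:relations}, confirming that $\phi^{12\bar1}-\phi^{1\bar1\bar2}$ is the "extra" class not seen by the $\delta,\bar\delta$-theory. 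Finally I would confirm all five spaces are closed under complex conjugation (as they must be, since $\Delta_{d+d^c}$ is a real operator), which is a quick consistency check on the list.
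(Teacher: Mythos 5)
Your whole computation hinges on the claim that $(d+d^c)$-harmonic forms on the Kodaira--Thurston manifold are left-invariant, and the justifications you offer do not establish it. Nomizu-type symmetrization proves that de Rham (or, more generally, a cohomology known to be computed by invariant forms) has invariant representatives; combined with Hodge theory it gives invariance of $\Delta_d$-harmonic forms because each class has a \emph{unique} harmonic representative. For $\Delta_{d+d^c}$ this mechanism breaks down: the kernel only injects into $H^k_{d+d^c}$ (Proposition \ref{prop:BC:harmonic:inclusion}), and here $H^2_{d+d^c}$ is in fact infinite dimensional (Proposition \ref{prop:KT:cohomology}), so there is no Nomizu theorem for it and no uniqueness-in-a-class argument available; averaging a harmonic form gives an invariant harmonic form, but nothing forces the difference to vanish. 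Ellipticity (Proposition \ref{prop:ellipticity}) only gives finite dimensionality, not invariance. So your linear-algebra computation on the $16$ invariant monomials would only produce a \emph{lower} bound for each $\H^k_{d+d^c}$, and the substantive part of the statement --- that there are no non-invariant harmonic forms in degrees $2$ and $3$, and that $\H^1$ is exactly two-dimensional --- is left unproved. (A further small slip: on invariant $2$-forms the condition $(dd^c)^*\alpha=0$ is automatic, since every invariant $3$-form is $d$-closed, so it cannot ``cut down further''; all the cutting in degree $2$ comes from $d$- and $d^c$-closedness.)

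The paper closes exactly this gap by different means in each degree. In degree $1$ it writes $\alpha=f\phi^1+g\phi^2$ with \emph{arbitrary} smooth $f,g$ and shows $d\alpha=0$ forces $f$ constant and $g=0$. In degree $2$ it first exhibits the four invariant harmonic forms, then invokes the almost K\"ahler decomposition (Theorem \ref{thm:decomposition} and Corollary \ref{cor:upper:bound}) to get $h^2_{d+d^c}\le b_2=4$, which simultaneously pins down the dimension and shows all harmonic $2$-forms are invariant. In degree $3$ it does not solve a linear system at all: it uses Theorem \ref{thm:h3} to identify $\H^3_{d+d^c}$ with $\H^3_{d+d^\Lambda}$ and quotes Tseng--Yau's computation, which again covers non-invariant forms. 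If you want to salvage your route, you must either supply an argument excluding non-invariant harmonic forms in degrees $2$ and $3$ (e.g.\ reproduce these two bounds) or redo the computations with general smooth coefficient functions as the paper does in degree $1$; your consistency checks (conjugation-invariance, the comparison with $\H^3_{\delta+\bar\delta}$) are fine but do not address this issue.
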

\begin{proof}
    By proposition \ref{prop:metric:indep}, $\H^k_{d+d^c}$ is metric independent for $k\in \{0,1,4\}$. The claim for $k \in \{ 0,4 \}$ is immediate. For $k=1$, we need to establish which $1$-forms are both $d$-closed and $d^c$-closed. This is equivalent to find $d$-closed $(1,0)$-forms. Let $\alpha \in A^{1,0}$. Then
    \[
    \alpha = f \phi^1 + g \phi^2,
    \]
    with $f,g \in C^\infty(M)$. Writing explicitly the equation $d \alpha =0$ and separating the bidegree of the forms, we deduce that $f$ must be constant and $g=0$, hence $\H^1_{d+d^c} = \C \langle \phi^1, \phi^{\bar 1} \rangle.$\\
    Now we compute left-invariant, $(d+d^c)$-harmonic $2$-forms and we show that there is no other $(d+d^c)$-harmonic $2$-form. Let $\alpha \in A^2$. Assume that $\alpha$ is left-invariant, i.e., that 
    \[
    \alpha = a \phi^{12} + e \phi^{1 \bar 1} + f \phi^{1 \bar 2} + g \phi^{\bar 1 2} + h \phi^{2 \bar 2} + b \phi^{ \bar 1 \bar 2},
    \]
    with $a,b,e,f,g,h \in \C$. By computing separately the differential on the $(2,0)+(0,2)$-part and on the $(1,1)$-part of $\alpha$, we see that the constants must satisfy $a=b$ and $f=g$. Moreover, the condition $d d^c * \alpha =0$ is satisfied since all left-invariant $3$-forms on $M$ are $d$-closed. Thus we get
    \[
     \C \langle \phi^{1 2} + \phi^{\bar 1 \bar 2}, \phi^{1 \bar 2} + \phi^{\bar 1 2}, \phi^{1 \bar 1}, \phi^{2 \bar 2} \rangle \subseteq \H^2_{d+d^c}.
    \]
    Moreover, we also have that
    \[
     b_2 (M) = 4 = \dim_\C (\C \langle \phi^{1 2} + \phi^{\bar 1 \bar 2}, \phi^{1 \bar 2} + \phi^{\bar 1 2}, \phi^{1 \bar 1}, \phi^{2 \bar 2} \rangle) \le h^2_{d+d^c} \le b_2(M),
    \]
    by corollary \ref{cor:upper:bound}. This implies the equality of the spaces and that all $(d+d^c)$-harmonic $2$-forms are left-invariant. Finally, we know the space of $(d+d^c)$-harmonic $3$-forms thanks to theorem \ref{thm:h3} and the computations of Tseng and Yau \cite[section 3.4]{TY12a} rewritten in terms of complex forms.
\end{proof}

We proceed to show that the Bott-Chern cohomology of $(M,J)$ can be infinite dimensional. We already know that
\[
H^0_{d+d^c} \cong \C, \quad H^4_{d+d^c} \cong \C, \quad H^1_{d+d^c} = \C \langle \phi^1, \, \phi^{\bar 1} \rangle \cong \C^2,
\]
so that they are finite dimensional. We compute Bott-Chern cohomology on $2$-forms. Let 
    \[
    \alpha = a \phi^{12} + e \phi^{1 \bar 1} + f \phi^{1 \bar 2} + g \phi^{\bar 1 2} + h \phi^{2 \bar 2} + b \phi^{ \bar 1 \bar 2},
    \]
    with $a,b,e,f,g,h \in C^\infty(M)$, be a $2$-form. Direct computations show that $\alpha$ is $d$-closed and $d^c$-closed if and only if there exists functions $a,b,e,f,g,h$ satisfying the system
    \begin{equation}\label{eq:system}\tag{$\ast$}
        \begin{cases}
        \xi_{\bar 2} (a ) = \xi_{2} (b) =0, \\
        \frac{1}{4} (a-b) + \xi_{\bar 1} (a) =0, \\
        -\frac{1}{4} (a-b) + \xi_{1} (b) =0, \\
        \xi_1(h) = \xi_2 (f), \\
        \xi_{\bar 1} (h) = - \xi_{\bar 2} (g), \\
        \frac{1}{4} (g-f) - \xi_{2} (e) - \xi_1(g) =0 \\
        -\frac{1}{4} (g-f) + \xi_{\bar 2} (e) + \xi_{\bar 1}(f) =0. \\
        \end{cases}
    \end{equation}

    Thus, we can describe the Bott-Chern cohomology.

\begin{proposition}\label{prop:KT:cohomology}
    The $2^\text{nd}$ Bott-Chern cohomology group of the Kodaira-Thurston manifold endowed with the almost complex structure \eqref{eq:std:acs:kt} is
    \[
        H^2_{d+d^c} = \H \oplus \frac{\mathcal{I}}{\mathcal{J}},
    \]
    where
    \begin{align*}
    &\H = \{ a \phi^{12} + b \phi^{\bar 1\bar 2} :\eqref{eq:system} \text{ holds } \},\\
    &\mathcal{I} = \{ e \phi^{1 \bar 1} + f \phi^{1 \bar 2} + g \phi^{\bar 1 2} + h \phi^{2 \bar 2} : \eqref{eq:system} \text{ holds } \},
    \end{align*}
    and
    \[
    \mathcal{J} = \{ \xi_1 \xi_{\bar 1} (\theta) \phi^{ 1 \bar 1 }: \theta \in C^\infty(M), \, \xi_2(\theta)= \xi_{\bar 2}(\theta)=0 \}.
    \]
    Moreover, $H^2_{d+d^c}$ is infinite dimensional.
\end{proposition}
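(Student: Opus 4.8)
The plan is to analyze the system \eqref{eq:system} and isolate the contribution of each type of term. First I would observe that the equations involving $a,b$ decouple completely from those involving $e,f,g,h$: the first three lines of \eqref{eq:system} constrain only $a,b$, while the last four lines constrain only $e,f,g,h$. This gives the direct sum decomposition $H^2_{d+d^c} = \mathcal{H} \oplus \mathcal{I}/\mathcal{J}$ once we identify which $dd^c$-exact $2$-forms lie in $B^\bullet$. Indeed, a $dd^c$-exact $2$-form is $dd^c\theta$ for $\theta \in C^\infty(M)$, and the requirement $\theta \in B^0$ forces $(dd^c+d^cd)\theta=0$; computing $dd^c\theta$ explicitly in the frame $\phi^1,\phi^2$ and using \eqref{eq:delta:ddc}, one finds that $dd^c\theta$ is a multiple of $\phi^{1\bar 1}$ precisely when $\xi_2(\theta)=\xi_{\bar 2}(\theta)=0$, and in that case $dd^c\theta = c\,\xi_1\xi_{\bar 1}(\theta)\phi^{1\bar 1}$ for a nonzero constant $c$ — this is the definition of $\mathcal{J}$. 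Since $dd^c$-exact $2$-forms have no $\phi^{12}$, $\phi^{\bar 1\bar 2}$, $\phi^{1\bar 2}$, $\phi^{\bar 1 2}$ or $\phi^{2\bar 2}$ components, the quotient only affects the $\phi^{1\bar 1}$-coefficient, yielding exactly $\mathcal{H} \oplus \mathcal{I}/\mathcal{J}$.

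Next I would prove infinite-dimensionality by exhibiting an infinite-dimensional subspace of $\mathcal{I}/\mathcal{J}$. The cleanest choice is to set $a=b=f=g=0$ and look at forms $e\,\phi^{1\bar 1} + h\,\phi^{2\bar 2}$ with \eqref{eq:system} reducing to $\xi_1(h)=0$, $\xi_{\bar 1}(h)=0$, $\xi_2(e)=\xi_{\bar 2}(e)$ (from lines 6--7 with $f=g=0$), plus $\xi_2(h)$, $\xi_{\bar 2}(h)$ unconstrained. Actually the simplest subfamily: take $h=0$ and $e$ any function with $\xi_2(e)=\xi_{\bar 2}(e)=0$, i.e. $e$ depending only on the variables on which $\phi^1$ depends. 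Such $e$ form an infinite-dimensional space (functions on the $\phi^1$-directions), and the map $e \mapsto [e\,\phi^{1\bar 1}]$ has kernel exactly $\{e : e\,\phi^{1\bar 1} \in \mathcal{J}\} = \{\xi_1\xi_{\bar 1}(\theta) : \xi_2(\theta)=\xi_{\bar 2}(\theta)=0\}$ up to the constant $c$. Since $\xi_1\xi_{\bar 1}$ is (a constant multiple of) the Laplacian in the two $\phi^1$-directions on the torus factor, its image has infinite codimension in the space of all functions of those variables — e.g. it misses all the linear functions and more generally its cokernel is infinite-dimensional because on a $2$-torus $\{\partial^2/\partial u\,\partial v\}$-type operators (or the flat Laplacian) have infinite-dimensional... wait, the Laplacian on $T^2$ is surjective onto mean-zero functions, so I should instead use that $\xi_1\xi_{\bar 1}$ only sees two of the four real coordinates: functions $e(x,y)$ of the \emph{other} two coordinates (those entering $\phi^2$) are killed, but those are excluded by $\xi_2(e)=\xi_{\bar 2}(e)=0$; so more carefully, I would pick $e$ a function of a single coordinate appearing in $\phi^1$ that is constant — no. Let me restructure: the honest route is to note that $\theta$ must satisfy $\xi_2(\theta)=\xi_{\bar 2}(\theta)=0$, so $\theta=\theta(s)$ depends on only one real coordinate $s$ (the non-$t$ coordinate among the $\phi^1$ directions, say), hence $\xi_1\xi_{\bar 1}(\theta)$ spans only a $1$-dimensional...

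I should be candid: the main obstacle is pinning down exactly which functions $\theta$ lie in $B^0$ and what $\xi_1\xi_{\bar 1}(\theta)$ ranges over, versus the much larger space of admissible $e$. The cleanest argument avoids this delicacy: restrict to $a=b=e=0$, $f=g=0$, and forms $h\,\phi^{2\bar 2}$ with $\xi_1(h)=\xi_{\bar 1}(h)=0$; these satisfy \eqref{eq:system} trivially (all lines become $0=0$), they are never $dd^c$-exact since $\mathcal{J}$ consists only of multiples of $\phi^{1\bar 1}$, and the space of functions $h$ with $\xi_1(h)=\xi_{\bar 1}(h)=0$ — i.e. $h$ independent of the two $\phi^1$-coordinates, so $h=h(y,z)$ roughly — is infinite-dimensional. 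Hence $\mathcal{I}/\mathcal{J} \supseteq \{[h\,\phi^{2\bar 2}] : \xi_1(h)=\xi_{\bar 1}(h)=0\}$ is infinite-dimensional, and therefore so is $H^2_{d+d^c}$. I would present this $h\,\phi^{2\bar 2}$ family as the explicit witness, remarking that one must only check (i) these forms satisfy \eqref{eq:system}, which is immediate, and (ii) distinct such $h$ give distinct classes, which holds because the quotient $\mathcal{J}$ touches only the $\phi^{1\bar 1}$-component. This completes the proof.
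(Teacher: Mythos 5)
Your final argument is correct and essentially the paper's own: you identify the space of $d$- and $d^c$-closed $2$-forms via the decoupling of \eqref{eq:system} into its $(a,b)$- and $(e,f,g,h)$-parts, characterize the admissible $\theta \in B^0$ by $\xi_2(\theta)=\xi_{\bar 2}(\theta)=0$ with $dd^c\theta$ a constant multiple of $\xi_1\xi_{\bar 1}(\theta)\,\phi^{1\bar 1}$ (the paper carries out exactly this computation via $\partial^2\theta=0$ and the commutator $[\xi_1,\xi_2]=\tfrac14(\xi_2-\xi_{\bar 2})$), and establish infinite-dimensionality with the same witness family $h\,\phi^{2\bar 2}$ with $\xi_1(h)=\xi_{\bar 1}(h)=0$, for which the paper exhibits the explicit functions $\{\cos(2\pi n y)\}_{n\in\N}$. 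The abandoned detour through the $e\,\phi^{1\bar 1}$ classes is harmless since you discard it yourself, and the remaining loose phrasing (e.g.\ ``$h=h(y,z)$ roughly'') does not affect the conclusion, since functions of $y$ alone already give an infinite-dimensional space.
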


\begin{proof}
    The space of $d$-closed and $d^c$-closed $2$-forms is given by 
    \[
    \H \oplus \mathcal{I}.
    \]
    To compute Bott-Chern cohomology, we have to quotient by $d d^c \theta $, where $\theta \in C^\infty(M)$ is such that $(d d^c + d^c d) \theta=0$. By \eqref{eq:delta2}, $\theta$ must be $\partial^2$-closed and $\bar \partial^2$-closed. From the first condition, we have that 
    \[
    0 = \partial^2 \theta =  ( \xi_1 \xi_2 (\theta) - \xi_2 \xi_1(\theta) - \frac{1}{4} \xi_2(\theta)) \phi^{12} =- \frac{1}{4} \xi_{\bar 2} (\theta) \phi^{12},
    \]
    where the last equality follows from the commutator relation
    \[
    [ \xi_1, \xi_2] = \frac{1}{4} (\xi_2 - \xi_{\bar 2}).
    \]
    This implies that $\xi_{\bar 2} (\theta)=0$. Similarly, we have that $\xi_{2} (\theta)=0$. Finally, we can compute
    \[
    d d^c \theta = 2 i \xi_1 \xi_{\bar 1} (\theta) \phi^{1 \bar 1},
    \]
    showing that we have to quotient by $\mathcal{J}$.\\
    We now prove that $H^2_{d+d^c}$ contains an infinite dimensional subspace. Consider the subspace 
    \[
    \mathcal{S} = \{ h \phi^{2 \bar 2}: \xi_1(h) =0, \, \xi_{\bar 1}(h)=0 \}.
    \]
    By the expression of $H^2_{d+d^c}$, we have that $\mathcal{S} \subset H^2_{d+d^c}$, and $\mathcal{S}$ is infinite dimensional because it strictly contains the family of functions $\{ cos (2 \pi n y) \}_{n \in \N}$.
\end{proof}

\end{example}

\begin{example}[There is no bigraded splitting for Bott-Chern cohomology in degree $k=2$]\label{ex:no:decomposition}
    Let $(M,J)$ be the Kodaira-Thurston endowed with the almost K\"ahler structure of example \ref{ex:KT}. We know that 
    \[
        H^2_{d+d^c} = \H \oplus \frac{\mathcal{I}}{\mathcal{J}}.
    \]
    Moreover $\H \neq \{ 0 \}$ because it contains the $2$-forms $c (\phi^{12} + \phi^{\bar 1 \bar 2})$ for any $c \in \C$.\\
    We show that there is no bigraded decomposition by proving that $H^{2,0}_{d + d^c} = H^{0,2}_{d + d^c} = \{ 0 \}$. Let $ f \phi^{12} \in A^{2,0}$ be a $d$-closed $(2,0)$-form. In particular, it must be
    \[
    0 = \bar \mu (f \phi^{12}) = - f \phi^1 \wedge \bar \mu \phi^2 = \frac{1}{4} f \phi^{1 \bar 1 \bar 2},
    \] 
    so that $f =0$ and $H^{2,0}_{d + d^c} = \{ 0 \}$. By conjugation, we also have  $H^{0,2}_{d + d^c} = \{ 0 \}$.
\end{example}

\begin{example}[The number $h^1_{d + d^c}$ distinguishes between almost K\"ahler structures]\label{ex:sol3}
Let $Sol(3)$ be the only $3$-dimensional solvable, non-nilpotent Lie group. Let $M$ be the $4$-manifold obtained as a quotient of $Sol(3) \times \R$ by a co-compact lattice. A co-frame of left-invariant forms on $M$ is given by $\{ e^1, e^2, e^3, e^4 \}$ with structure equations
\[
d e^3 = -e^{13}, \quad de^4 = e^{14}.
\]
The manifold $M$ admits no complex structure (cf.\ \cite{Boc16}), but admits a symplectic structure
\[
\omega = e^{12} + e^{34}.
\]
For $t \in \R$ small enough, let $J_t$ be the family of $\omega$-compatible almost K\"ahler structures defined by the $(1,0)$-forms (see \cite[Section 6.1]{FT10})
\begin{align*}
    &\phi^1_t = e^1 + i \Big( \frac{1 + t^2}{1 -t^2} e^2 - \frac{2t}{1-t^2} e^4 \Big), \\
    &\phi^2_t = e^3 + i \Big( \frac{2t}{1 -t^2} e^2 + \frac{1 + t^2}{1-t^2} e^4 \Big).
\end{align*}
Direct computations show that real forms are expressed in terms of complex forms as
\[
e^1 = \frac{1}{2} ( \phi^1_t + \phi^{\bar 1}_t), \quad  e^3 = \frac{1}{2} ( \phi^2_t + \phi^{\bar 2}_t),
\]
and
\[
e^4 = - \frac{i}{2} \, \frac{(1 - t^2)(1+t^2)}{1 + 6 t^2 + t^4} \Big( \phi^2_t - \phi^{\bar 2}_t - \frac{2t}{1 + t^2} (\phi^1_t - \phi^{\bar 1}_t) \Big).
\]
Consequently, the differentials of the $(1,0)$-co-frame are
\[
d \phi^1_t = - \frac{1}{2} \, \frac{t (1+t^2)}{1 + 6 t^2 + t^4} \Big( \phi^{12}_t - \phi^{1 \bar 2}_t + \phi^{\bar 1 2}_t -  \phi^{\bar 1 \bar 2}_t  + \frac{4t}{1+t^2}\phi^{1 \bar 1}_t \Big)
\]
and
{\small
\[
d \phi^2_t = - \frac{1}{4}  (\phi^{12}_t + \phi^{1 \bar 2}_t + \phi^{\bar 1 2}_t +  \phi^{\bar 1 \bar 2}_t) +
\frac{1}{4} \, \frac{(1 + t^2)^2}{1 + 6 t^2 + t^4} \Big( \phi^{12}_t - \phi^{1 \bar 2}_t + \phi^{\bar 1 2}_t -  \phi^{\bar 1 \bar 2}_t  + \frac{4t}{1+t^2}\phi^{1 \bar 1}_t \Big). 
\]
}
\end{example}
Recall that $h^1_{d + d^c} (J_t) = 2 \dim_\C (\ker d \cap A^{1,0}_t)$. Let
\[
\alpha = f  \phi^1_t + g  \phi^2_t
\]
be a $d$-closed $(1,0)$-form. In particular, we have that 
\[
0 = \bar \mu \alpha =  \Big( \frac{1}{2} \, \frac{t (1+t^2)}{1 + 6 t^2 + t^4}f - \frac{1}{4} \, g \Big(1 + \frac{(1 + t^2)^2}{1 + 6 t^2 + t^4} \Big) \Big) \phi^{\bar 1\bar 2}_t, 
\]
implying that
\begin{equation}\label{eq:g:multiple}
   g= c_t f, \quad \text{with} \,  c_t = \frac{t (1+t^2)}{1 + 4 t^2 + t^4}.
\end{equation}
Taking the coefficients of $d \alpha$ corresponding to $\phi^{\bar 1 2}_t$ and $\phi^{1 \bar 1}_t$, we get the following equations
\begin{equation}\label{eq:absurd}
    \begin{cases}
        \xi_{\bar 1} (g) - \frac{1}{2} \frac{t (1+t^2)}{1 + 6 t^2 + t^4}f - \frac{1}{4}g \Big(1 - \frac{(1 + t^2)^2}{1 + 6 t^2 + t^4} \Big) =0, \\
        - \xi_{\bar 1}(f) - \frac{2 t^2}{1 + 6 t^2 + t^4}f + \frac{t(1+t^2)}{1 + 6 t^2 + t^4}g =0.
    \end{cases}
\end{equation}
Combining \eqref{eq:g:multiple} and \eqref{eq:absurd}, we deduce that:
\begin{itemize}
    \item for $t \neq 0$ small enough, it must be $f = g =0$. Thus $\alpha =0$ and $h^1_{d + d^c} (J_t) = 0$;

    \item for $t=0$, we have $g =0$. Using the equation $d \alpha =0$ it is not hard to see that $f$ must be constant, $\alpha = f \phi^1_0$ and $h^1_{d + d^c} (J_t) = 2$.
\end{itemize}
As a consequence, we have that $h^1_{d + d^c}$ can be used to distinguish between almost complex structures compatible with the same symplectic form (see Proposition \ref{prop:symplectic}).

{\small
\printbibliography
}

\end{document}